\documentclass[11pt, reqno]{amsart}

\usepackage{amsmath,amsfonts,amssymb,mathrsfs}
\usepackage{mathrsfs,extpfeil}
\usepackage{indentfirst, latexsym, enumerate}

\usepackage{graphicx,epsfig,subcaption,float}

\usepackage{colortbl,bm}
\newcommand{\bbr}{\mathbb R}

\newcommand{\bbz}{\mathbb Z}

\usepackage{hyperref}

\usepackage{cite}
\usepackage{upgreek}
\hypersetup{
	hyperindex=true
}
\hypersetup{hidelinks}
\title[Nonlocal Kuramoto model with non-integrable kernels]{Relaxation dynamics of the continuum Kuramoto model with non-integrable kernels}

\author[L. Chen]{Li Chen}
\address[Li Chen]{\newline School of Business Informatics and Mathematics
\newline University of Mannheim, Mannheim 68159, Germany}
\email{li.chen@uni-mannheim.de}

\author[S.-Y. Ha]{Seung-Yeal Ha}
\address[Seung-Yeal Ha]{\newline Department of Mathematical Sciences and Research Institute of Mathematics \newline  Seoul National University, Seoul 08826, Republic of Korea}
\email{syha@snu.ac.kr}

\author[X. Wang]{Xinyu Wang$^*$}
\address[Xinyu Wang]{\newline Department of Mathematical Sciences \newline  Seoul National University, Seoul 08826, Republic of Korea\newline School of Mathematics \newline Harbin Institute of Technology, Harbin  150001, People's Republic of China}
\email{wangxinyu97@snu.ac.kr}

\author[V. Zhidkova]{Valeriia Zhidkova}
\address[Valeriia Zhidkova]{\newline School of Business Informatics and Mathematics
\newline University of Mannheim, Mannheim 68159, Germany}
\email{valeriia.zhidkova@uni-mannheim.de}

\newtheorem{theorem}{Theorem}[section]
\newtheorem{lemma}{Lemma}[section]

\newtheorem{proposition}{Proposition}[section]

\newtheorem{remark}{Remark}[section]

\newtheorem{definition}{Definition}[section]

\def\charf {\mbox{{\text 1}\kern-.24em {\text l}}}

\newcommand*\di{\mathop{}\!\mathrm{d}}
\allowdisplaybreaks[4]

\begin{document}


\subjclass{35A01, 35B40, 37L05} \keywords{Kuramoto oscillators, non-integrable kernel, nonlocal interaction}

\thanks{\textbf{Acknowledgment.}
The work of S.-Y. Ha is supported by National Research Foundation(NRF) grant funded by the Korea government(MIST) (RS-2025-00514472), and the work of X. Wang is  partially supported by the Natural Science Foundation of China (grants 123B2003), the China Postdoctoral Science Foundation (grants 2025M774290), and Heilongjiang Province Postdoctoral Funding (grants  LBH-Z24167). Li Chen’s work is partially supported by Deutsche Forschungsgemeinschaft (DFG, German Research Foundation – 547277619) and the National Natural Science Foundation of China (12171218). The work of V. Zhidkova is partially supported by Deutsche Forschungsgemeinschaft (DFG, German Research Foundation – 547277619).}
\thanks{* Corresponding author.}

\begin{abstract}
We study the asymptotic behavior of the continuum Kuramoto model with a fractional Laplacian-type  kernel. For this, we construct global weak solutions via a two-parameter regularization procedure using a kernel truncation with fractional dissipation. Using a priori uniform estimates derived in fractional Sobolev spaces, we employ compactness arguments to construct  global weak solutions to the singular continuum Kuramoto model. Furthermore, we also establish an exponential relaxation toward the initial phase average in $L^2$-norm under suitable assumptions on initial data and system parameters. These findings provide a rigorous characterization of the existence of solutions and the emergent dynamics of Kuramoto ensembles under physically important strongly singular interactions, including power-law singular kernels and Coulomb-type  kernels.
\end{abstract}

\maketitle

%
%
%
%
%
%
\section{Introduction}\label{sec:1}
\setcounter{equation}{0}
Synchronization denotes phenomena in which weakly coupled oscillators adjust their rhythms, and it is one of the collective behaviors appearing in complex systems. To model such phenomena, Arthur Winfree and Yoshiki Kuramoto introduced phase-coupled models for limit-cycle oscillators in \cite{Wi, Kuramoto1975}. The phase model proposed by Kuramoto in 1975, is a canonical mathematical framework for studying synchronization phenomena in systems of coupled oscillators \cite{Kuramoto1975}. It describes the evolution of phase oscillators driven by intrinsic frequencies and mutual coupling interactions, and has become a fundamental model in nonlinear dynamics, statistical physics, and applied mathematics. Since its introduction, the Kuramoto model and its variants have been extensively studied in both finite-dimensional and continuum settings; see, for instance, \cite{A-B2005, A-B2019, B-P2012, D-B2014, HaKo2016, T-T1998, B-H2010} for comprehensive surveys. In this paper, we focus on the continuum Kuramoto model \cite{C-H-W-Z-2025, KoHa2025, H-K-P-R-S, Tr-1, Tr-2} which can be obtained from the lattice Kuramoto model in a continuum limit, and study the global well-posedness and long-time behaviors of the continuum Kuramoto model with nonlocal non-integrable singularity.

To set up the stage, we begin with a brief introduction of the lattice Kuramoto model. Let $\Omega$ be a bounded Lipschitz domain in $\bbr^d$ with a positive measure, and $\Gamma\subset\Omega$ be a uniform regular lattice with the index set $\Lambda \subset \bbz^d$ and the same lattice spacing in each coordinate. Since $\Omega$ is compact and $\Gamma$ is discrete, the index set $\Lambda$ is a finite set.  Let $\theta_\alpha = \theta_\alpha(t)$ and $\nu_\alpha$ be the phase and natural frequency of the internal (Kuramoto) oscillator at the lattice point $x_\alpha \in \Gamma$ and at time $t$. We assume that the communication weight between oscillators located at $x_{\alpha}$ and $x_{\beta}$ is given by the nonnegative real value $\psi_{\alpha \beta}$:
\[ \psi_{\alpha \beta} = \psi(|x_\alpha - x_\beta|), \quad \alpha, \beta \in \Lambda.\]
Here, $\psi: = \psi(\cdot, \cdot)~$ is a nonnegative kernel function. In this setting, the dynamics of $\theta_{\alpha}$ is governed by the Cauchy problem to the lattice Kuramoto model with network topology $(\psi_{\alpha \beta})$:
\begin{equation} \label{A-1}
	\begin{cases}
		\displaystyle \dot{\theta}_\alpha = \nu_\alpha + \frac{\kappa}{|\Lambda|} \sum_{\beta\in\Lambda} \psi_{\alpha \beta} \sin\big(\theta_{\beta} - \theta_\alpha \big), \quad t>0,\\
		\displaystyle \theta_\alpha \Big|_{t = 0}  = \theta_\alpha^{\mathrm{in}}, \quad\alpha \in\Lambda, 
		\end{cases}
\end{equation}
where $\kappa$ is the nonnegative coupling strength. The emergent dynamics of the Cauchy problem \eqref{A-1} has been extensively studied in literature; e.g., \cite{C-S2023, D-B2012, D-B2011, HaKim2016,D-X,H-L-X} and survey articles \cite{Erm-1985, H-R, D-B2014, A-B2005}. 

Note that as the number of oscillators tends to infinity, it is a fundamental problem in statistical physics and applied analysis to derive and analyze the corresponding limiting dynamics. As far as the authors know, there are three main approaches to describing the emergent behavior of infinite particle systems. The first approach concerns all-to-all interaction networks, where each agent interacts with every other agent. In this mean-field coupling regime, the limiting dynamics is described by the Vlasov-McKean equation in the mean-field limit \cite{C-C-H-K-K,Poyato2019, B-C-M, w6, w9, Lucon2014}. The second approach deals with sparse or non-complete coupling structures in densely connected networks, where the dynamics can be approximated by integro-differential equations through graph limit \cite{ChoHa2023, B-D2022, KoHa2025, C-H-W-Z-2025}. The third approach studies infinite systems of ordinary differential equations posed directly on infinite graphs, without passing to kinetic or continuum limits \cite{Bra-2019, w1, w5,w11,J1}. In this work, we adopt the second approximation methodology and focus on the continuum Kuramoto model with a nonlocal non-integrable kernel, given by
\begin{equation}\label{A-2}
	\begin{cases}
		\partial_t \theta(t,x)
		= \nu(x) + \kappa\, \mathrm{P.V.}\displaystyle\int_{\Omega} 
		\psi(x,y)\,\sin\big(\theta(t,y)-\theta(t,x)\big) \di y, 
		\quad t>0,\ x\in\Omega,\\[3ex]
		\theta \Big|_{t = 0}=\theta^{\mathrm{in}}.
	\end{cases}
\end{equation}
Here $\psi:\Omega\times\Omega \setminus \lbrace (x,x)\in \Omega \times \Omega \rbrace \to[0,\infty)$ is a non-integrable interaction kernel:
\begin{equation*}
	\psi(x,y):=\frac{1}{|x-y|^{d+2s}}, \qquad 0<s<1.
\end{equation*}
The principal value integral is understood as follows.
\begin{align*}
\begin{aligned}
& \mathrm{P.V.}\int_{\Omega} \psi(x,y)\sin\big(\theta(t,y)-\theta(t,x)\big) \di y \\
& \hspace{2cm} :=\lim_{\rho\to0}\int_{\Omega\cap\{|y-x|>\rho\}}
	\psi(x,y)\sin\big(\theta(t,y)-\theta(t,x)\big) \di y,
\end{aligned}
\end{align*}
as long as the right-hand side exists. If there is no confusion, we may drop P.V. in what follows. Throughout this paper, we consider a bounded Lipschitz domain $\Omega \subset \mathbb{R}^d, d\ge 1$.

While Kuramoto models with singular phase interactions have been extensively studied, see for instance \cite{P-P-J2021,Poyato2019,C-H-W-Z-2025}, where finite-time synchronization was established, non-integrable kernels have so far only been investigated in the linear framework, notably in the theory of the fractional Laplacian \cite{C-R-Sire2010,C-L-L2017,C-K-S-2010,R-S-2014}. To the best of our knowledge, the nonlinear Kuramoto model with nonlocal non-integrable spatial interactions has not been studied in literature.  Thus, we address the following two questions for \eqref{A-2}: 
\vspace{0.2cm}
\begin{itemize}
	\item[(Q1)]:~Under what conditions can we establish the existence of weak solutions to \eqref{A-2}?
	\vspace{0.1cm}
	\item[(Q2)]:~If such weak solutions exist, can we show the emergence of collective behaviors for \eqref{A-2}?
\end{itemize}
\vspace{0.2cm}

The main purpose of this paper is to answer the aforementioned questions. For (Q1), we recall the concept of weak solution to \eqref{A-2}. 
\begin{definition}\label{D1.1}
\begin{enumerate}
\item
	We say that $\theta$ is a weak solution to \eqref{A-2} if for every test function
	$\varphi\in C_c^\infty([0,T)\times\Omega)$, it holds
	\begin{align*}
		&\int_0^T\!\!\int_\Omega \theta(t,x)\,\partial_t\varphi(t,x) \di x\di t
		+ \int_\Omega \theta^{\rm in}(x)\varphi(0,x) \di x \notag\\
		&\hspace{0.5cm} + \int_0^T \int_\Omega \nu(x) \varphi (t,x) \di x \di t + \kappa\int_0^T\!\!\iint_{\Omega\times \Omega}
		\frac{\sin(\theta(t,y)-\theta(t,x))}{|x-y|^{d+2s}}
		\,\varphi(t,x) \di y\di x\di t = 0.
	\end{align*}
\vspace{0.1cm}	
\item
(Essential diameter): Let $\Omega\subset\mathbb R^d$ be a measurable set and let $f:\Omega\to\mathbb R$ be a measurable function. 
	The \emph{essential diameter} of $f$ on $\Omega$ is defined by
	\begin{equation*}
		{\mathcal D}[f] := \operatorname*{ess\,sup}_{x\in \Omega} f- \operatorname*{ess\,inf}_{x\in \Omega}f.
	\end{equation*}
\end{enumerate}
\end{definition}
\noindent Note that the second term on the right-hand side of \eqref{A-2} is no longer bounded due to the nonlocal singular effect. Therefore, we cannot directly obtain the existence of solutions by the standard Cauchy--Lipschitz theory. Moreover, we cannot control the phase diameter functional using the existing  methods in \cite{C-H-W-Z-2025,HaKim2016,KoHa2025}. To overcome these difficulties, we consider a suitably designed double regularized equation. Specifically, we replace the kernel $\psi$ by its truncated version $\psi_\varepsilon (x,y) = (|x-y|+\varepsilon)^{-d-2s}$ and incorporate a vanishing dissipative term associated with the regional fractional Laplacian. This construction yields a well-posed initial value problem \eqref{B-3-0}. For the solution of this Cauchy problem \eqref{B-3-0}, we are able to control the phase diameter and, in fact, obtain uniform bounds in $L^\infty$-norm. Importantly, these structural properties are preserved in the limit, as the regularization parameters vanish. This observation is crucial as it allows us to apply the energy method in combination with the Aubin-Lions compactness lemma to establish the existence of solutions. More precisely, we assume that initial data and frequency satisfy
\begin{align} \label{A-3}
\nu(x)\equiv\nu,~\forall ~x\in \Omega, \quad  \theta^{\mathrm{in}}\in L^\infty(\Omega)\cap H^s(\Omega), \quad s\in (0,1),
\end{align}
and
\begin{equation*}
	{\mathcal D}[\theta^{\mathrm{in}}]
	:=\operatorname*{ess\,sup}_{x\in \Omega} \theta^{\mathrm{in}} (x)
	-\operatorname*{ess\,inf}_{x\in \Omega} \theta^{\mathrm{in}}(x)
	<\pi.
\end{equation*}
Then, there exists at least one weak solution $\theta\in L^2([0,\infty);H^s(\Omega))\cap C([0,\infty);L^2(\Omega))$ to \eqref{A-2} such that 
\begin{equation}\label{A-4}
	{\mathcal D}[\theta(t)]\le {\mathcal D}[\theta^{\mathrm{in}}] <\pi,
	\qquad  t\ge0.
\end{equation}
For more details, we refer to Theorem~\ref{T2.1}.\newline

For (Q2), we study solutions satisfying the phase condition \eqref{A-4} obtained in Theorem~\ref{T2.1}. For this, we combine the monotonicity property of the function $\sin x/x$ on $[0,\pi)$ with the fractional Poincaré inequality to obtain exponential convergence to equilibrium in $L^2$-norm. Again, the control of the phase diameter is necessary since it provides the monotonicity of $\sin x /x$. More precisely, suppose that $\theta$ is a weak solution to \eqref{A-2} with initial data
\[
\theta^\mathrm{in} \in L^\infty(\Omega)\cap H^s(\Omega)
\quad \text{and} \quad 
{\mathcal D}[\theta^\mathrm{in}] = M <\pi.
\]
Then there exists a constant $C_P=C_P(\Omega,d,s)>0$ such that, for all $t\ge0$,
\begin{equation*}
	\|\theta(t,\cdot)-\bar{\theta}\|_{L^2(\Omega)}^2
	\le \exp\!\big(-\kappa\,c_M\,C_P\,t\big)\;
	\|\theta^{\rm in}-\bar{\theta}\|_{L^2(\Omega)}^2,
	\qquad
	c_M:=\frac{\sin M}{M}>0,
\end{equation*}
where $\bar{\theta}$ denotes the initial average phase, defined by
\begin{equation}\label{A-5}
	\bar{\theta}:=\frac1{|\Omega|}\int_\Omega \theta^{\rm in}(x) \di x.
\end{equation}
For more details, we refer to Theorem~\ref{T2.2}.\newline

The rest of this paper is organized as follows. In Section \ref{sec:2}, we study preparatory materials such as conservation of average phase, regional fractional Laplacian to be used in the analysis of \eqref{A-2}. After we review the related previous results for \eqref{A-2} with regular and moderately singular kernels, we present a brief description of our main results. In Section \ref{sec:3}, we  establish a global existence of a weak solution via a double regularization technique. In Section \ref{sec:4}, we show that an exponential relaxation of the phase function toward the initial phase average emerges. Finally, Section \ref{sec:5} is devoted to a brief summary of the main results and discussion of some remaining issues for a future work. In Appendix \ref{App-A}, we study the global well-posedness of the doubly regularized equation. In Appendix \ref{App-B}, we provide a detailed proof of Lemma \ref{L3.5}.

\vspace{0.5cm}

\noindent {\bf Gallery of Notation.}~Let $\Omega \subset \mathbb{R}^d$ be a bounded Lipschitz domain and let $s \in (0,1)$. The fractional Sobolev space $W^{s,2}(\Omega)$ or $H^s(\Omega)$, also known as the \textit{Sobolev–Slobodeckij space}, is defined as
\[
H^{s}(\Omega) := \left\{ u \in L^2(\Omega) \;\middle|\; \iint_{\Omega \times \Omega} \frac{|u(x) - u(y)|^2}{|x - y|^{d + 2s}} \di x \di y < \infty \right\}.
\]
The associated norm on $H^{s}(\Omega)$ is given by
\[
\|u\|_{H^{s}(\Omega)}^2 := \|u\|_{L^2(\Omega)}^2 + \|u\|_{\dot H^{s}(\Omega)}^2,
\]
where the Gagliardo seminorm  is defined by
\begin{equation} \label{A-6}
\|u\|_{\dot H^{s}(\Omega)}^2 := \iint_{\Omega \times \Omega} \frac{|u(x) - u(y)|^2}{|x - y|^{d + 2s}} \di x \di y.
\end{equation}
We refer the reader to \cite{DiNezza2012} for more information on Sobolev-Slobodeckij spaces. Throughout this paper, for a set $A$, we write $|A|$ to denote the Lebesgue measure of the set $A$. \newline

Throughout the paper, we consider a bounded Lipschitz domain $\Omega \subset \bbr^d$. This domain is an extension domain for $H^s$ (or any $W^{s,p}$ with $p\in [1,+\infty)$ and $s\in (0,1)$, in fact), i.e., we can extend any function $u\in H^s(\Omega)$ to $H^s(\bbr^d)$. This fact is crucial for the proof of the compact embedding of $H^s(\Omega)$ into $L^2(\Omega)$ and the continuous embedding of $L^2(\Omega)$ into the dual $(H^s(\Omega))^*$. We refer to Theorem 5.4 in \cite{DiNezza2012} for more details.

\vspace{0.5cm}

\section{Preliminaries}\label{sec:2}
\setcounter{equation}{0}
In this section, we first study the basic properties of \eqref{A-2}, and then, we recall the basic properties of the fractional Laplacian operator and Aubin--Lions lemma.  Moreover, we review previous results of \eqref{A-2} under regular and weakly singular kernels. Finally, we summarize our main results.
\subsection{Conservation of average phase}\label{sec:2.1}
Suppose that the conditions \eqref{A-3} hold. Then, we integrate \eqref{A-2} over $x\in\Omega$ using Fubini's theorem to obtain
\[
\frac{\di}{\di t}\int_\Omega \theta(t,x)\di x
= \nu |\Omega| + \kappa\iint_{\Omega\times \Omega} 
\frac{\sin(\theta(t,y)-\theta(t,x))}{|x-y|^{d+2s}}\di y \di x.
\]
Since the sine function is odd and the kernel $|x-y|^{-d-2s}$ is symmetric, the integrand is antisymmetric under the exchange $(x,y)\mapsto (y,x)$. 
Hence, the double integral vanishes:
\[
\iint_{\Omega\times \Omega }
\frac{\sin(\theta(t,y)-\theta(t,x))}{|x-y|^{d+2s}} \di y \di x
=0.
\]
Therefore, for all $t\ge0$, one has 
\[ \frac{\di}{\di t} \left(
\frac{1}{|\Omega|}\int_\Omega \theta(t,x) \di x  \right)
= \nu.
\]

Now, we introduce the new variable 
\[
\tilde{\theta}(t,x):=\theta(t,x)-\bar{\theta}-\nu t
\]
to find 
\[
\frac{\di}{\di t} \left( \frac{1}{|\Omega|}\int_\Omega \tilde{\theta}(t,x) \di x \right) = 0.
\]
Thus, we have
\begin{align}\label{B-1}
	\frac{1}{|\Omega|}\int_\Omega \tilde{\theta}(t,x) \di x
	= \frac{1}{|\Omega|}\int_\Omega \tilde{\theta}(0,x) \di x = 0.
\end{align}
Hence, without loss of generality, we assume that
\begin{align}\label{B-2}
	\nu \equiv 0
	\quad \text{and} \quad 
	\frac{1}{|\Omega|} \int_\Omega \theta(t,x) \di x =\bar{\theta}=0,
\end{align}
where $\bar{\theta}$ is defined in \eqref{A-5}.
\subsection{The regional fractional Laplacian}\label{sec:2.2}
In this part, we recall the basic properties of the regional fractional Laplacian. \newline

Let $\Omega\subset\mathbb R^d$ be a bounded domain with Lipschitz boundary and $s\in(0,1)$.  
	We define the symmetric bilinear form $\mathcal{A}$ on $H^s(\Omega) \subset L^2(\Omega)$ as follows
	\begin{align}\label{B-3}
	\mathcal {A}(u,v)
	:=\frac12\iint_{\Omega\times\Omega}
	\frac{(u(x)-u(y))(v(x)-v(y))}{|x-y|^{d+2s}} \di x \di y,
	\qquad u,v\in H^s(\Omega).
	\end{align}

The space $H^s(\Omega)$ becomes a Hilbert space \cite{DiNezza2012} with the norm
\begin{align*}
\|u\|_{H^s(\Omega)}^2 = \|u\|_{L^2(\Omega)} + \mathcal{A}(u,u),
\end{align*}
so the bilinear form $\mathcal{A}$ is closed. By Kato's theorem, there exists a unique nonnegative definite self-adjoint operator $A$ on $L^2(\Omega)$ defined on its domain $D(A) \subset H^s(\Omega)$. It holds
\begin{align*}
\langle Au,v \rangle_{L^2(\Omega)} = \mathcal{A}(u,v), \quad u\in  D(A), \, v\in H^s(\Omega).
\end{align*}
For any $1$-Lipschitz function $\eta: \bbr \rightarrow \bbr$ and any $u \in H^s(\Omega)$ it obviously holds $\eta(u) \in H^s(\Omega)$ and
\begin{align*}
\mathcal{A}(\eta(u), \eta(u)) \leq \mathcal{A}(u,u).
\end{align*}
Therefore, $\mathcal{A}$ is a Dirichlet form. Then, the nonpositive operator $-A$ generates a Markovian symmetric contraction semigroup $(T(t))_t = (e^{-tA})_t$ acting on $L^2(\Omega)$. In other words, it holds
	\begin{align}\label{NewL11}
	\| T(t) \|_{\mathcal{L} (L^2, L^2)} \leq 1 \quad \mathrm{for} \ \mathrm{all} \ t\geq 0,
	\end{align}
	and
		\begin{align}\label{NewL12}
		\| T(t) \|_{\mathcal{L} (L^\infty, L^\infty)} \leq 1 \quad \mathrm{for} \ \mathrm{all} \ t\geq 0. 
		\end{align}
	We refer to \cite{Fukushima2010} for more details on semigroups generated by Dirichlet forms. Moreover, for more knowledge related to regional fractional Laplacian, we refer to \cite{C-R-Sire2010,C-L-L2017,C-K-S-2010,R-S-2014}.\newline

Next, we recall the Aubin--Lions compactness lemma and fractional Poincaré inequality, respectively.
\begin{lemma}[Aubin--Lions compactness lemma \cite{Brezis2010}]\label{L2.1}
	Let $X \subset\subset B \subset Y$ be Banach spaces with compact embedding from $X$ into $B$ and continuous embedding from $B$ into $Y$. Let $\{u_n\}$ be a sequence such that
	\[
	u_n \in L^p([0,T); X) \quad\text{and}\quad  \partial_t u_n \in L^q([0,T); Y),
	\]
	and these are uniformly bounded in $n$. If $1\le p<\infty$, then $\{u_n\}$ is relatively compact in $L^p([0,T); B)$. If $p=\infty$ and $q>1$, then $\{u_n\}$ is relatively compact in $C([0,T); B)$.
\end{lemma}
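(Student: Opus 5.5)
The statement is the classical Aubin--Lions--Simon compactness lemma, and I would prove it in the standard two-step way. First I would establish an Ehrling-type interpolation inequality. Then I would combine it with time-equicontinuity in $Y$ coming from the bound on $\partial_t u_n$.

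\textbf{Step 1 (Ehrling's lemma).} I would show that for every $\eta>0$ there is $C_\eta$ with
\[
\|v\|_B\le \eta\|v\|_X + C_\eta\|v\|_Y \quad \text{for all } v\in X.
\]
The argument is by contradiction. If this fails, there are $v_k$ with $\|v_k\|_X=1$ and $\|v_k\|_B>\eta+k\|v_k\|_Y$. Compactness of $X\hookrightarrow B$ gives a subsequence $v_k\to v$ in $B$. Since $\|v_k\|_B$ is bounded, we get $\|v_k\|_Y\to0$, and hence $v=0$ by continuity of $B\hookrightarrow Y$. This contradicts $\|v\|_B\ge\eta$.

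\textbf{Step 2 (equicontinuity in $Y$).} Writing $u_n(t+h)-u_n(t)=\int_t^{t+h}\partial_t u_n\,\di\tau$ and applying Hölder gives
\[
\|u_n(t+h)-u_n(t)\|_Y\le h^{1-1/q}\|\partial_t u_n\|_{L^q(Y)},
\]
with the natural modification when $q=1$ (then one uses translation bounds in $L^1$). The sequence is thus uniformly equicontinuous in $Y$ when $q>1$, and has uniformly small time translates in $L^p(0,T-h;Y)$ in general.

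\textbf{Step 3 ($1\le p<\infty$).} I would verify the Kolmogorov--Riesz--Simon criterion in $L^p(0,T;B)$. There are two ingredients.
\begin{enumerate}
\item The time averages $\int_{t_1}^{t_2}u_n\,\di t$ are bounded in $X$, hence relatively compact in $B$.
\item The translates satisfy $\|\tau_hu_n-u_n\|_{L^p(0,T-h;B)}\to0$ uniformly in $n$.
\end{enumerate}
For the second, apply Step 1 pointwise in time:
\[
\|\tau_hu_n-u_n\|_{L^p(B)}\le 2\eta\sup_n\|u_n\|_{L^p(X)}+C_\eta\|\tau_hu_n-u_n\|_{L^p(Y)}.
\]
Choose $\eta$ small first, then $h$ small using Step 2. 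Simon's theorem then gives relative compactness. Alternatively, one can mollify in time: the mollified sequence is compact by Arzelà--Ascoli, and the mollification error is small by the same interpolation.

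\textbf{Step 4 ($p=\infty$, $q>1$).} For each fixed $t$, the set $\{u_n(t)\}$ is bounded in $X$ and hence relatively compact in $B$. Here I would take the continuous-in-$Y$ representative, note that the $X$-bound passes to every $t$ by weak-* lower semicontinuity, and assume $X$ is reflexive or argue via averages. Equicontinuity in $B$ follows from Step 1 and Step 2:
\[
\|u_n(t)-u_n(s)\|_B\le 2\eta M+C_\eta|t-s|^{1-1/q}.
\]
The vector-valued Arzelà--Ascoli theorem then yields relative compactness in $C([0,T];B)$.

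\textbf{Main obstacle.} The delicate point is Step 3's compactness criterion in Bochner spaces, namely proving Simon's characterization or the mollification argument cleanly. The endpoint $q=1$ is also delicate, since Step 2 only controls translates in $L^1$. In the paper itself, however, this lemma is quoted from \cite{Brezis2010}, so invoking that reference suffices.
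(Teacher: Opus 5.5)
Your argument is correct. The paper itself gives no proof: it quotes the lemma from \cite{Brezis2010} and uses it as a black box in the compactness steps of Section~\ref{sec:3}. What you sketch is essentially Simon's 1987 proof: an Ehrling-type interpolation, translation estimates in $Y$, Simon's characterization of compact sets in $L^p(0,T;B)$, and Arzel\`a--Ascoli for $p=\infty$. Compared with the bare citation, this reduces the black box to the $L^p(0,T;B)$ compactness criterion alone. It also shows where each hypothesis enters: injectivity of $B\hookrightarrow Y$ is used in the Ehrling step, and $q>1$ is used only through the H\"older modulus $|t-s|^{1-1/q}$ in the $C([0,T];B)$ case.

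For $p<\infty$ and $q=1$, you should state explicitly how you pass from $L^1$ translates to $L^p$ translates. Since $W^{1,1}(0,T;Y)\subset C([0,T];Y)$, the constant $C:=\sup_n\|u_n\|_{C([0,T];Y)}$ is finite. Hence $\|\tau_hu_n-u_n\|_{L^p(0,T-h;Y)}^p\le (2C)^{p-1}h\sup_n\|\partial_tu_n\|_{L^1(0,T;Y)}$, which is what you need.

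One point in Step~4 must be fixed. You may not assume $X$ is reflexive, since the statement does not, and you do not need to. Steps~1--2 give $\|u_n(t)-u_n(s)\|_B\le 2\eta M+C_\eta|t-s|^{1-1/q}$ for all $t,s$ in a full-measure (hence dense) set $E_n$ on which $\|u_n(\cdot)\|_X\le M$ and $u_n$ equals its $Y$-continuous representative. Therefore $u_n|_{E_n}$ extends to an element of $C([0,T];B)$ whose modulus of continuity is independent of $n$. Moreover, each value $u_n(t)$ is a $B$-limit of points of the ball $\{\|v\|_X\le M\}$, so it lies in the $B$-closure of that ball, which is compact in $B$. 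Arzel\`a--Ascoli then applies without any pointwise $X$-bound. In the paper's application $X=H^s(\Omega)$ is a Hilbert space, so your reflexive variant would also suffice there.
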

\begin{lemma}[Fractional Poincaré inequality \cite{H-V-2013}]\label{L2.2}
	Let $\Omega\subset\mathbb R^d, \, d\ge 1$ be a bounded domain and $s\in(0,1)$.  
	Then there exists a constant $C_P=C_P(\Omega,d,s)>0$ such that for all 
	$u\in H^s(\Omega)$ with zero mean, i.e., $\int_\Omega u(x) \di x=0$, it holds
	\[
	\|u\|_{L^2(\Omega)}^2 
	\le C_P \iint_{\Omega\times\Omega}
	\frac{|u(x)-u(y)|^2}{|x-y|^{d+2s}}~\di x\di y.
	\]
\end{lemma}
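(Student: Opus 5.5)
The constant can be written down explicitly, so I would prove the inequality directly rather than by a compactness--contradiction argument; neither the Lipschitz regularity of $\partial\Omega$ nor the compact embedding $H^s(\Omega)\subset\subset L^2(\Omega)$ is needed. Only two facts are used: $\Omega$ has finite positive measure, and finite diameter $D:=\operatorname{diam}\Omega<\infty$.

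\emph{Step 1: represent $u$ through its differences.} Since $\int_\Omega u(y)\di y=0$, for a.e. $x\in\Omega$ we have
\[
u(x)=\frac{1}{|\Omega|}\int_\Omega \big(u(x)-u(y)\big)\di y .
\]
Applying the Cauchy--Schwarz (or Jensen) inequality with respect to the probability measure $|\Omega|^{-1}\di y$ gives
\[
|u(x)|^2\le \frac{1}{|\Omega|}\int_\Omega |u(x)-u(y)|^2\di y .
\]
Integrating in $x$ and using Tonelli's theorem (the integrand is nonnegative) yields
\[
\|u\|_{L^2(\Omega)}^2\le \frac{1}{|\Omega|}\iint_{\Omega\times\Omega}|u(x)-u(y)|^2\di x\di y .
\]

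\emph{Step 2: insert the singular weight.} For $x,y\in\Omega$ we have $|x-y|\le D$, hence $1\le D^{d+2s}\,|x-y|^{-d-2s}$ for $x\neq y$; the diagonal is a null set in $\Omega\times\Omega$. Therefore
\[
\|u\|_{L^2(\Omega)}^2\le \frac{D^{d+2s}}{|\Omega|}\iint_{\Omega\times\Omega}\frac{|u(x)-u(y)|^2}{|x-y|^{d+2s}}\di x\di y ,
\]
which is the claim with $C_P:=D^{d+2s}/|\Omega|$. This constant depends only on $\Omega$, $d$ and $s$.

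\emph{Main obstacle.} There is essentially none. The only point needing care is justifying Step 1 for a general $u\in H^s(\Omega)$ rather than a smooth one. Since $u\in L^2(\Omega)\subset L^1(\Omega)$ and $\Omega$ is bounded, the pointwise identity and the Tonelli manipulations are valid for a.e.\ $x$ directly, so no density argument is required. If the right-hand side is infinite the inequality is trivial, so we may assume it is finite. The estimate is not sharp, but sharpness is not needed for its later use in Theorem~\ref{T2.2}.
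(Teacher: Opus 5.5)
Your proof is correct and is essentially the paper's argument: you use the zero mean to write $u(x)=|\Omega|^{-1}\int_\Omega (u(x)-u(y))\,\di y$, apply Cauchy--Schwarz and integrate, and then insert the weight via $1\le D^{d+2s}|x-y|^{-d-2s}$ on $\Omega\times\Omega$. The only difference is cosmetic: the paper uses $\max\{\mathrm{diam}\,\Omega,1\}$ where you use $D=\mathrm{diam}\,\Omega$. The maximum with $1$ is unnecessary, so your constant $C_P=D^{d+2s}/|\Omega|$ is valid and never larger than the paper's.
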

\begin{proof}
	Since in reference \cite{H-V-2013} they required dimension $d\ge2$, we provide a proof for any $d\ge1$ to fix setting in this paper. For every $x\in\Omega$, it holds
	\begin{align*}
		u(x) = \frac{1}{|\Omega|} \int_\Omega (u(x) - u(y)) \di y,
	\end{align*}
	since we have assumed a zero mean of $u$. Taking the square and using H\"older's inequality, we obtain
	\begin{align*}
		|u(x)|^2 = \left| \frac{1}{|\Omega|} \int_\Omega (u(x) - u(y)) \di y \right|^2 \leq \frac{1}{|\Omega|} \int_\Omega |u(x) - u(y)|^2 \di y.
	\end{align*}
	Integrating over $\Omega$, we get
	\begin{align} \label{ineq1}
		\int_\Omega |u(x)|^2 \di x\leq \frac{1}{|\Omega|} \iint_{\Omega \times \Omega} |u(x) - u(y)|^2 \di y \di x.
	\end{align}
	Now, using boundedness of $\Omega$, there exists a constant $C\ge 1$ such that
	\begin{align*}
		|x-y| \leq C \quad \forall ~x,y\in \Omega.
	\end{align*}
	For example, a possible choice is $C(\Omega) = \max\lbrace \mathrm{diam} \, \Omega, 1\rbrace$. With that, it follows
	\begin{align*}
		C^{d+2s}|x-y|^{-d-2s} \ge 1, \quad \forall ~x,y \in \Omega.
	\end{align*}
	And, by multiplying this inequality with a nonnegative term $|u(x)-u(y)|^2$, we easily obtain
	\begin{align*}
		|u(x)-u(y)|^2 \leq C^{d+2s}\frac{|u(x)-u(y)|^2 }{|x-y|^{d+2s}} \quad \mathrm{for} ~ \mathrm{a.e.} ~x,y\in \Omega. 
	\end{align*}
	Using this together with \eqref{ineq1} yields the claim:
	\begin{align*}
		\int_\Omega |u(x)|^2 \di x\leq \frac{1}{|\Omega|} \iint_{\Omega \times \Omega} |u(x) - u(y)|^2 \di y \di x \leq \frac{C^{d+2s}}{|\Omega|} \iint_{\Omega \times \Omega} \frac{|u(x)-u(y)|^2 }{|x-y|^{d+2s}} \di y \di x.
	\end{align*}
\end{proof}
\subsection{Previous results} \label{sec:2.3}
In this subsection, we briefly summarize the previous results on the emergent dynamics of the continuum Kuramoto model  \eqref{A-2} with regular and weakly singular interaction kernels. For $\Omega=[0,1],$  if the interaction kernel $\psi$ is bounded and measurable on $[0,1] \times [0, 1]$, then the continuum Kuramoto model \eqref{A-2} can be rigorously derived as the graph limit of the Kuramoto model on dense graphs (see \cite{Med,Medvedev2014}). \newline

Next, we recall two emergent dynamics of \eqref{A-2} on the bounded domain $\Omega = [0, 1]$ and unbounded domain $\Omega = {\mathbb R}^d$, respectively, without proofs. Suppose that the smooth natural frequency function $\nu \in C^1([0,1])$ satisfies the zero sum condition:
\begin{equation} \label{B-3-111}
\| \nu \|_{L^\infty(\Omega)} := \max_{x \in [0,1]} |\nu(x)| > 0 \quad \mbox{and} \quad  \int_0^1 \nu(x)\,\mathrm{d}x = 0.
\end{equation}
For such $\nu$ satisfying \eqref{B-3-111}, we define
\begin{equation}\label{41}
\begin{cases}
\displaystyle \Delta(x) := \frac{\nu(x)}{ \| \nu \|_{L^\infty(\Omega)}} \in [-1,1], \quad 
	r_\ast := \int_0^1 \sqrt{1 - \Delta^2(x)}\,\mathrm{d}x > 0, \\
\displaystyle \mathcal{D}[\nu] := \max_{x,y \in [0,1]} |\nu(x) - \nu(y)|,
	\quad 
	\lambda := \max_{x \in [0,1]} \left|\frac{\partial \nu(x)}{\partial x}\right|.
\end{cases}
\end{equation}
For $ \alpha > \max \left\{ D[\nu],\, \frac{ \| \nu \|_{L^\infty(\Omega)}  }{r_\ast}  \right\}$, let $D_1^\ast < D_2^\ast$ be two roots to the following trigonometric equation:
\begin{equation}\label{34}
	\sin x = \frac{\mathcal{D}[\nu]}{\alpha}, \quad  x \in (0, \pi).
\end{equation}
Then, it is easy to see that 
\[ 0 < D_1^\ast < \frac{\pi}{2} < D_2^\ast < \pi.  \]
\begin{proposition} 
\emph{\cite{L-L-X}} Suppose that natural frequency satisfies \eqref{B-3-111} and \eqref{41}. Assume that domain,  kernel, and initial datum $\theta^{\mathrm{in}} \in C^1([0,1])$ satisfy
\[ \Omega=[0,1],   \quad \psi\equiv 1, \quad \mathcal{D}[\theta^{\mathrm{in}}] \le D_2^\ast,  \]
and let $\theta = \theta(t,x)$ be a global  solution to \eqref{A-2}. Then, there exists an equilibrium $\theta^\ast$ such that 
\[ \lim_{t \to \infty} \| \theta(t) -  \theta^{\ast} \|_{L^{\infty}(\Omega)} = 0. \]
\end{proposition}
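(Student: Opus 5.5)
This is the result of \cite{L-L-X}, quoted from the literature; the paper states it without proof. The argument I would reconstruct is the standard two-stage strategy for the all-to-all continuum Kuramoto model with $\psi\equiv1$. In that case the equation reads
\[
\partial_t\theta(t,x)=\nu(x)+\kappa\int_0^1\sin(\theta(t,y)-\theta(t,x))\,\mathrm{d}y .
\]
I take the coupling strength $\alpha$ in \eqref{34} to be $\kappa$.

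\textbf{Step 1: trap the phase diameter.} Let $\theta_M(t)$ and $\theta_m(t)$ be the spatial maximum and minimum of $\theta(t,\cdot)$. These are Lipschitz in $t$, since $\theta\in C^1$. Let $\mathcal D(\theta(t))=\theta_M-\theta_m$. If $\mathcal D(\theta(t))<\pi$, then every $\theta(t,y)$ lies in $[\theta_m,\theta_M]$. Writing $D=\mathcal D(\theta(t))$ and subtracting the equations at the extremal points, a Dini-derivative computation gives
\[
\frac{\mathrm{d}^+}{\mathrm{d}t}\mathcal D(\theta(t))\le \mathcal D[\nu]-\kappa\sin D .
\]
This uses $\sin(a)+\sin(b)\ge \sin(a+b)$ for $a,b\ge0$ with $a+b\le\pi$, applied to $a=\theta_M-\theta(y)$ and $b=\theta(y)-\theta_m$. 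The right-hand side is negative for $D\in(D_1^\ast,D_2^\ast)$. Hence the interval $[0,D_2^\ast]$ is positively invariant; the endpoint $D=D_2^\ast$ needs a small perturbation argument. Moreover $\mathcal D(\theta(t))$ enters $[0,D_1^\ast+\epsilon]$ in finite time. So after a transient, all phases stay in an arc of length less than $\pi/2$.

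\textbf{Step 2: monotone convergence of velocities.} Set $\omega=\partial_t\theta$. Differentiating the equation gives
\[
\partial_t\omega(t,x)=\kappa\int_0^1\cos(\theta(t,y)-\theta(t,x))\,(\omega(t,y)-\omega(t,x))\,\mathrm{d}y ,
\]
and after Step 1 the cosine weights are bounded below by $\cos(D_1^\ast+\epsilon)>0$. The same max–min argument then gives $\mathcal D(\omega(t))\le Ce^{-ct}$. Since $\int_0^1\nu=0$, the mean of $\omega$ vanishes, so $\|\omega(t)\|_{L^\infty}\le Ce^{-ct}$. Consequently $\theta(t,\cdot)=\theta(t_0,\cdot)+\int_{t_0}^t\omega$ is Cauchy in $L^\infty$ and converges uniformly to some $\theta^\ast$. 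Passing to the limit in the equation shows $\theta^\ast$ is an equilibrium.

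\textbf{Main obstacle.} The difficulty lies in Step 1 when $\mathcal D[\theta^{\mathrm{in}}]$ is close to $D_2^\ast>\pi/2$. In that regime the cosine weights may be negative, so the velocity-alignment argument is not available until the diameter has decreased. The condition $\kappa>\|\nu\|_{L^\infty}/r_\ast$ together with $r_\ast$ appears to be what guarantees nonempty roots and the existence of a phase-locked state. Should the trapping at $D_2^\ast$ itself fail, I would fall back on a gradient-flow/Łojasiewicz argument: the potential
\[
-\int_0^1\nu\theta\,\mathrm{d}x-\frac{\kappa}{2}\iint\cos(\theta(y)-\theta(x))\,\mathrm{d}y\,\mathrm{d}x
\]
yields convergence via boundedness of $\theta$ modulo translation.
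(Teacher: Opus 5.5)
The paper gives no proof of this proposition; it is quoted from \cite{L-L-X} ``without proofs'', so there is no in-paper argument to compare against. Your two-stage route is the standard one and is sound. It traps the diameter via $\sin a+\sin b\ge\sin(a+b)$, then gets exponential alignment of $\omega=\partial_t\theta$ once the cosine weights are bounded below. It finishes with the zero mean of $\omega$ from $\int_0^1\nu=0$, a uniform Cauchy argument, and passage to the limit in the equation. It only uses $\kappa>\mathcal D[\nu]$. The hypothesis $\kappa>\|\nu\|_{L^\infty(\Omega)}/r_\ast$ is what makes the self-consistency equation for phase-locked states solvable. You do not need it, because your argument produces the equilibrium as the limit. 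So the speculation and the gradient-flow fallback in your last paragraph can be dropped.

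The one genuine gap is the endpoint case $\mathcal D[\theta^{\mathrm{in}}]=D_2^\ast$, which the statement explicitly allows. Set $f(D):=\mathcal D[\nu]-\kappa\sin D$. From $D^+\mathcal D\le f(\mathcal D)$, comparison only gives $\mathcal D(\theta(t))\le y(t)$, where $\dot y=f(y)$ and $y(0)=D_2^\ast$. Since $f(D_2^\ast)=0$, we have $y\equiv D_2^\ast$. So nothing in your inequality forces the diameter to leave $D_2^\ast$, let alone to enter $[0,D_1^\ast+\epsilon]$ in finite time. Perturbing the initial data does not obviously help either: the perturbed solutions' entry times need not be uniformly bounded, since they start near the unstable equilibrium $D_2^\ast$ of the comparison ODE.

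What you need is strictness. For $a,b>0$ and $a+b\le\pi$,
\[
\sin a+\sin b-\sin(a+b)=2\sin\tfrac{a+b}{2}\bigl(\cos\tfrac{a-b}{2}-\cos\tfrac{a+b}{2}\bigr)>0 .
\]
Since the vector field is globally Lipschitz on $C([0,1])$, $\theta(t,\cdot)$ stays continuous. If $\mathcal D(\theta(t))>0$, the intermediate value theorem makes $\{y:\theta_m(t)<\theta(t,y)<\theta_M(t)\}$ a nonempty open set. On that set the integrand inequality is strict, so the integral inequality is strict for every pair of extremal points. Hence $D^+\mathcal D(\theta(t))<f(\mathcal D(\theta(t)))$, and in particular $D^+\mathcal D<0$ whenever $\mathcal D=D_2^\ast$.

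It follows that $\mathcal D(\theta(t_1))<D_2^\ast$ for some $t_1>0$. Compare from $t_1$ with $\dot y=f(y)$, whose solutions starting in $[0,D_2^\ast)$ converge to $D_1^\ast$. This gives entry into $[0,D_1^\ast+\epsilon]$ in finite time, and the rest of your argument goes through.

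The same continuity in $x$ is also what guarantees that $\theta_M,\theta_m$ (and the extrema of $\omega$) are attained and that the Danskin formula for $D^+\theta_M$ applies. You should also make the bootstrap explicit. The inequality $\sin a+\sin b\ge\sin(a+b)$ needs $\mathcal D\le\pi$. So run the comparison on the maximal interval on which $\mathcal D<\pi$, and conclude by continuity that this interval is $[0,\infty)$.
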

\begin{remark}
The existence and uniqueness/multiplicity of phase locked solution for continuum Kuramoto model were studied in \cite{Erm-1985, Tr-1, Tr-2}.
\end{remark}

\begin{proposition}
\emph{\cite{KoHa2025}}
Suppose that domain,  initial datum and kernel satisfy 
	\begin{equation*}\label{4.15}
	\begin{cases}
	 \displaystyle	\Omega=\mathbb{R}^d,  \quad \mathcal{D} [\theta^{\mathrm{in}}] < \pi, \quad  \|\psi\|_{\infty,1} := \sup_{x \in \mathbb{R}^d} \int_{\mathbb{R}^d} |\psi(x,y)|~\di y< \infty, \\
	 \displaystyle \exists~\tilde{\psi} \in L^1(\mathbb{R}^d)~~\mbox{such that}~~0 < \tilde{\psi}(y) \le\frac{ \psi(x,y)}
		{\int_{\mathbb{R}^d} \psi(x,z)\,\di z}, \quad \forall~ x,y \in \mathbb{R}^d, \\
	 \displaystyle \|\psi\|_{-\infty,1} := \inf_{x \in \mathbb{R}^d} \int_{\mathbb{R}^d} |\psi(x,y)|~\di y > 0.
	\end{cases}
	\end{equation*}
	Then, $\mathcal{D}[\theta]$ decays to zero exponentially fast:
	\[
	\mathcal D[\theta(t)]\le D[\theta^{\mathrm{in}}] e^{-\tilde{\gamma} t}, \quad \forall~ t > 0,
	\]
	where $\tilde{\gamma}$ is a positive constant given by
	\[
	\tilde{\gamma} := \min \left\{ \frac{1}{2}, \frac{\|\psi\|_{-\infty,1}}{\pi} \right\}
	\frac{\|\tilde{\psi}\|_{L^1(\mathbb{R}^d)}}{2\|\tilde{\psi}\|_{L^1(\mathbb{R}^d)} + 4\pi}.
	\]

\end{proposition}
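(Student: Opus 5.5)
We argue by tracking the two extremal phases. As in Section~\ref{sec:2.1}, we may take $\nu\equiv0$ and, to match the form of $\tilde\gamma$, normalize $\kappa=1$. Set
\[
\theta_M(t):=\operatorname*{ess\,sup}_{x}\theta(t,x),\qquad \theta_m(t):=\operatorname*{ess\,inf}_{x}\theta(t,x),\qquad D(t):=\theta_M(t)-\theta_m(t)=\mathcal D[\theta(t)].
\]
Since $\|\psi\|_{\infty,1}<\infty$, the right-hand side of \eqref{A-2} is bounded by $\|\psi\|_{\infty,1}$ uniformly in $(t,x)$. Hence each $t\mapsto\theta(t,x)$ is uniformly Lipschitz, and so are $\theta_M$, $\theta_m$ and $D$. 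They are therefore differentiable for a.e.\ $t$, and it suffices to prove a differential inequality for $D$ at such times.

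\textbf{Step 1 (invariance of $D<\pi$).} At a differentiability time $t$ with $D(t)<\pi$, pick points $x_n$ with $\theta(t,x_n)\to\theta_M(t)$. For a.e.\ $y$ we have $\theta(t,y)-\theta(t,x_n)\in(-\pi,0]$ up to an error $o(1)$, so $\sin(\theta(t,y)-\theta(t,x_n))\le o(1)$.

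By the uniform Lipschitz bound in time, $\theta_M(t+h)-\theta_M(t)$ is controlled by $\sup_n$ of the increments at the points $x_n$, plus a uniformly small error. This should give $\dot\theta_M\le 0$, and symmetrically $\dot\theta_m\ge0$. Hence $D$ is nonincreasing, and in particular $D(t)\le D_0:=\mathcal D[\theta^{\rm in}]<\pi$ for all $t$.

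\textbf{Step 2 (quantitative decay).} Since $x\mapsto \sin x/x$ is decreasing on $(0,\pi)$, we have $\sin z\ge c_0 z$ for $z\in[0,D_0]$ with $c_0:=\sin D_0/D_0>0$.

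At a near-maximizer $x_n$, every difference $\theta(x_n)-\theta(y)$ lies in $[0,D_0]$ up to $o(1)$. The kernel hypothesis gives
\[
\psi(x,y)\ge \tilde\psi(y)\int\psi(x,z)\,\di z\ge \|\psi\|_{-\infty,1}\,\tilde\psi(y),
\]
so that
\[
\partial_t\theta(t,x_n)\le -c_0\int\psi(x_n,y)\big(\theta(x_n)-\theta(y)\big)\,\di y\le -c_0\|\psi\|_{-\infty,1}\int\tilde\psi(y)\big(\theta_M-\theta(y)\big)\,\di y+o(1).
\]
The symmetric estimate at a near-minimizer reads
\[
\partial_t\theta(t,\tilde x_n)\ge c_0\|\psi\|_{-\infty,1}\int\tilde\psi(y)\big(\theta(y)-\theta_m\big)\,\di y-o(1).
\]
Subtracting the two inequalities, the unknown profile $\theta(y)$ cancels exactly. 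We obtain
\[
\dot D\le -c_0\|\psi\|_{-\infty,1}\|\tilde\psi\|_{L^1}\,D,
\]
and Gr\"onwall's inequality gives exponential decay of $D$.

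\textbf{Step 3 (matching the stated rate).} To recover precisely the constant $\tilde\gamma$, which does not depend on $D_0$, I would replace the linear bound $\sin z\ge c_0z$ by a splitting argument on $\{y:\theta(y)\ge \tfrac{\theta_M+\theta_m}{2}\}$ and its complement. Let $\mu$ be the $\tilde\psi$-weight of the first set. One of the two extremal points sees a $\tilde\psi$-mass of at least $\min\{\mu,\|\tilde\psi\|_1-\mu\}$ at phase distance at least $D/2$. Using concavity of $\sin$ there, and balancing the two cases, should produce the factors $\min\{\tfrac12,\|\psi\|_{-\infty,1}/\pi\}$ and $\|\tilde\psi\|_1/(2\|\tilde\psi\|_1+4\pi)$.

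\textbf{Main obstacle.} I expect the main difficulty to be the rigorous differentiation of the essential supremum over an uncountable, unbounded domain. There may be no maximizer, and $\theta$ is only measurable in $x$. My first attempt would be to avoid pointwise maximizers: integrate the equation on $[t,t+h]$ at each fixed $x$, apply the Step 2 bounds there, and derive the inequality for $D(t+h)-D(t)$ directly before letting $h\to0$. If that fails, I would approximate by solutions with continuous data and pass to the limit.
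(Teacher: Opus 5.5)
The paper gives no proof of this proposition. It is quoted from \cite{KoHa2025} ``without proofs'', so I can only assess your argument on its own merits.

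\textbf{Steps 1--2 are essentially right.} The one repair is the one you anticipate. A fixed sequence of near-maximizers does not control $\theta_M(t+h)$. Instead, apply your pointwise bound, integrated over $[t,t+h]$, to every $x$ in the superlevel set $\{\theta(t,\cdot)\ge\theta_M(t)-Kh\}$, with $K$ a suitable multiple of $\|\psi\|_{\infty,1}$; points below that level cannot catch up within time $h$. This yields
$\theta_M(t+h)\le\theta_M(t)-h\,c_0\|\psi\|_{-\infty,1}\int\tilde\psi(y)\big(\theta_M(t)-\theta(t,y)\big)\di y+O(h^2)$,
together with the mirror bound for $\theta_m$. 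Your exact cancellation of the profile then applies; that cancellation is precisely what the $x$-independent weight $\tilde\psi$ is for. The result is $\mathcal D[\theta(t)]\le D_0e^{-\gamma_0t}$ with $\gamma_0=\frac{\sin D_0}{D_0}\|\psi\|_{-\infty,1}\|\tilde\psi\|_{L^1}$, for $\kappa=1$.

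\textbf{The gap is Step 3, and it cannot be closed.} It is a hope (``should produce'') rather than an argument. On $\{\theta(t,\cdot)\le\frac{\theta_M+\theta_m}{2}\}$ the gaps $\theta_M-\theta(y)$ lie in $[D/2,D]$. There $\sin$ is bounded below only by $\sin D$, which vanishes as $D\to\pi$, so concavity does not help.

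In fact no argument can give a rate independent of $D_0$ with prefactor one. Take $\psi(x,y)=g(y)$ with $g>0$ and $\int g=1$. Non-symmetric kernels are forced here anyway: a symmetric $\psi$ with $\psi(x,y)\ge\|\psi\|_{-\infty,1}\tilde\psi(y)$ would also satisfy $\psi(x,y)\ge\|\psi\|_{-\infty,1}\tilde\psi(x)$ for all $y$, hence $\int\psi(x,y)\di y=\infty$. For this kernel, $\|\psi\|_{\infty,1}=\|\psi\|_{-\infty,1}=1$, $\tilde\psi=g$, and $\tilde\gamma=1/(\pi(2+4\pi))$. Let $\theta^{\rm in}$ equal $D_0$ on a set $B$ with $\int_B g=\frac12$ and $0$ elsewhere. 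By uniqueness the solution stays two-valued and $\dot D=-\sin D$, so $D(t)=D_0-t\sin D_0+O(t^2)$. The claimed bound then forces $\sin D_0/D_0\ge\tilde\gamma$, which fails for $\pi-D_0<0.06$.

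So the statement as transcribed is false near $D_0=\pi$. What is true is your $D_0$-dependent rate $\gamma_0$, which is sharp at $t=0$ in this example. It implies the stated bound only when $\gamma_0\ge\tilde\gamma$, for instance whenever $\sin D_0/D_0\ge 1/(4\pi^2)$, since $\tilde\gamma\le\|\psi\|_{-\infty,1}\|\tilde\psi\|_{L^1}/(4\pi^2)$. You should stop at Step 2 and flag the discrepancy with the cited constant rather than try to reach it.
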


\subsection{Description of main results} \label{sec:2.4}
In this subsection, we summarize our existence and long-time results to \eqref{A-2}, respectively. 
\begin{theorem}\label{T2.1}
\emph{(A global existence of a weak solution)}
	Let $\Omega\subset\mathbb R^d, d\ge1$ be a bounded domain with Lipschitz boundary, $s\in(0,1)$ and $\kappa>0$.  Suppose that natural frequency and initial datum satisfy
	\[
	\nu(x)\equiv \nu,~~\forall~ x\in\Omega, \quad 
	\theta^{\mathrm{in}}\in L^\infty(\Omega)\cap H^s(\Omega), \quad 
	{\mathcal D}[\theta^{\mathrm{in}}]<\pi.
	\]
		Then, there exists at least one global weak solution $\theta\in L^2([0,\infty);H^s(\Omega))\cap C([0,\infty];L^2(\Omega))$
	to \eqref{A-2} such that 
	\begin{align*}
		\sup_{0 \leq t < \infty} {\mathcal D}[\theta(t)] \le {\mathcal D}[\theta^{\mathrm{in}}] <\pi.
	\end{align*}
\end{theorem}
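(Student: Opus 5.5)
By the reduction \eqref{B-2}, we may take $\nu\equiv 0$ and $\bar\theta=0$. The plan is to follow the double regularization announced in the introduction. For $\varepsilon,\delta>0$ we set $\psi_\varepsilon(x,y)=(|x-y|+\varepsilon)^{-d-2s}$ and consider
\[
\partial_t\theta^{\varepsilon,\delta}=-\delta A\theta^{\varepsilon,\delta}+\kappa\int_\Omega\psi_\varepsilon(x,y)\sin\big(\theta^{\varepsilon,\delta}(t,y)-\theta^{\varepsilon,\delta}(t,x)\big)\di y,\qquad \theta^{\varepsilon,\delta}(0)=\theta^{\rm in}.
\]
Here $A$ is the regional fractional Laplacian of Section \ref{sec:2.2}. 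Since $\psi_\varepsilon$ is bounded, the nonlinear term is globally Lipschitz on $L^2$ and on $L^\infty$. A mild-solution fixed point via the semigroup $T(t)=e^{-t\delta A}$, using \eqref{NewL11}--\eqref{NewL12}, therefore gives a unique global solution in $C([0,\infty);L^2)\cap L^\infty_{loc}(L^\infty)$. The $H^s$ regularity comes from the energy identity below.

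\textbf{Step 1 (diameter control).} We show that $\operatorname{ess\,sup}\theta^{\varepsilon,\delta}(t)$ is nonincreasing and $\operatorname{ess\,inf}$ is nondecreasing while $\mathcal D<\pi$. For the nonlocal coupling term this is the standard argument. At a (near-)maximum point $x$, every difference $\theta(y)-\theta(x)$ lies in $(-\pi,0]$, so the sine is nonpositive and the term pushes $\theta(x)$ down. For the dissipation, $-\delta A$ is Markovian, so it preserves order and $L^\infty$ bounds. To make the argument rigorous for $L^\infty$ functions, we test the equation against $(\theta-M_0)_+$ with $M_0=\operatorname{ess\,sup}\theta^{\rm in}$. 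The Dirichlet-form property gives $\mathcal A(\theta,(\theta-M_0)_+)\ge0$. In the coupling term, the integrand is nonpositive wherever $\theta(x)>M_0$, provided we stay in the regime $\theta(x)-\theta(y)<\pi$. A continuity/bootstrap argument in $t$ keeps us in this regime. The same is done for the infimum. This yields $\mathcal D[\theta^{\varepsilon,\delta}(t)]\le\mathcal D[\theta^{\rm in}]$ uniformly in $\varepsilon,\delta$. I expect this to be the main obstacle, since near the maximum the truncated kernel, the bootstrap on the diameter and the $\delta A$ term must all be handled simultaneously with only $L^\infty$ data.

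\textbf{Step 2 (uniform $H^s$ estimate).} We multiply the equation by $\theta$, integrate, and symmetrize. This gives
\[
\frac12\frac{\di}{\di t}\|\theta\|_{L^2}^2+\delta\mathcal A(\theta,\theta)=-\frac\kappa2\iint\psi_\varepsilon\,(\theta(x)-\theta(y))\sin(\theta(x)-\theta(y))\di x\di y .
\]
By Step 1, $|\theta(x)-\theta(y)|\le M<\pi$, so $z\sin z\ge c_M z^2$ with $c_M=\sin M/M$. Hence
\[
\|\theta(t)\|_{L^2}^2+\kappa c_M\int_0^t\!\!\iint\psi_\varepsilon|\theta(x)-\theta(y)|^2\le\|\theta^{\rm in}\|_{L^2}^2 .
\]
Since $\psi_\varepsilon\uparrow\psi$, we cannot directly bound the full seminorm uniformly. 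Two routes are available. The first is to use the $\delta\mathcal A$ term, but it degenerates as $\delta\to0$. The second, which I will take, is to estimate $\iint\psi_{\varepsilon}|\Delta\theta|^2$ uniformly and use Fatou in the limit. The a priori bound is then $\int_0^T\|\theta\|_{\dot H^s_{\varepsilon}}^2\le C$. To obtain compactness in $\varepsilon$, we use that $\psi_\varepsilon\ge c\,\psi$ on $\{|x-y|\ge\varepsilon\}$. The resulting uniform fractional-difference estimate, together with $L^\infty$ bounds, gives spatial compactness by a Kolmogorov–Riesz-type criterion. Alternatively, we first send $\delta\to0$ for fixed $\varepsilon$ and then $\varepsilon\to0$, controlling $\|\theta\|_{\dot H^s}$ via a Grönwall estimate at the level of $\theta^{\rm in}\in H^s$, by differentiating $\mathcal A(\theta,\theta)$ along the flow.

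\textbf{Step 3 (time derivative and passage to the limit).} We bound $\partial_t\theta$ in $L^2(0,T;(H^s)^*)$. The symmetrized weak form writes the coupling paired with $\varphi$ as
\[
\frac12\iint\psi_\varepsilon\sin(\theta(y)-\theta(x))(\varphi(x)-\varphi(y)).
\]
Using $|\sin z|\le|z|$ and Cauchy–Schwarz, this is bounded by $\|\theta\|_{\dot H^s}\|\varphi\|_{\dot H^s}$. Lemma \ref{L2.1} with $X=H^s\subset\subset L^2\subset (H^s)^*$ then gives strong convergence in $L^2(0,T;L^2)$ and in $C([0,T];(H^s)^*)$, and the $L^\infty$ bounds upgrade this to $C([0,T];L^2)$. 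We pass to the limit in the symmetrized nonlinear term: the factor $\sin(\cdots)/|x-y|^{d/2+s}$ converges strongly in $L^2$ by dominated convergence together with the uniform seminorm bound, and the factor $(\varphi(x)-\varphi(y))/|x-y|^{d/2+s}$ is a fixed $L^2$ function. The diameter bound passes to the limit by a.e. convergence. A diagonal argument over $T\to\infty$ finishes the proof.
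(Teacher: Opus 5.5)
Your regularization and Step 1 are the paper's (Lemma \ref{L3.1}). Run the sign argument on the symmetrized integral $\frac12\iint\psi_\varepsilon\sin(\theta(y)-\theta(x))\,(w(x)-w(y))$: for a fixed $x\in\{\theta>M_0\}$ the inner integral need not be nonpositive. The bootstrap also needs upper semicontinuity of $t\mapsto\mathcal D[\theta(t)]$, which $C([0,T];L^2)$ does not give. You can avoid it by replacing $\sin z$ with $\sin(\max\{-M',\min\{M',z\}\})$, where $\mathcal D[\theta^{\rm in}]<M'<\pi$. The truncation argument then holds for all $t$ with no bootstrap, and by uniqueness the modified solution is the solution of \eqref{C-2}.

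The real departure is Step 2. The paper tests with $\partial_t\theta$, using that \eqref{C-2} is the $L^2$-gradient flow of $\mathcal E_{P,\varepsilon}+\mathcal E_K$ (Lemma \ref{L3.2}). This gives $\sup_t\|\theta^{\varepsilon,\delta}(t)\|_{\dot H^s}^2\le\frac{\kappa+\delta}{\delta}\|\theta^{\rm in}\|_{\dot H^s}^2$ for the inner limit. Through $\mathcal E_P$ and the diameter bound, it then gives a $\delta$-uniform bound on $\sup_t\|\theta^\delta(t)\|_{\dot H^s}$ for the outer limit. You instead test with $\theta$ and use $z\sin z\ge c_Mz^2$, which is the computation of Section \ref{sec:4} done at the approximate level. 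That estimate can work.

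As written, however, your compactness in $\varepsilon$ does not close. The only $\varepsilon$-uniform bound you keep is on $\int_0^T\iint\psi_\varepsilon|\theta(x)-\theta(y)|^2$. So Step 3's use of Lemma \ref{L2.1} with $X=H^s(\Omega)$ has no uniform $L^2(0,T;H^s)$ bound behind it. The two substitutes you offer do not fill this:
\begin{itemize}
\item The Kolmogorov--Riesz route would need a space--time compactness theorem for families bounded only in $\varepsilon$-truncated seminorms. That is not Lemma \ref{L2.1}, and you do not supply it.
\item The Gr\"onwall estimate for $\mathcal A(\theta,\theta)$ has constants controlled by the $x$-Lipschitz constant of $\psi_\varepsilon$, because that is what makes $G_\varepsilon$ map $H^s$ into $H^s$. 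That constant blows up as $\varepsilon\to0$.
\end{itemize}
The missing bound is in your own identity, in the term $2\delta\int_0^t\mathcal A(\theta,\theta)\,\di\tau$ that you dropped. It gives $\delta\int_0^T\|\theta^{\varepsilon,\delta}\|_{\dot H^s}^2\le\|\theta^{\rm in}\|_{L^2}^2$, uniformly in $\varepsilon$ for fixed $\delta$. Its degeneracy as $\delta\to0$ does not matter, since it is needed only for the inner limit, which you take first as the paper does. With it, $\|\partial_t\theta^{\varepsilon,\delta}\|_{(H^s)^*}\le\frac{\delta}{2}\|\theta^{\varepsilon,\delta}\|_{\dot H^s}+\frac{\kappa}{2}\big(\iint\psi_\varepsilon|\theta^{\varepsilon,\delta}(x)-\theta^{\varepsilon,\delta}(y)|^2\big)^{1/2}$ is bounded in $L^2(0,T)$, and Lemma \ref{L2.1} applies. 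Fatou along an a.e.\ convergent subsequence then turns your truncated bound into $\kappa c_M\int_0^T\|\theta^\delta\|_{\dot H^s}^2\le\|\theta^{\rm in}\|_{L^2}^2$, uniformly in $\delta$, and Lemma \ref{L2.1} applies again.

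Two more claims need repair.
\begin{itemize}
\item With only $L^2$-in-time $H^s$ bounds, Lemma \ref{L2.1} gives compactness in $L^2(0,T;L^2)$. Uniform $L^\infty$ bounds do \emph{not} upgrade $C([0,T];(H^s)^*)$-convergence to $C([0,T];L^2)$, since bounded oscillations converge only weakly. You do not need the upgrade. Get $\theta\in C([0,T];L^2)$ from $\theta\in L^2(0,T;H^s)$ and $\partial_t\theta\in L^2(0,T;(H^s)^*)$ (Lions--Magenes). Get the diameter bound for every $t$ from a.e.\ convergence plus closedness of $\{m\le u\le M\}$ in $L^2$.
\item The factor $\psi_{\varepsilon_j}^{1/2}\sin(\theta^{\varepsilon_j,\delta}(y)-\theta^{\varepsilon_j,\delta}(x))$ has no dominating function, so you only get weak $L^2$ convergence, from the a.e.\ limit plus the uniform $L^2$ bound. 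That suffices, because it is paired with the strongly convergent factor $\psi_{\varepsilon_j}^{1/2}(\varphi(x)-\varphi(y))$.
\end{itemize}

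Once repaired, your route gains something over the paper's. It never uses $\theta^{\rm in}\in H^s$, only $\theta^{\rm in}\in L^\infty$ with $\mathcal D[\theta^{\rm in}]<\pi$. Its bound is also global in time, so together with Lemma \ref{L2.2} it gives the $L^2([0,\infty);H^s)$ regularity claimed in Theorem \ref{T2.1} for the reduced mean-zero problem. The paper's $\sup_t$ bounds do not give that. In exchange, the paper's route gives $L^\infty(0,T;H^s)$ and $\partial_t\theta\in L^2(0,T;L^2)$, and hence $C([0,T];L^2)$-convergence directly from the case $p=\infty$ of Lemma \ref{L2.1}.
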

\begin{proof} Since the proof is very lengthy, we leave its detailed proof in next section, and we instead sketch the main steps of the proof on the global well-posedness to Cauchy problem \eqref{A-2} with a non-integrable kernel as follows. \newline

\noindent$\bullet $ \textbf{Step A (Doubly regularized equation)}:~Consider the Cauchy problem for the following doubly regularized equation:
\begin{equation}
	\begin{cases} \label{B-3-0}
		\displaystyle \partial_t\theta^{\varepsilon,\delta}(t,x)
		+\delta\,\displaystyle\int_{\Omega}
		\frac{\theta^{\varepsilon,\delta}(t,x)-\theta^{\varepsilon,\delta}(t,y)}{|x-y|^{d+2s}}\di y \\
		\displaystyle \hspace{1cm} =\kappa\displaystyle\int_\Omega \psi_\varepsilon(x,y)\sin\big(\theta^{\varepsilon,\delta}(t,y)-\theta^{\varepsilon,\delta}(t,x)\big)\di y, \quad t > 0,~~x \in \Omega, 
		\\[2ex]
		\displaystyle \theta^{\varepsilon,\delta} \Big|_{t = 0} =\theta^{\mathrm{in}}, \\[2ex]
		\displaystyle \psi_\varepsilon (x,y) = (|x-y|+\varepsilon)^{-d-2s}.
	\end{cases}
\end{equation}
Then, we show that the solution $\theta^{\varepsilon,\delta}$ satisfies a contraction property:
\begin{equation*}
	{\mathcal D}[\theta^{\varepsilon,\delta}(t)]
	\le {\mathcal D}[\theta^{\mathrm{in}}]<\pi,
	\qquad t\ge0.
\end{equation*}
For more details, we refer to Lemma \ref{L3.1} and Proposition \ref{PA.1} in Appendix \ref{App-A}.
\vspace{0.2cm}

\noindent$\bullet $ \textbf{Step B (Energy identity for $\theta^{\varepsilon,\delta}$)}:~We define truncated potential and kinetic energy functionals:
\begin{align}
\begin{aligned} \label{B-3-1}
& \mathcal E_{P,\varepsilon}[\theta^{\varepsilon,\delta}]
:=\frac{\kappa}{2}\iint_{\Omega\times\Omega}
\psi_\varepsilon(x,y)\,\big(1-\cos(\theta^{\varepsilon,\delta}(x)-\theta^{\varepsilon,\delta}(y))\big)\di x \di y, \\
& \mathcal E_{K}[\theta^{\varepsilon,\delta}]
:=\frac{\delta}{4}\iint_{\Omega\times\Omega}
\psi(x,y)\,(\theta^{\varepsilon,\delta}(x)-\theta^{\varepsilon,\delta}(y))^2 \di x \di y, \\
& {\mathcal E}_\varepsilon[\theta^{\varepsilon,\delta}] :=  \mathcal E_{P,\varepsilon}[\theta^{\varepsilon,\delta}] +  \mathcal E_{K}[\theta^{\varepsilon,\delta}].
\end{aligned}
\end{align}
Then, the total energy functional satisfies the energy identity (Lemma  \ref{L3.2}):
\begin{equation*}
{\mathcal E}_\varepsilon[\theta^{\varepsilon,\delta}(t)] +\int_0^t \|\partial_\tau\theta^{\varepsilon,\delta}(\tau)\|_{L^2(\Omega)}^2 \di \tau
	= {\mathcal E}_\varepsilon[\theta^{\mathrm{in}}], \quad t\ge0.
\end{equation*}
\vspace{0.2cm}

\noindent$\bullet $ \textbf{Step C (Convergence of subsequence $\left\{\theta^{\varepsilon_j,\delta}\right\}$ in $\varepsilon \to 0$)}:~By Aubin--Lions compactness lemma and energy estimate in Step B, for any fixed $\delta>0$, we can choose some subsequence $\varepsilon_j \to 0$ such that $\theta^{\varepsilon_j,\delta}\to \theta^{\delta}$. Then the limit $\theta^{\delta}$ satisfies the following regularized equation:
\begin{equation*}
\begin{cases}
\displaystyle \partial_t\theta^\delta(t,x)
	+\delta\,\int_{\Omega}\frac{\theta^\delta(t,x)-\theta^\delta(t,y)}{|x-y|^{d+2s}} \di y
	=\kappa\int_\Omega \frac{\sin\big(\theta^\delta(t,y)-\theta^\delta(t,x)\big)}{|x-y|^{d+2s}} \di y,~~t > 0,~~x \in \Omega, \\[1em]
\displaystyle \theta^\delta(0)=\theta^{\mathrm{in}},
\end{cases}
\end{equation*}
and a contraction property:
\begin{equation}\label{B-4}
	{\mathcal D}[\theta^{\delta}(t)]
	\le {\mathcal D}[\theta^{\mathrm{in}}] <\pi,
	\qquad t\ge0.
\end{equation}
We refer to Lemma \ref{L3.3} and Proposition \ref{PA.2} in Appendix \ref{App-A} for details.
\vspace{0.2cm}

\noindent$\bullet $ \textbf{Step D (Energy dissipation for $\theta^\delta$)}:~We use potential energy  \[
\mathcal E_{P}[\theta^{\delta}]
:=\frac{\kappa}{2}\iint_{\Omega\times\Omega}
\psi(x,y)\,\big(1-\cos(\theta^\delta(x)-\theta^\delta(y))\big)\,\di x\,\di y
\]
and \eqref{B-4} to obtain uniform estimates for $\theta^{\delta}$. We refer to section \ref{sec:3.4} for details.
\vspace{0.2cm}

\noindent$\bullet $ \textbf{Step E (Convergence of subsequence $\left\{\theta^{\delta_j}\right\}$  in $\delta \to 0$):}~By Aubin--Lions compactness lemma we can choose some subsequence $\delta_j \to 0$ such that $\theta^{\delta_j}\to \theta$ and $\theta$ is the desired weak solution to \eqref{A-2} (see Lemma \ref{L3.4} and Lemma \ref{L3.5}).
\end{proof}
With the above preparations, we are ready to state our long-time behavior results for the solution of \eqref{A-2}.
\begin{theorem}\label{T2.2}
\emph{(Exponential relaxation)}
	Let $\Omega\subset\mathbb R^d, \, d\ge 1$ be a bounded Lipschitz domain and $s\in(0,1)$. Fix $\kappa>0$ and let
	$\theta$ be a global weak solution to \eqref{A-2} on $[0,\infty)$
	with initial data $\theta^{\rm in}\in L^\infty(\Omega)\cap H^s(\Omega)$ satisfying 
	\begin{equation*}
	{\mathcal D}[\theta^{\rm in}]=:M<\pi,\qquad \forall~t\ge0.
	\end{equation*}
	Then there exists a constant $C_P=C_P(\Omega,d,s)>0$ such that
	for all $t\ge0$,
	\begin{equation*}
		\|\theta(t)-\bar{\theta}\|_{L^2(\Omega)}^2
		\le \exp\!\Big(-\kappa\,c_M\,C_P\,t\Big)\;
		\|\theta^{\rm in}-\bar{\theta}\|_{L^2(\Omega)}^2,
		\qquad
		c_M:=\frac{\sin M}{M}>0,
	\end{equation*}
	where $\bar{\theta}$ is the initial average phase defined as in \eqref{A-5}.
\end{theorem}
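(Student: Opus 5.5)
We prove the estimate with an $L^2$-energy argument applied to the fluctuation $\theta-\bar\theta$, using the antisymmetry of the interaction, the diameter bound, and the fractional Poincaré inequality (Lemma \ref{L2.2}). By Section \ref{sec:2.1} we may assume $\nu\equiv0$ and $\bar\theta=0$. The mean of $\theta(t)$ is then conserved, so $\int_\Omega\theta(t,x)\di x=0$ for all $t$. Write $u=\theta$. Formally we test \eqref{A-2} against $u$:
\[
\frac12\frac{\di}{\di t}\|u\|_{L^2}^2=\kappa\iint_{\Omega\times\Omega}\frac{\sin(u(y)-u(x))}{|x-y|^{d+2s}}\,u(x)\di y\di x .
\]
Exchanging $x\leftrightarrow y$ and averaging the two expressions gives
\[
\frac12\frac{\di}{\di t}\|u\|_{L^2}^2=-\frac{\kappa}{2}\iint_{\Omega\times\Omega}\frac{\sin(u(x)-u(y))\,(u(x)-u(y))}{|x-y|^{d+2s}}\di x\di y .
\]

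We now use the diameter control. For the solution under consideration, ${\mathcal D}[\theta(t)]\le M<\pi$ for all $t$; this is the property furnished by Theorem \ref{T2.1}, and it is how we read the hypothesis "for all $t\ge0$". Hence $z=u(x)-u(y)$ satisfies $|z|\le M$ for a.e. $x,y$. The function $\sin z/z$ is even and decreasing on $[0,\pi)$, so
\[
\sin(z)\,z=\frac{\sin z}{z}\,z^2\ge c_M z^2 .
\]
It follows that
\[
\frac{\di}{\di t}\|u\|_{L^2}^2\le-\kappa c_M\|u\|_{\dot H^s}^2 .
\]
Since $u(t)$ has zero mean, Lemma \ref{L2.2} gives $\|u\|_{\dot H^s}^2\ge C_P^{-1}\|u\|_{L^2}^2$. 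We rename the constant so that it appears as in the statement, i.e. take the $C_P$ of the theorem to be the reciprocal of the constant in Lemma \ref{L2.2}. Gronwall's inequality then yields the claimed bound.

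The main obstacle is justifying the energy identity for a weak solution. Here $\partial_t\theta$ is only a distribution, and $u$ is not an admissible test function: it is not compactly supported in $\Omega$ and not smooth in time. We would handle this in one of two ways.

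\textbf{(a) Through the approximation.} Work with the approximations $\theta^{\varepsilon,\delta}$ and $\theta^\delta$ from the proof of Theorem \ref{T2.1}. For $\theta^\delta$, the same computation carries an extra dissipative term $-\delta\|u\|_{\dot H^s}^2\le0$, which only helps, so $\theta^\delta$ satisfies the inequality
\[
\|\theta^\delta(t)\|_{L^2}^2\le e^{-\kappa c_M C_P t}\|\theta^{\rm in}\|_{L^2}^2 .
\]
This bound passes to the limit by the strong $C([0,T];L^2)$ convergence given by Aubin--Lions.

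\textbf{(b) Directly for weak solutions.} Assume $\theta\in L^2H^s$, which makes the nonlinear term lie in $L^2((H^s)^*)$ because $|\sin z|\le|z|$. Then $\partial_t\theta\in L^2((H^s)^*)$, and the Lions--Magenes lemma gives $\frac{\di}{\di t}\|\theta\|^2=2\langle\partial_t\theta,\theta\rangle$. For this one must extend the weak formulation from $C_c^\infty$ test functions to $H^s(\Omega)$ test functions, which requires density of $C_c^\infty(\Omega)$ in $H^s(\Omega)$; that density holds only for $s\le1/2$.

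Route (a) avoids this restriction, so it is the one we would try first.
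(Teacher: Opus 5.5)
Your proof is correct and is the paper's argument: the $L^2$ identity for $\theta-\bar\theta$ after symmetrization, the bound $z\sin z\ge c_M z^2$ from the diameter control of Theorem~\ref{T2.1}, the fractional Poincar\'e inequality, and Gr\"onwall. You are also right that the $C_P$ in the statement is the reciprocal of the one in Lemma~\ref{L2.2}, which the paper uses silently in \eqref{C-40}.

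The paper takes the energy identity for granted, so your justification goes beyond it. If you pursue route (a), run the computation for the regular mild solution $\theta^{\varepsilon,\delta}$ rather than for $\theta^\delta$, which is again only a weak solution, and pass to the limit in the time-integrated inequality, using Fatou's lemma to recover the untruncated kernel.
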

\begin{proof}
We combine the monotonicity property of the function $\sin x/x$ on $[0,\pi)$ with the fractional Poincaré inequality to find the desired estimates. We refer to Section \ref{sec:4} for more details.
\end{proof}
\begin{remark}
	 In reference \cite{Med,Medvedev2014,L-L-X,KoHa2025,C-H-W-Z-2025}, they consider the following four typical settings: the graph limit regime, bounded domains with uniform coupling, bounded domains with weakly singular integrable kernels, and spatially extended systems with regular integrable kernels. In contrast, our work focuses on a strongly singular, non-integrable 
	interaction kernel of fractional Laplacian type. In this regime, 
	the standard assumptions such as boundedness or integrability of the kernel 
	are no longer valid, and the existing analytical frameworks cannot be applied directly. To overcome these difficulties, we introduce a two-parameter regularization 
	procedure combined with fractional dissipation and compactness arguments 
	in fractional Sobolev spaces. As a result, we establish the global existence 
	of weak solutions and exponential relaxation toward the mean phase. Therefore, our results significantly extend the classical theory by providing 
	a rigorous description of the well-posedness and emergent synchronization 
	behavior of Kuramoto oscillators under strongly singular kernels, like non-integrable power-law singular kernels and Coulomb-type kernels.
\end{remark}

\vspace{0.5cm}

\section{Proof of Theorem \ref{T2.1}}\label{sec:3}
\setcounter{equation}{0}
In this section, we present detailed discussions outlined in the proof of Theorem \ref{T2.1}. Recall the fractional Laplacian kernel which is non-integrable:
\[
\psi(x,y) := \frac{1}{|x-y|^{d+2s}}, \qquad s\in(0,1),
\]
and the corresponding nonlocal continuum Kuramoto model:
\begin{equation}\label{C-1}
	\begin{cases}
		\partial_t \theta(t,x)
		= \kappa \displaystyle\int_\Omega \frac{1}{|x-y|^{d+2s}}
		\sin(\theta(t,y)-\theta(t,x))\,\di y,
		& t>0,\ x\in\Omega, \\[3ex]
		\theta \Big|_{t = 0} =\theta^{\mathrm{in}}.
	\end{cases}
\end{equation}
In what follows, we show the existence of a global weak solution to \eqref{C-1} via the steps delineated in Section \ref{sec:3.1}--Section \ref{sec:3.5}. 
\subsection{Doubly regularized equation}\label{sec:3.1}
In this subsection, we analyze the diameter of solution to the doubly regularized equation. For $\varepsilon>0$,  recall the truncated kernel
\[
\psi_\varepsilon(x,y):=\frac{1}{(|x-y|+\varepsilon)^{d+2s}},
\]
and the doubly regularized equation:
\begin{equation}\label{C-2}
	\begin{cases}
		\partial_t\theta^{\varepsilon,\delta}
		+\delta\,\displaystyle\int_{\Omega}
		\frac{\theta^{\varepsilon,\delta}(x)-\theta^{\varepsilon,\delta}(y)}{|x-y|^{d+2s}} \di y
		=\kappa\displaystyle\int_\Omega \psi_\varepsilon(x,y)\sin\big(\theta^{\varepsilon,\delta}(y)-\theta^{\varepsilon,\delta}(x)\big) \di y,
		\\[3ex]
		\theta^{\varepsilon,\delta} \Big|_{t = 0}=\theta^{\mathrm{in}}.
	\end{cases}
\end{equation}
Since $\psi_\varepsilon\in L^1(\Omega\times\Omega)$ and the sine function is globally Lipschitz,
the right-hand side of \eqref{C-2} is globally Lipschitz with respect to the $L^2$-norm (and $L^\infty$-norm).
Moreover, $-\delta(-\Delta)^s$ generates an analytic semigroup on $L^2(\Omega)$.
Hence \eqref{C-2} admits a unique solution on $[0,T]$. For more details on the existence of solutions to \eqref{C-2}, we refer to Proposition \ref{PA.1}. With the above preparations, we have the following diameter estimates.
\begin{lemma}[$L^\infty$-bound and bounded diameter]\label{L3.1}
Suppose that parameters satisfy 
\[s\in(0,1), \quad \varepsilon>0, \quad \delta>0, \]
and let $\theta^{\varepsilon,\delta}$ be a smooth solution to \eqref{C-2}
	with initial datum satisfying
	\begin{equation} \label{C-2-1}
	\theta^{\mathrm{in}}\in L^\infty(\Omega)\cap H^s(\Omega), \quad 
	{\mathcal D}[\theta^{\mathrm{in}}] <\pi.
	\end{equation}
	Then, one has the following estimates:~for $t \geq 0$, 
	\begin{equation} \label{C-3}
	\operatorname*{ess\,sup}_{x\in \Omega} \theta^{\varepsilon,\delta}(t)\le \operatorname*{ess\,sup}_{x\in \Omega} \theta^{\mathrm{in}}, \quad 
		\operatorname*{ess\,inf}_{x\in \Omega} \theta^{\varepsilon,\delta}(t)\ge \operatorname*{ess\,inf}_{x\in \Omega} \theta^{\mathrm{in}}, \quad 
	{\mathcal D}[\theta^{\varepsilon,\delta}(t)] \le {\mathcal D}[\theta^{\mathrm{in}}]<\pi.
	\end{equation}
\end{lemma}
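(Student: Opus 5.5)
We argue by a maximum principle applied to $M(t):=\operatorname*{ess\,sup}_{x}\theta^{\varepsilon,\delta}(t,x)$, and symmetrically to the essential infimum $m(t)$. Set $M_0:=\operatorname*{ess\,sup}\theta^{\mathrm{in}}$, $m_0:=\operatorname*{ess\,inf}\theta^{\mathrm{in}}$, so $M_0-m_0<\pi$. The claim for the diameter follows from the two one-sided bounds, and the infimum bound follows from the supremum bound applied to $-\theta^{\varepsilon,\delta}$, which solves the same equation since sine is odd. So it suffices to prove $\theta^{\varepsilon,\delta}(t,x)\le M_0$.

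The cleanest route, avoiding pointwise evaluation of the singular dissipative term at a maximum point, is a Stampacchia truncation in $L^2$. Let $w:=(\theta^{\varepsilon,\delta}-M_0)_+$ and compute
\[
\frac{1}{2}\frac{\di}{\di t}\|w\|_{L^2}^2=\int_\Omega w\,\partial_t\theta^{\varepsilon,\delta}\,\di x .
\]

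For the dissipative term, symmetrisation gives $-\delta\,\mathcal{A}(\theta^{\varepsilon,\delta},w)$ (with the factor convention of \eqref{B-3}). Since $(a-b)(a_+'-b_+')\ge (a_+'-b_+')^2\ge 0$ for the truncation $a\mapsto(a-M_0)_+$, this contribution is $\le 0$. This is exactly the Dirichlet-form/Markov property recalled in Section \ref{sec:2.2}.

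For the coupling term, symmetrise likewise:
\[
\kappa\iint\psi_\varepsilon\,\sin(\theta(y)-\theta(x))\,w(x)\,\di y\,\di x=-\frac{\kappa}{2}\iint\psi_\varepsilon\,\sin(\theta(x)-\theta(y))\,\bigl(w(x)-w(y)\bigr)\,\di y\,\di x .
\]
We need the integrand $\sin(\theta(x)-\theta(y))(w(x)-w(y))\ge0$. Since $w$ is nondecreasing in $\theta$, $\theta(x)-\theta(y)$ and $w(x)-w(y)$ have the same sign. Moreover, $\sin(\theta(x)-\theta(y))$ has that sign too, provided $|\theta(x)-\theta(y)|<\pi$ whenever $w(x)\neq w(y)$. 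Hence the term is $\le0$, which gives $\frac{\di}{\di t}\|w\|^2\le0$ and $w(0)=0$, so $w\equiv0$.

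The main obstacle is that the sign condition requires the diameter bound $<\pi$ a priori, which is circular. We close it with a continuity/bootstrap argument. The solution is continuous in time into $L^\infty$ (the right-hand side is Lipschitz in $L^\infty$ for the $\varepsilon$-kernel, and the semigroup is $L^\infty$-contractive by \eqref{NewL12}), so $\mathcal D[\theta^{\varepsilon,\delta}(t)]<\pi$ on a maximal interval $[0,T^*)$. On that interval the above argument gives both one-sided bounds, hence $\mathcal D\le\mathcal D[\theta^{\mathrm{in}}]<\pi$ with a fixed gap. By continuity this bound extends beyond $T^*$, contradicting maximality unless $T^*=\infty$.

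A slightly safer variant is to truncate at levels $M_0+\eta$, $m_0-\eta$ with $\eta>0$ small enough that $M_0-m_0+2\eta<\pi$, and let $\eta\to0$. Justifying the differentiation of $\|w\|^2$ uses the assumed smoothness of the solution, or the regularity from Proposition \ref{PA.1}.
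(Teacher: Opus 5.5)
Your proposal is correct and follows the paper's proof: the same Stampacchia truncation $w=(\theta^{\varepsilon,\delta}-k)_+$ with $k=\operatorname*{ess\,sup}\theta^{\mathrm{in}}$, the same sign argument for the dissipative term via monotonicity of the truncation, the same symmetrization of the coupling term (nonpositive while the diameter stays below $\pi$), and the same continuity argument on a maximal interval $[0,T_*)$. The differences are cosmetic. The paper uses the threshold $(\pi+\mathcal D[\theta^{\mathrm{in}}])/2$ for $T_*$ rather than $\pi$. It also handles the infimum directly with $(k-\theta)_+$, where you use the symmetry $\theta\mapsto-\theta$ of the equation, which you verified correctly.
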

\begin{proof} For the simplicity of notation, we denote $\theta = \theta^{\varepsilon,\delta}$ as long as there is no confusion. The proof relies on the truncation method combined with the continuity argument. \newline

We first fix $k\in\mathbb R$ and define 
\[ w:=(\theta-k)_+, \]
multiply \eqref{C-2} by $w$ and integrate the resulting relation over $\Omega$ to obtain
	\begin{equation} \label{C-4}
		\frac12\frac{\di}{\di t}\|w(t)\|_{L^2(\Omega)}^2
		+\delta\,\mathcal{A}(\theta(t),w(t))
		=\kappa\iint_{\Omega\times\Omega}\psi_\varepsilon(x,y)\sin(\theta(y)-\theta(x))\,w(x) \di y \di x,
	\end{equation}
	where $\mathcal{A}(u,v)$ is the symmetric bilinear form associated with the regional fractional Laplacian:
	\[
	\mathcal{A}(u,v):=\frac12\iint_{\Omega\times\Omega}\frac{(u(x)-u(y))(v(x)-v(y))}{|x-y|^{d+2s}} \di x \di y.
	\]
Since the map $r\mapsto (r-k)_+$ is nondecreasing, it holds pointwise
	\[
	(\theta(x)-\theta(y))(w(x)-w(y))\ge0.
	\]
	Hence, we have
	\begin{equation} \label{C-5}
	\mathcal{A}(\theta,w)\ge0. 
	\end{equation}
	We combine \eqref{C-4} and \eqref{C-5} to get
	\begin{equation}\label{C-6}
		\frac12\frac{\di}{\di t}\|w(t)\|_{L^2(\Omega)}^2
		\le
		\kappa\iint_{\Omega\times\Omega}\psi_\varepsilon(x,y)\sin(\theta(y)-\theta(x))\,w(x) \di y \di x,
	\end{equation}
	and we use
	\[ \psi_\varepsilon(x,y)=\psi_\varepsilon(y,x) \quad \mbox{and} \quad \sin(\theta(y)-\theta(x))=-\sin(\theta(x)-\theta(y)) \]
	to obtain
	\begin{align}\label{C-7} 
	\begin{split}
		&\iint_{\Omega\times\Omega} \psi_\varepsilon(x,y)\sin(\theta(y)-\theta(x))\,w(x) \di y \di x \\		
	&\hspace{1cm}	=\frac12\iint_{\Omega\times\Omega} \psi_\varepsilon(x,y)\sin(\theta(y)-\theta(x))\,(w(x)-w(y)) \di y \di x.
	\end{split}
	\end{align}
Next, we define
	\[
	T_*:=\sup\Big\{t>0:\ {\mathcal D}[\theta(\tau)] <\frac{\pi+ {\mathcal D}[\theta^{\mathrm{in}}]} {2}\ \text{for all }\tau\in[0,t)\Big\}.
	\]
By Proposition \ref{PA.1},
	the map $t\mapsto {\mathcal D}[\theta(t)]$ is continuous.
	 By assumption \eqref{C-2-1} and continuity, we have 
	 \[ T_*>0. \]	 
We claim that 
\begin{equation} \label{C-7-1}
T_* = \infty.
\end{equation}	
{\it Proof of \eqref{C-7-1}}: Suppose the contrary holds. Then, one has 
\begin{equation} \label{C-7-2}
{\mathcal D}[\theta(T_*)] = \frac{\pi+{\mathcal D}[\theta^{\mathrm{in}}]}{2}.
\end{equation}
For $t\in[0,T_*)$ we have 
	\[ |\theta(y)-\theta(x)|<\pi \quad \mbox{a.e.}, \]
	and thus we have
	\[
	\sin(\theta(y)-\theta(x))=(\theta(y)-\theta(x))\,m(\theta(y)-\theta(x)),
	\quad m(z):=\frac{\sin z}{z} \ge0 \ \mathrm{on} \ [-\pi,\pi].
	\]
	Since $(\theta(x)-\theta(y))(w(x)-w(y))\ge0$, we deduce
	\[
	\sin (\theta(y)-\theta(x))(w(x)-w(y)) = (\theta(y)-\theta(x))\,m(\theta(y)-\theta(x))  (w(x)-w(y))  \le0.
	\]
	Therefore, we use \eqref{C-7} and $\psi_\varepsilon\ge0$ to see that the right-hand side of \eqref{C-6} is nonpositive. Hence
	\[
	\frac{\di}{\di t}\|w(t)\|_{L^2(\Omega)}^2\le0,
	\qquad t\in[0,T_*).
	\]
In the sequel, we show that the estimates in \eqref{C-3} hold in the time-interval $[0, T^*)$. \newline	
	
\noindent $\bullet$~(Derivation of the first estimate in \eqref{C-3} in the time interval $[0, T^*)$):~We choose 
\[  k=\operatorname*{ess\,sup}_{x\in \Omega}\theta^{\mathrm{in}} \]
to get 
\[ w(0)=(\theta^{\mathrm{in}}-k)_+\equiv0 \quad \mbox{a.e.} \]
Thus we have
\[ \|w(t)\|_{L^2(\Omega)}^2=0 \quad \mbox{for all $t\in[0,T_*)$}. \]
This implies
	\begin{align} \label{C-8}
\operatorname*{ess\,sup}_{x\in \Omega}\theta(t)\le \operatorname*{ess\,sup}_{x\in \Omega}\theta^{\mathrm{in}},\qquad t\in[0,T_*).
\end{align}
\vspace{0.1cm}

\noindent $\bullet$~(Derivation of the second estimate in \eqref{C-3} in the time interval $[0, T^*)$):~We apply the same argument to $(k-\theta)_+$ with $k=\operatorname*{ess\,inf}_{x\in \Omega}\theta^{\mathrm{in}}$  to get 
	\begin{align} \label{C-9}
	\operatorname*{ess\,inf}_{x\in \Omega}\theta(t) \ge \operatorname*{ess\,inf}_{x\in \Omega}\theta^{\mathrm{in}},\qquad t\in[0,T_*).
	\end{align}
\vspace{0.1cm}

\noindent$\bullet$~(Derivation of the third estimate in \eqref{C-3} in the time interval $[0, T^*)$):~We combine \eqref{C-8} and \eqref{C-9} to get 	
	\begin{equation} \label{C-9-1}
	{\mathcal D}[\theta(t)] \le {\mathcal D}[\theta^{\mathrm{in}}] <\pi,
	\qquad t\in[0,T_*).
	\end{equation}
Letting $t \to T_*^-$, we have
\begin{equation} \label{C-9-2}
{\mathcal D}[\theta(T_*^{-})] \leq  {\mathcal D}[\theta^{\mathrm{in}}]  < \pi.
\end{equation}	
Then, we use $\eqref{C-2-1}_2$ and \eqref{C-9-2} to get 
	\[
	{\mathcal D}[\theta(T_*)] \leq  {\mathcal D}[\theta^{\mathrm{in}}]  < \frac{\pi+{\mathcal D}[\theta^{\mathrm{in}}]}{2}.
	\]
This is contradictory to \eqref{C-7-2}. Then, we verify \eqref{C-7-1}, i.e., $T_*=\infty$ and the desired estimates \eqref{C-8}, \eqref{C-9} and \eqref{C-9-1} hold for all $t \geq 0.$
\end{proof}
\vspace{0.25cm}

\subsection{Energy estimates}\label{sec:3.2}
In this subsection, we derive the energy identity for the approximate solution $\theta^{\varepsilon,\delta}$ in Section \ref{sec:3.1}. In particular, we establish uniform estimates for $\theta^{\varepsilon,\delta}$ independent of $\varepsilon$ (for fixed $\delta$).
\\[5pt]
For this, we multiply \eqref{C-2} by $\theta^{\varepsilon,\delta}$ and then integrate the resulting relation over $\Omega$ to get 
\begin{align*}
	\begin{aligned}
	&\frac12\frac{\di}{\di t}\|\theta^{\varepsilon,\delta}(t)\|_{L^2(\Omega)}^2
	+\delta\,\mathcal{A}\big(\theta^{\varepsilon,\delta}(t),\theta^{\varepsilon,\delta}(t)\big)\\
	&\qquad \qquad =\kappa\int_\Omega \theta^{\varepsilon,\delta}(t,x)\int_\Omega \psi_\varepsilon(x,y)
	\sin\big(\theta^{\varepsilon,\delta}(t,y)-\theta^{\varepsilon,\delta}(t,x)\big)\di y \di x,
	\end{aligned}
\end{align*}
where $\mathcal{A}$ is defined as in \eqref{B-3}.
\smallskip

\begin{lemma}[Energy dissipation identity]\label{L3.2}
	Let $\theta^{\varepsilon,\delta}$ be a global solution to the regularized problem \eqref{C-2}.  
	Then the following assertions hold.
\begin{enumerate}
\item
The total energy functional ${\mathcal E}_\varepsilon[\theta^{\varepsilon, \delta}]$ in $\eqref{B-3-1}_3$ satisfies dissipation estimate:
	\begin{equation}\label{C-10}
	 {\mathcal E}_\varepsilon[ \theta^{\varepsilon,\delta}(t)] + \int_0^t \|\partial_{\tau}\theta^{\varepsilon,\delta}(\tau)\|_{L^2(\Omega)}^2 \di \tau =  {\mathcal E}_\varepsilon[ \theta^{\mathrm{in}}], 
		\quad \text{for a.e. } t>0.
	\end{equation}
\item
The set $\{ \|\theta^{\varepsilon, \delta} (t)\|_{\dot H^s(\Omega)} \}$ is uniformly bounded in the parameter $\varepsilon$:
\begin{align} \label{NewC-16}
\|\theta^{\varepsilon,\delta} (t)\|_{\dot H^s(\Omega)}^2 \leq  \frac{\kappa+\delta}{\delta} \|\theta^{\rm in} \|_{\dot{H}_s(\Omega)}^2, \quad t > 0.
\end{align}	

\item
The set $\{ \| \partial_t \theta^{\varepsilon,\delta} (t) \|_{(H^s(\Omega))^*} \}$ is uniformly bounded in the parameter $\varepsilon$:
\begin{align} \label{NewC-14}
	\| \partial_t \theta^{\varepsilon,\delta} (t) \|_{(H^s(\Omega))^*}\leq \frac{\kappa +\delta}{2} \sqrt{\frac{\kappa+\delta}{\delta} } \|\theta^{\rm in} \|_{\dot{H}_s(\Omega)}, \quad t > 0.
\end{align}	
\end{enumerate}	

\end{lemma}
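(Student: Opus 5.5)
The plan is to observe that \eqref{C-2} is an $L^2$-gradient flow of ${\mathcal E}_\varepsilon$, test with $\partial_t\theta^{\varepsilon,\delta}$ to get \eqref{C-10}, and then read off (2) and (3) from it.

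For (1), write $\theta=\theta^{\varepsilon,\delta}$ and use the symmetry of $\psi_\varepsilon$ and $\psi$. Differentiating under the integral gives
\[
\frac{\di}{\di t}\mathcal E_{P,\varepsilon}[\theta]=\frac{\kappa}{2}\iint_{\Omega\times\Omega}\psi_\varepsilon(x,y)\sin(\theta(x)-\theta(y))\big(\partial_t\theta(x)-\partial_t\theta(y)\big)\di x\di y .
\]
By antisymmetry this equals $-\kappa\int_\Omega\partial_t\theta(x)\int_\Omega\psi_\varepsilon(x,y)\sin(\theta(y)-\theta(x))\di y\di x$. Similarly,
\[
\frac{\di}{\di t}\mathcal E_K[\theta]=\delta\,\mathcal A(\theta,\partial_t\theta)=\delta\int_\Omega\partial_t\theta(x)\int_\Omega\frac{\theta(x)-\theta(y)}{|x-y|^{d+2s}}\di y\di x.
\]
Testing \eqref{C-2} with $\partial_t\theta$ therefore yields $\frac{\di}{\di t}{\mathcal E}_\varepsilon[\theta]=-\|\partial_t\theta\|_{L^2}^2$, and integrating in time gives \eqref{C-10}.

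The main obstacle is justifying this computation, since we need $\partial_t\theta\in L^2_{t,x}$ and $\theta\in H^s$ with enough regularity to test. I would rely on the regularity from Proposition \ref{PA.1}: the semigroup generated by $-\delta A$ is analytic and the nonlinearity is Lipschitz, so on $(0,T]$ we have $\theta\in C^1((0,T];L^2)\cap C((0,T];D(A))$. I would derive the identity on $[t_0,t]$ and let $t_0\to0$ using the continuity of ${\mathcal E}_\varepsilon$ in $H^s$, which is where ``a.e.\ $t$'' enters. Alternatively, one can use maximal $L^2$-regularity for $A$ with initial datum $\theta^{\rm in}\in H^s=D(A^{1/2})$.

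For (2), drop the dissipation term and $\mathcal E_{P,\varepsilon}[\theta(t)]\ge0$ in \eqref{C-10}, which leaves $\frac{\delta}{4}\|\theta(t)\|_{\dot H^s}^2\le{\mathcal E}_\varepsilon[\theta^{\rm in}]$. Since $1-\cos z\le z^2/2$ and $\psi_\varepsilon\le\psi$, we get $\mathcal E_{P,\varepsilon}[\theta^{\rm in}]\le\frac{\kappa}{4}\|\theta^{\rm in}\|_{\dot H^s}^2$. Together these give \eqref{NewC-16}, uniformly in $\varepsilon$.

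For (3), take $\varphi\in H^s(\Omega)$ and symmetrize, using $|\sin z|\le|z|$ and $\psi_\varepsilon\le\psi$:
\[
|\langle\partial_t\theta,\varphi\rangle|\le\frac{\delta}{2}\iint\frac{|\theta(x)-\theta(y)||\varphi(x)-\varphi(y)|}{|x-y|^{d+2s}}+\frac{\kappa}{2}\iint\psi\,|\theta(y)-\theta(x)||\varphi(x)-\varphi(y)| .
\]
By Cauchy--Schwarz the right-hand side is at most $\frac{\kappa+\delta}{2}\|\theta\|_{\dot H^s}\|\varphi\|_{H^s}$. Inserting \eqref{NewC-16} then gives \eqref{NewC-14}.
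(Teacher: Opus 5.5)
Your proposal is correct and follows the paper's proof essentially step by step. For (1) you differentiate and symmetrize $\mathcal E_{P,\varepsilon}$ and $\mathcal E_K$ exactly as the paper does; for (2) you use $1-\cos z\le z^2/2$ with $\psi_\varepsilon\le\psi$; and for (3) you use symmetrization plus Cauchy--Schwarz with $|\sin z|\le|z|$. The one addition is that the paper carries out (1) only formally, whereas you justify it through analytic-semigroup regularity on $(0,T]$ and $t_0\to0$. That limit needs $\theta(t_0)\to\theta^{\rm in}$ in $H^s=D(A^{1/2})$, which follows from $\|A^{1/2}e^{-\tau A}\|\lesssim\tau^{-1/2}$, and it actually gives the identity for every $t$, not just almost every $t$.
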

\begin{proof}
 For notational simplicity, we suppress $\delta$-dependence in $\theta^{\varepsilon, \delta}$, since we consider a fixed $\delta$:
 \[  \theta^\varepsilon \equiv \theta^{\varepsilon,\delta}. \]
 (1)~We differentiate ${\mathcal E}_{P, \varepsilon}[\theta^\varepsilon(t)]$ in $\eqref{B-3-1}_1$ with respect to $t$ to find 
\begin{align}
\begin{aligned} \label{C-10-1}
	\frac{\di}{\di t} {\mathcal E}_{P, \varepsilon}[\theta^\varepsilon(t)]
	&=\frac{\kappa}{2}\iint_{\Omega\times\Omega} \psi_\varepsilon(x,y)\,
	\sin(\theta^\varepsilon(x)-\theta^\varepsilon(y))\,
	(\partial_t\theta^\varepsilon(x)-\partial_t\theta^\varepsilon(y)) \di x \di y.
\end{aligned}
\end{align}
Using the symmetry $\psi_\varepsilon(x,y)=\psi_\varepsilon(y,x)$ and exchanging
$x$ and $y$ on the right-hand side of \eqref{C-10-1}, we obtain
\begin{align}
\begin{aligned} \label{C-10-2}
	\frac{\di}{\di t} {\mathcal E}_{P, \varepsilon}[\theta^\varepsilon(t)]
	&=\int_\Omega \partial_t\theta^\varepsilon(x)
	\left[\kappa\int_\Omega \psi_\varepsilon(x,y)\sin(\theta^\varepsilon(x)-\theta^\varepsilon(y)) \di y\right]\di x.
\end{aligned}
\end{align}
Similarly, we have 
\begin{align} \label{C-10-3}
	\begin{aligned}
&	\frac{\di}{\di t} {\mathcal E}_{K}[\theta^\varepsilon(t)]
	&=\int_\Omega \partial_t\theta^\varepsilon(x)
	\left[\delta\int_\Omega \psi(x,y)(\theta^\varepsilon(x)-\theta^\varepsilon(y)) \di y\right]\di x.
	\end{aligned}
\end{align}
On the other hand, we use $\sin(\theta^\varepsilon(x)-\theta^\varepsilon(y))=-\sin(\theta^\varepsilon(y)-\theta^\varepsilon(x))$ to see
\begin{equation} \label{C-10-4}
\partial_t\theta^\varepsilon(x)
= -\kappa \int_\Omega \psi_\varepsilon(x,y)\sin(\theta^\varepsilon(x)-\theta^\varepsilon(y)) \di y-\delta\int_\Omega \psi(x,y)(\theta^\varepsilon(x)-\theta^\varepsilon(y))\di y.
\end{equation}
We combine \eqref{C-10-2}, \eqref{C-10-3} and \eqref{C-10-4} to obtain 
\[
\frac{\di}{\di t} {\mathcal E}_\varepsilon[ \theta^{\varepsilon,\delta}(t)] = -\int_\Omega |\partial_t\theta^\varepsilon(x)|^2 \di x
=-\|\partial_t\theta^\varepsilon(t)\|_{L^2(\Omega)}^2.
\]
Finally, we integrate above equality over $(0,t)$ to get the desired energy estimate \eqref{C-10}:
\begin{equation}\label{C-11}
	{\mathcal E}_{P,\varepsilon}[\theta^\varepsilon(t)]+ {\mathcal E}_{K}[\theta^\varepsilon(t)]
	+\int_0^t \|\partial_\tau\theta^\varepsilon(\tau)\|_{L^2(\Omega)}^2 \di \tau
	=\mathcal E_{P,\varepsilon}(\theta^{\rm in})+ \mathcal E_{K}(\theta^{\rm in}), \quad t\ge0.
\end{equation}

\noindent (2)~For the desired uniform bound, we take two steps as follows. \newline

\noindent $\bullet$~\textbf{Step A} (Derivation of $ {\mathcal E}_\varepsilon[ \theta^{\varepsilon,\delta}(t)] \lesssim \|\theta^{\rm in} \|_{\dot{H}_s(\Omega)}^2$):~By direct calculations, one has 
\begin{align} 
\begin{aligned} \label{C-12}
\mathcal E_{P, \varepsilon}[\theta^{\rm in}] &= \frac{\kappa}{2}\iint_{\Omega\times \Omega}\frac{1 - \cos(\theta^{\rm in} (y) - \theta^{\rm in} (x))}{(|x-y|+\varepsilon)^{d+2s}}  \di y \di x \\
& = \frac{\kappa}{2} \iint_{\Omega \times \Omega}\frac{2\sin^2\frac{\theta^{\rm in} (y) - \theta^{\rm in} (x)}{2}}{(|x-y|+\varepsilon)^{d+2s}} \di y \di x \\
& \leq  \frac{\kappa}{4} \iint_{\Omega \times \Omega}\frac{(\theta^{\rm in} (y) - \theta^{\rm in} (x))^2}{(|x-y|+\varepsilon)^{d+2s}} \di y \di x \\
& \leq \frac{\kappa}{4} \|\theta^{\rm in} \|^2_{\dot{H}_s(\Omega)},
\end{aligned}
\end{align}
and
\begin{equation}\label{C-13}
{\mathcal E}_{K}[\theta^{\rm in}] = \frac{\delta}{4}\iint_{\Omega\times \Omega}\frac{(\theta^{\rm in} (x) - \theta^{\rm in} (y))^2}{|x-y|^{d+2s}} \, \di y \di x  = \frac{\delta}{4} \|\theta^{\rm in} \|_{\dot{H}_s(\Omega)}^2.
\end{equation}
We combine estimates in \eqref{C-11}, \eqref{C-12} and \eqref{C-13} to find 
\begin{equation} \label{C-13-1}
{\mathcal E}_\varepsilon[ \theta^{\varepsilon,\delta}(t)]={\mathcal E}_{P, \varepsilon}[\theta^\varepsilon(t)] + {\mathcal E}_{K}[\theta^\varepsilon(t)] \leq {\mathcal E}_{P,\varepsilon}[\theta^{\rm in}] + {\mathcal E}_{K}[\theta^{\rm in}] \leq \frac{\kappa+\delta}{4} \|\theta^{\rm in} \|_{\dot{H}_s(\Omega)}^2.
\end{equation}
\vspace{0.2cm}

\noindent $\bullet$~\textbf{Step B} (Derivation of $\|\theta^\varepsilon (t)\|_{\dot H^s(\Omega)}^2 \lesssim   \mathcal{E}_K (\theta^\varepsilon (t))$):~
Now, we use Lemma \ref{L3.1} to find 
\begin{align}\label{C-15}
	\|\theta^{\varepsilon}(t)\|_{L^\infty(\Omega)}^2\le C.
	\end{align}
Since the domain $\Omega$ is bounded, this also yields a uniform bound for the $L^2$-norm:
\begin{align}\label{C-15-1}
	\|\theta^{\varepsilon}(t)\|_{L^2(\Omega)}^2\le C.
	\end{align}
We use $\eqref{B-3-1}_2$ and \eqref{C-13-1} to get 
\begin{equation} \label{C-16}
\|\theta^\varepsilon (t)\|_{\dot H^s(\Omega)}^2 = \frac{4}{\delta} \mathcal{E}_K[\theta^\varepsilon (t)] \leq  \frac{\kappa+\delta}{\delta} \|\theta^{\rm in} \|_{\dot{H}_s(\Omega)}^2.
\end{equation}	
\vspace{0.2cm}

\noindent (3) We use the Cauchy-Schwarz inequality, definition of  $\|\varphi\|_{\dot H^{s}(\Omega)}^2$ in \eqref{A-6}, $|\sin \theta| \leq |\theta|$ and \eqref{C-13-1} to get 
\begin{align} \label{C-14}
	&\| \partial_t \theta^\varepsilon (t) \|_{(H^s(\Omega))^*}  = \sup_{\varphi \in H^s(\Omega)\atop\|\varphi\|_{H^s(\Omega)}\le1} \int_\Omega \partial_t \theta^\varepsilon (x) \varphi (x)  \di x \nonumber\\
	& \hspace{0.5cm} =  \sup_{\varphi \in H^s(\Omega)\atop\|\varphi\|_{H^s(\Omega)}\le1} \Bigg( \kappa \iint_{\Omega \times \Omega} \frac{\sin(\theta^\varepsilon (y) - \theta^\varepsilon (x))}{(|x-y|+\varepsilon)^{d+2s}}  \varphi (x)  \di y \di x +\delta\iint_{\Omega \times \Omega} \frac{\theta^\varepsilon (y) - \theta^\varepsilon (x)}{|x-y|^{d+2s}}  \varphi (x)  \di y \di x \Bigg) \nonumber\\
	& \hspace{0.5cm} \leq  \sup_{\varphi \in H^s(\Omega)\atop\|\varphi\|_{H^s(\Omega)}\le1} \Bigg( \frac{\kappa}{2} \iint_{\Omega \times \Omega} \frac{\sin(\theta^\varepsilon (y) - \theta^\varepsilon (x))}{(|x-y|+\varepsilon)^{d+2s}}  (\varphi (x) - \varphi (y))  \di y \di x \nonumber\\
	&\hspace{5.5cm}+ \frac{\delta}{2} \iint_{\Omega \times \Omega}\frac{\theta^\varepsilon (y) - \theta^\varepsilon (x)}{|x-y|^{d+2s}}  (\varphi (x) - \varphi (y)) \di y \di x \Bigg) \nonumber\\
	& \hspace{0.5cm} \leq  \sup_{\varphi \in H^s(\Omega)\atop\|\varphi\|_{H^s(\Omega)}\le1} \| \varphi \|_{H^s(\Omega)}  \Bigg( \frac{\kappa}{2} \left( \iint_{\Omega \times \Omega} \frac{\sin^2 (\theta^\varepsilon (y) - \theta^\varepsilon (x))}{(|x-y|+\varepsilon)^{d+2s}}  \di y \di x \right)^{\frac{1}{2}} \nonumber\\
	& \hspace{6.5cm} + \frac{\delta}{2} \left( \iint_{\Omega \times \Omega}\frac{(\theta^\varepsilon (y) - \theta^\varepsilon (x))^2}{|x-y|^{d+2s}}  \di y \di x \right)^{\frac{1}{2}} \Bigg) \\  
	& \hspace{0.5cm} \leq  \frac{\kappa}{2} \left( \frac{4}{\delta}\mathcal{E}_K [\theta^\varepsilon (t)] \right)^\frac{1}{2}  + \frac{\delta}{2}\left(\frac{4}{\delta} \mathcal{E}_K[\theta^\varepsilon (t)] \right)^\frac{1}{2}\nonumber \\
	& \hspace{0.5cm} \leq  \frac{\kappa}{2} \left( \frac{4}{\delta}\frac{\kappa+\delta}{4} \|\theta^{\rm in} \|_{\dot{H}_s(\Omega)}^2 \right)^\frac{1}{2}  + \frac{\delta}{2}\left(\frac{4}{\delta} \frac{\kappa+\delta}{4} \|\theta^{\rm in} \|_{\dot{H}_s(\Omega)}^2 \right)^\frac{1}{2}   \nonumber\\
	& \hspace{0.5cm} \leq \frac{\kappa +\delta}{2} \sqrt{\frac{\kappa+\delta}{\delta} } \|\theta^{\rm in} \|_{\dot{H}_s(\Omega)}.\nonumber
\end{align}
\end{proof}
With the above estimates \eqref{NewC-16} and \eqref{NewC-14}, we are ready to use Aubin--Lions lemma (see Lemma \ref{L2.1}) in next subsection.
\subsection{Passage to the limit in $\{ \theta^{\varepsilon, \delta} \}_{\varepsilon}$} We use the embeddings
\[
H^s(\Omega)\hookrightarrow \hookrightarrow  L^2(\Omega)\hookrightarrow (H^{s}(\Omega))^\ast,
\]
where the first embedding is compact and the second embedding is continuous for $\Omega \subset \bbr^d, d\ge1$ bounded with Lipschitz boundary \cite{DiNezza2012}. By \eqref{C-15-1}, \eqref{C-16}, \eqref{C-14}, Aubin--Lions lemma \ref{L2.1} and Banach–Alaoglu theorem,
there exists a subsequence $\varepsilon_j\to0$ and a limit function
\begin{equation} \label{C-16-1}
\theta^\delta\in L^2([0,T);H^s(\Omega))\cap C([0,T);L^2(\Omega))
\end{equation}
such that
\begin{equation}\label{C-17}
\begin{cases}
\displaystyle \theta^{\varepsilon_j,\delta}\to \theta^\delta\quad\text{strongly in }C([0,T);L^2(\Omega)), \\[1em]
\displaystyle \theta^{\varepsilon_j,\delta}\rightharpoonup \theta^\delta\quad\text{weakly in }L^2([0,T);H^s(\Omega)).
\end{cases}
\end{equation}
Moreover, by Lemma \ref{L3.1} and subsequence extraction, we have
for all $t\ge0$,
\begin{equation*}
	\operatorname*{ess\,sup}_{x\in \Omega} \theta^{\delta}(t,\cdot)\le \operatorname*{ess\,sup}_{x\in \Omega} \theta^{\mathrm{in}},\qquad
	\operatorname*{ess\,inf}_{x\in \Omega} \theta^{\delta}(t,\cdot)\ge \operatorname*{ess\,inf}_{x\in \Omega} \theta^{\mathrm{in}}.
\end{equation*}
In particular, it follows from Proposition  \ref{PA.2} that 
\begin{equation*}
	{\mathcal D}[\theta^{\delta}(t)] \le {\mathcal D}[\theta^{\mathrm{in}}] <\pi,
	\qquad t\ge0.
\end{equation*}
Next, we show that the limit $\theta^{\delta}$ satisfies the following Cauchy problem:
\begin{align}\label{C-18}
\begin{cases}
\displaystyle \partial_t\theta^\delta(t,x)
	+\delta\,\displaystyle\int_{\Omega}\frac{\theta^\delta(t,x)-\theta^\delta(t,y)}{|x-y|^{d+2s}}\,\di y \\[2em]
\displaystyle \hspace{1.5cm} =\kappa\int_\Omega \frac{\sin\big(\theta^\delta(t,y)-\theta^\delta(t,x)\big)}{|x-y|^{d+2s}}\,\di y,~~t > 0,~x \in \Omega, \\[2em]
\displaystyle \theta^\delta \Big|_{t = 0} =\theta^{\mathrm{in}}.
\end{cases}
\end{align}
\begin{lemma}\label{L3.3}
The limit $\theta^\delta$ satisfying the regularity \eqref{C-16-1} is a weak solution to \eqref{C-18}.
\end{lemma}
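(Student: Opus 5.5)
We write the weak formulation of \eqref{C-2} for $\theta^{\varepsilon_j,\delta}$ and pass to the limit term by term. For $\varphi\in C_c^\infty([0,T)\times\Omega)$ we symmetrize both nonlocal terms using $\psi_\varepsilon(x,y)=\psi_\varepsilon(y,x)$ and oddness, as in \eqref{C-7} and \eqref{C-14}:
\begin{align*}
&\int_0^T\!\!\int_\Omega \theta^{\varepsilon_j,\delta}\partial_t\varphi \di x\di t+\int_\Omega\theta^{\mathrm{in}}\varphi(0)\di x
-\frac{\delta}{2}\int_0^T\!\!\iint_{\Omega\times\Omega}\frac{(\theta^{\varepsilon_j,\delta}(x)-\theta^{\varepsilon_j,\delta}(y))(\varphi(x)-\varphi(y))}{|x-y|^{d+2s}}\di y\di x\di t\\
&\qquad -\frac{\kappa}{2}\int_0^T\!\!\iint_{\Omega\times\Omega}\psi_{\varepsilon_j}(x,y)\sin\big(\theta^{\varepsilon_j,\delta}(y)-\theta^{\varepsilon_j,\delta}(x)\big)(\varphi(y)-\varphi(x))\di y\di x\di t=0 .
\end{align*}
The weak formulation of \eqref{C-18} is understood in this symmetrized form. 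This is legitimate because for $\theta^\delta\in L^2(0,T;H^s)$ the nonlocal integrands are absolutely integrable after symmetrization, while the unsymmetrized principal value need not be.

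The linear terms pass to the limit directly. The first term converges by the strong convergence in \eqref{C-17}. The $\delta$-term equals $-\delta\int_0^T\mathcal A(\theta^{\varepsilon_j,\delta},\varphi)\di t$. Since $\varphi\in L^2(0,T;H^s(\Omega))$, this is a bounded linear functional on $L^2(0,T;H^s(\Omega))$, so it converges by the weak convergence in \eqref{C-17}.

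The nonlinear $\kappa$-term is the main obstacle, since $\psi_{\varepsilon_j}\uparrow\psi$ is non-integrable. We rewrite the integrand as $F_j\,G_j$, where
\[
F_j:=\sqrt{\psi_{\varepsilon_j}}\,\sin\big(\theta^{\varepsilon_j,\delta}(y)-\theta^{\varepsilon_j,\delta}(x)\big),\qquad G_j:=\sqrt{\psi_{\varepsilon_j}}\,(\varphi(y)-\varphi(x)).
\]
Since $\varphi$ is Lipschitz, $|G_j|\le C|x-y|^{1-(d+2s)/2}$. Hence $G_j\to G:=\sqrt{\psi}\,(\varphi(y)-\varphi(x))$ strongly in $L^2((0,T)\times\Omega\times\Omega)$ by dominated convergence, using that $|x-y|^{2-d-2s}$ is integrable. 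For $F_j$, \eqref{C-13-1} gives the uniform bound $\|F_j\|_{L^2}^2\le C\,T\,\|\theta^{\rm in}\|_{\dot H^s}^2$. Passing to a further subsequence, the strong $L^2$ convergence gives $\theta^{\varepsilon_j,\delta}\to\theta^\delta$ a.e. in $(t,x)$. Therefore $F_j\to F:=\sqrt\psi\sin(\theta^\delta(y)-\theta^\delta(x))$ a.e. on $(0,T)\times\Omega\times\Omega$ off the diagonal. A uniform $L^2$ bound together with a.e. convergence yields $F_j\rightharpoonup F$ weakly in $L^2$. Combined with the strong convergence $G_j\to G$, this gives $\int F_jG_j\to\int FG$, which is exactly the desired limit term.

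The initial condition is already built into the weak formulation via the term $\int_\Omega\theta^{\mathrm{in}}\varphi(0)\di x$. It is also consistent with $\theta^\delta\in C([0,T);L^2(\Omega))$ and $\theta^{\varepsilon_j,\delta}(0)=\theta^{\mathrm{in}}$. Finally, a diagonal argument over $T\in\mathbb N$ makes the result global in time.
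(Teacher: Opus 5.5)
Your proof is correct. The symmetrized weak formulation and the treatment of the time and $\delta$-terms coincide with Steps A--B of the paper; the difference lies in the nonlinear $\kappa$-term.

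The paper fixes a cutoff $\rho$ and splits $\Omega\times\Omega$ into two regions:
\begin{itemize}
\item on $\{|x-y|>\rho\}$ it uses $\psi_{\varepsilon_j}\le\rho^{-d-2s}$ and dominated convergence;
\item on $\{|x-y|\le\rho\}$ it combines $|\sin a|\le|a|$, the Lipschitz bound on $\varphi$, Cauchy--Schwarz and the $\varepsilon$-uniform $\dot H^s$ bound on $\theta^{\varepsilon,\delta}$ to get the uniform estimate $C\rho^{1-s}$. That bound comes from $\mathcal E_K$, i.e.\ from the $\delta$-dissipation.
\end{itemize}
It then lets $j\to\infty$ and afterwards $\rho\to0$.

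Your factorization $\psi_{\varepsilon_j}=\sqrt{\psi_{\varepsilon_j}}\cdot\sqrt{\psi_{\varepsilon_j}}$ packages the same Cauchy--Schwarz splitting into a single weak--strong pairing. You get $G_j\to G$ strongly by dominated convergence with the integrable majorant $\|\nabla\varphi\|_{L^\infty}^2|x-y|^{2-d-2s}$. You get $F_j\rightharpoonup F$ because an $L^2$-bounded sequence converging a.e.\ on a finite measure space converges weakly to its pointwise limit.

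What your route buys:
\begin{itemize}
\item It removes the double limit in $j$ and $\rho$.
\item Your bound on $F_j$ can be drawn from the potential energy $\mathcal E_{P,\varepsilon}$ alone, via $\sin^2 a\le 4\sin^2(a/2)$, rather than from the $\dot H^s$ norm of $\theta^{\varepsilon,\delta}$ itself.
\item Consequently the same pairing would handle the nonlinear term in Lemma~\ref{L3.5} using only \eqref{C-29-3}(i), without the factor $M/\sin M$ from \eqref{C-30}.
\end{itemize}
What the paper's splitting gives in exchange is an explicit $\varepsilon$-uniform rate $C\rho^{1-s}$ for the near-diagonal contribution, which the lemma does not need.

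Your remark that the weak formulation must be read in symmetrized form is consistent with what the paper actually derives in Step D.
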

\begin{proof} We first fix $T>0$ and $\delta>0$, and let $\{\theta^{\varepsilon,\delta}\}_{\varepsilon>0}$ be a family of solutions to
	\eqref{C-2}. By \eqref{C-15-1}--\eqref{C-14}, there exists a constant independent of $\varepsilon$ such that
	\begin{equation}\label{C-19}
		\|\theta^{\varepsilon,\delta}\|_{L^\infty([0,T);L^2(\Omega))}
		+\|\theta^{\varepsilon,\delta}\|_{L^2([0,T);H^s(\Omega))}
		+\|\partial_t\theta^{\varepsilon,\delta}\|_{L^2([0,T);(H^{s}(\Omega))^\ast)}
		\le C(T, \delta,\kappa).
	\end{equation}
	As discussed above in \eqref{C-17}, there exists $\varepsilon_j\to0$ and
	\[
	\theta^\delta\in L^2([0,T);H^s(\Omega))\cap C([0,T);L^2(\Omega))
	\]
	such that
	\begin{align}
	\begin{aligned} \label{C-20}
	& \theta^{\varepsilon_j,\delta}\to \theta^\delta \quad\text{strongly in } C([0,T);L^2(\Omega)), \\
        & \theta^{\varepsilon_j,\delta}\rightharpoonup \theta^\delta \quad\text{weakly in } L^2([0,T);H^s(\Omega)).
	\end{aligned}
	\end{align}
	Since the proof is rather lengthy, we split the proof into four steps.
	
	\medskip
	\noindent$\bullet $ \textbf{Step A} (Weak formulation and symmetrization):
	Let $\varphi\in C_c^\infty([0,T)\times\Omega)$. We multiply \eqref{C-2} by $\varphi$ and integrate the resulting relation over
	$[0,T)\times\Omega$, and then we use integration by parts in time to obtain
	\begin{align}
	\begin{aligned} \label{C-21}
		&-\int_0^T\!\!\int_\Omega \theta^{\varepsilon,\delta}\,\partial_t\varphi \di x\di t
		-\int_\Omega \theta^{\rm in}(x)\,\varphi(0,x)\,\di x
		+\delta\int_0^T \mathcal{A}(\theta^{\varepsilon,\delta}(t),\varphi(t))\di t \\
		&\hspace{1cm} = \kappa\int_0^T\!\!\iint_{\Omega \times \Omega}
		\psi_\varepsilon(x,y)\,\sin\!\big(\theta^{\varepsilon,\delta}(t,y)-\theta^{\varepsilon,\delta}(t,x)\big)\,
		\varphi(t,x) \di y\di x\di t, 
	\end{aligned}
	\end{align}
	where $\mathcal{A}$ is defined as in \eqref{B-3}. Using the oddness of the sine function and exchanging $x$ and $y$, the right-hand side of \eqref{C-21} can be symmetrized as
	\begin{align*}
		&\iint_{\Omega \times \Omega} \psi_\varepsilon(x,y)\sin(\theta^{\varepsilon, \delta}(y)-\theta^{\varepsilon, \delta}(x))\,\varphi(x)\di x\di y \\
		&\hspace{1cm}=\frac12\iint_{\Omega \times \Omega} \psi_\varepsilon(x,y)\sin(\theta^{\varepsilon, \delta}(y)-\theta^{\varepsilon, \delta}(x))(\varphi(x)-\varphi(y)) \di x\di y .
	\end{align*}
	Hence the relation \eqref{C-21} is equivalent to
	\begin{align}
		&-\int_0^T\!\!\int_\Omega \theta^{\varepsilon,\delta}\,\partial_t\varphi \di x\di t
		-\int_\Omega \theta^{\rm in}(x)\,\varphi(0,x)\,\di x
		+\delta\int_0^T \mathcal{A}(\theta^{\varepsilon,\delta}(t),\varphi(t)) \di t \notag\\
		&\qquad = \frac{\kappa}{2}\int_0^T\!\!\iint_{\Omega\times\Omega}
		\psi_\varepsilon(x,y)\,\sin\!\big(\theta^{\varepsilon,\delta}(t,y)-\theta^{\varepsilon,\delta}(t,x)\big)\,
		(\varphi(t,x)-\varphi(t,y)) \di y\di x\di t. \label{C-22}
	\end{align}
	
	\medskip
	\noindent$\bullet $ \textbf{Step B} (Passage to the limit on the left-hand side):~By the strong convergence in \eqref{C-20}, we obtain
	\[
	\int_0^T\!\!\int_\Omega \theta^{\varepsilon_j,\delta}\,\partial_t\varphi \di x\di t
	\to
	\int_0^T\!\!\int_\Omega \theta^\delta\,\partial_t\varphi \di x\di t.
	\]
	Since $\varphi(\cdot,t)\in H^s(\Omega)$, the map
	\[
	u\mapsto  \mathcal{A}(u,\varphi(t))
	\]
	defines a continuous linear functional on $H^s(\Omega)$ and
	\[
	|\mathcal A(u,\varphi(t))|\le \frac12 \|u\|_{H^s(\Omega)}\|\varphi(t)\|_{H^s(\Omega)}.
	\]
	Hence $t\mapsto \mathcal (u\mapsto \mathcal{A}(u,\varphi(t)))$ belongs to $L^2([0,T);(H^s(\Omega))^*)$.
	Since $\theta^{\varepsilon_j,\delta}\rightharpoonup \theta^\delta$ weakly in
	$L^2([0,T);H^s(\Omega))$ in \eqref{C-20}, by the characterization of weak convergence in Bochner spaces we obtain
	\[
	\int_0^T \mathcal A(\theta^{\varepsilon_j,\delta}(t),\varphi(t))\,\di t
	\to
	\int_0^T \mathcal A(\theta^\delta(t),\varphi(t))\,\di t.
	\]
	
	\medskip
	\noindent$\bullet $ \textbf{Step C} (Passage to the limit in the nonlocal nonlinear term):~We set 
	\[
	R_{\varepsilon}(t):=\iint_{\Omega\times \Omega}
	\psi_\varepsilon(x,y)\,\sin\!\big(\theta^{\varepsilon,\delta}(t,y)-\theta^{\varepsilon,\delta}(t,x)\big)\,
	(\varphi(t,x)-\varphi(t,y))\,\di x\di y \quad  \mathrm{for} \  \mathrm{a.e.} \ t\in(0,T).
	\]
	We claim that along the subsequence $\varepsilon_j\to0$, it holds
	\begin{equation}\label{C-23}
		\int_0^T R_{\varepsilon_j}(t)\,\di t
		\to
		\int_0^T\!\!\iint_{\Omega \times \Omega}
		\frac{\sin\!\big(\theta^\delta(t,y)-\theta^\delta(t,x)\big)}{|x-y|^{d+2s}}\,
		(\varphi(t,x)-\varphi(t,y))\,\di x\di y\,\di t .
	\end{equation}
	{\it Proof of \eqref{C-23}}:  Fix $\rho\in(0,1)$ and we decompose $\Omega \times \Omega$ as 
	\[
	\Omega\times\Omega = \Big \{(x,y):|x-y|>\rho\}\cup \{(x,y):|x-y|\le\rho \Big \},
	\]
	and we write 
	\begin{align*}
	\begin{aligned}
	R_{\varepsilon}(t) &=\iint_{|x-y|>\rho}
	\psi_\varepsilon(x,y)\,\sin\!\big(\theta^{\varepsilon,\delta}(t,y)-\theta^{\varepsilon,\delta}(t,x)\big)\,
	(\varphi(t,x)-\varphi(t,y))\,\di x\di y \\
	&\qquad+ \iint_{|x-y| \leq \rho}
	\psi_\varepsilon(x,y)\,\sin\!\big(\theta^{\varepsilon,\delta}(t,y)-\theta^{\varepsilon,\delta}(t,x)\big)\,
	(\varphi(t,x)-\varphi(t,y))\,\di x\di y  \\
	&=: R_{\varepsilon}^{>\rho}(t)+R_{\varepsilon}^{\le\rho}(t).
	\end{aligned}
	\end{align*}
	We further consider two cases. \newline
	
	\noindent$\diamond$ \textbf{Case C.1}: On $\{|x-y|>\rho\}$, we have 
	\[
	0\le \psi_{\varepsilon_j}(x,y)\le \rho^{-d-2s},
	\qquad
	\psi_{\varepsilon_j}(x,y)\to |x-y|^{-d-2s}\quad\text{uniformly on }\{|x-y|\ge\rho\}.
	\]
	Moreover, $|\sin(\cdot)|\le 1$ and $\varphi$ is bounded, hence the integrands in $R_{\varepsilon_j} (t)$ are dominated by an $L^1$-function
	(independent of $j$) on $\{|x-y|>\rho\}$. Since $\theta^{\varepsilon_j,\delta}\to\theta^\delta$ strongly in
	$C([0,T);L^2(\Omega))$, up to a further subsequence, we also have
	\[ \theta^{\varepsilon_j,\delta}(t,x)\to\theta^\delta(t,x) \quad \mbox{for a.e.\ $(t,x)$}.  \]
	This implies
	\[
	\sin\big(\theta^{\varepsilon_j,\delta}(t,y)-\theta^{\varepsilon_j,\delta}(t,x)\big)
	\to
	\sin\big(\theta^\delta(t,y)-\theta^\delta(t,x)\big)
	\quad\text{for a.e.\ }(t,x,y).
	\]
	Therefore, by dominated convergence theorem (for fixed $\rho$), one has 
	\begin{equation}\label{C-24}
		\int_0^T R_{\varepsilon_j}^{>\rho}(t) \di t
		\to
		\int_0^T\!\!\iint_{\{|x-y|>\rho\}}
		\frac{\sin\!\big(\theta^\delta(t,y)-\theta^\delta(t,x)\big)}{|x-y|^{d+2s}}\,
		(\varphi(t,x)-\varphi(t,y)) \di x\di y \di t .
	\end{equation}
	\vspace{0.2cm}
	
	\noindent$\diamond$ \textbf{Case C.2}: On $\{ |x-y|\le\rho \}$, we find an estimate uniform in $\varepsilon$.
	Using the smoothness of $\varphi$, we obtain
	\[
	|\varphi(t,x)-\varphi(t,y)|\le \|\nabla\varphi\|_{L^\infty((0,T)\times\Omega)}\,|x-y|.
	\]
	Moreover, it holds 
	\[ \psi_{\varepsilon}(x,y)\le |x-y|^{-d-2s} \quad \mbox{and} \quad |\sin a|\le |a|. \]
	Hence, for a.e.\ $t$,
	\begin{align}
		|R_{\varepsilon}^{\le\rho}(t)|
		&\le \iint_{\{|x-y|\le\rho\}}
		\frac{|\theta^{\varepsilon,\delta}(t,y)-\theta^{\varepsilon,\delta}(t,x)|}{|x-y|^{d+2s}}\,
		|\varphi(t,x)-\varphi(t,y)| \di x\di y \notag\\
		&\le\|\nabla\varphi\|_{L^\infty((0,T)\times\Omega)} \iint_{\{|x-y|\le\rho\}}
		\frac{|\theta^{\varepsilon,\delta}(t,y)-\theta^{\varepsilon,\delta}(t,x)|}{|x-y|^{d+2s-1}} \di x\di y  \label{C-25} \\
		&\le\|\nabla\varphi\|_{L^\infty((0,T)\times\Omega)}
		\| \theta^{\varepsilon,\delta}(t) \|_{\dot{H}^s(\Omega)}	\left(\iint_{\{|x-y|\le\rho\}}\frac{1}{|x-y|^{d+2s-2}} \di x\di y\right)^{\!\!1/2}. \notag 
	\end{align}
	By \eqref{C-19}, the Gagliardo seminorm of $\theta^{\varepsilon,\delta}(t)$ is bounded by a constant independent of $\varepsilon$. For the last term, a change of variables $z=x-y$ gives
	\begin{align*}
	\begin{aligned}
	& \iint_{\{|x-y|\le\rho\}}\frac{1}{|x-y|^{d+2s-2}}\,\di x\di y \\
	& \hspace{1cm} \le C_\Omega \iint_{\{|z|\le\rho\}}\frac{1}{|z|^{d+2s-2}}\,dz = C_{\Omega,d} \int_0^\rho r^{1-2s}\,dr = \frac{C_{\Omega,d}}{2-2s}\,\rho^{2-2s} \quad \to 0,
	\end{aligned}
	\end{align*}
 as $\rho\to0$ since $s\in(0,1)$. We combine this with \eqref{C-25} and integrate the resulting relation in time to get the uniform smallness estimate:
	\begin{equation}\label{C-26}
		\sup_{\varepsilon>0}\int_0^T |R_{\varepsilon}^{\le\rho}(t)|\,\di t
		\le C\rho^{1-s}\qquad\text{for all }\rho\in(0,1),
	\end{equation}
	for a constant $C$ independent of $\varepsilon$ and $\rho$. \newline

	Let $j\to\infty$ in \eqref{C-24} and then let $\rho\to0$.
	Since the near-field contributions are uniformly small by \eqref{C-26}, we obtain \eqref{C-23}.
	
	\medskip
	\noindent$\bullet $ \textbf{Step D} (Limiting equation):~Passing to the limit $j\to\infty$ in \eqref{C-22} using \textbf{Step~B} and \eqref{C-23}, we conclude that for every
	$\varphi\in C_c^\infty([0,T)\times\Omega)$,
	\begin{align*}
		&-\int_0^T\!\!\int_\Omega \theta^{\delta}\,\partial_t\varphi\,\di x\di t
		-\int_\Omega \theta^{\rm in}(x)\,\varphi(0,x)\,\di x
		+\delta\int_0^T \mathcal A(\theta^{\delta}(t),\varphi(t))\,\di t \notag\\
		&\hspace{1cm} = \frac{\kappa}{2}\int_0^T\!\!\iint_{\Omega \times \Omega}
		\frac{\sin\!\big(\theta^{\delta}(t,y)-\theta^{\delta}(t,x)\big)}{|x-y|^{d+2s}}\,
		(\varphi(t,x)-\varphi(t,y))\,\di x\di y\,\di t. 
	\end{align*}
	That is, $\theta^\delta$ is a global weak solution to \eqref{C-18}.
\end{proof}
\subsection{Uniform estimates for $\{ \theta^ \delta \}_{\delta}$.}\label{sec:3.4}
\medskip
Recall that the limit $\theta^{\delta}$ satisfies
\begin{align} \label{C-27}
	\operatorname*{ess\,sup}_{x\in \Omega} \theta^{\delta}(t,\cdot)\le \operatorname*{ess\,sup}_{x\in \Omega} \theta^{\mathrm{in}},\qquad
	\operatorname*{ess\,inf}_{x\in \Omega} \theta^{\delta}(t,\cdot)\ge \operatorname*{ess\,inf}_{x\in \Omega} \theta^{\mathrm{in}},
\end{align}
and in particular
\begin{equation}\label{C-28}
	{\mathcal D}[\theta^{\delta}(t)] \le {\mathcal D}[\theta^{\mathrm{in}}] <\pi,
	\qquad t\ge0.
\end{equation}
In the sequel, we now establish the uniform estimates for $\{ \theta^\delta \}$ independent of $\delta$. \newline

Similar to \eqref{B-3-1}, we define:
\begin{equation} \label{C-28-1}
\begin{cases} 
\displaystyle {\mathcal E}_{P}[\theta^{\delta}] :=\frac{\kappa}{2}\iint_{\Omega\times\Omega}
\psi(x,y)\,\big(1-\cos(\theta^\delta(x)-\theta^\delta(y))\big) \di x \di y, \\[3ex]
\displaystyle \mathcal{E}_K[\theta^\delta] := \frac{\delta}{4} \iint_{\Omega \times \Omega} \psi(x,y) (\theta^\delta (x) - \theta^\delta (y))^2 \di x \di y, \\[3ex]
\displaystyle \mathcal{E}[\theta^\delta] : = \mathcal{E}_P[\theta^\delta] + \mathcal{E}_K[\theta^\delta].
\end{cases}
\end{equation}
Compared to previous subsection, we need to use potential energy ${\mathcal E}_{P}[\theta^{\delta}] $ and \eqref{C-28} to obtain the control of the ${H^s(\Omega)}$-norm.

\begin{lemma} \label{L3.4}
Let $\theta^\delta$ be a solution to \eqref{C-18} satisfying \eqref{C-16-1}. Then $\theta^\delta$ satisfies the energy identity:
\begin{equation} \label{C-29}
\mathcal{E}[\theta^\delta(t)] + \int_0^t \|\partial_{\tau}\theta^\delta(\tau)\|_{L^2(\Omega)}^2 \di \tau	= \mathcal{E}[\theta^{\mathrm{in}}], \quad t > 0. 
 \end{equation}
\end{lemma}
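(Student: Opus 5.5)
The plan is to derive \eqref{C-29} directly at the level $\theta^\delta$, rather than pass to the limit in \eqref{C-10}. The reason is that the limit $\varepsilon_j\to0$ in \eqref{C-10} only gives an inequality: the dissipation $\int_0^t\|\partial_\tau\theta^{\varepsilon_j,\delta}\|_{L^2}^2$ and $\mathcal E_K$ are merely weakly lower semicontinuous. The limit is still useful for regularity. From \eqref{C-10}, \eqref{C-13-1} and weak lower semicontinuity I will first record
\[
\partial_t\theta^\delta\in L^2([0,T);L^2(\Omega)),\qquad \theta^\delta\in L^\infty([0,T);H^s(\Omega)),
\]
with bounds depending only on $\kappa,\delta$ and $\|\theta^{\rm in}\|_{\dot H^s}$. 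Thus $\theta^\delta\in H^1(0,T;L^2(\Omega))$, and in particular $\theta^\delta\in C([0,T];L^2(\Omega))$. Next, by Lemma~\ref{L3.3} the equation \eqref{C-18} holds in $(H^s(\Omega))^*$ for a.e.\ $t$ in the form
\[
\partial_t\theta^\delta=-D\mathcal E[\theta^\delta],\qquad
\langle D\mathcal E[u],v\rangle=\frac{\kappa}{2}\iint\psi\sin(u(x)-u(y))(v(x)-v(y))+\delta\,\mathcal A(u,v).
\]
Since the left-hand side lies in $L^2$, so does $\xi(t):=D\mathcal E[\theta^\delta(t)]$, and $\xi\in L^2([0,T);L^2(\Omega))$.

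The identity itself will follow from a chain rule of Brezis type. I will split $\mathcal E=\Phi_1-\Phi_2$ into two convex, lower semicontinuous functionals on $L^2(\Omega)$, taking the value $+\infty$ off $H^s$:
\[
\Phi_1(u):=\frac{\kappa+\delta}{4}\|u\|_{\dot H^s}^2,\qquad \Phi_2(u):=\frac{\kappa}{2}\iint\psi(x,y)\,g(u(x)-u(y))\,\di x\di y,\qquad g(a):=\frac{a^2}{2}-(1-\cos a).
\]
Here $g''=1-\cos a\ge0$, so $\Phi_2$ is convex, and its derivative involves $g'(a)=a-\sin a$. Brezis' lemma states the following: if $u\in H^1(0,T;L^2)$, $\xi_i\in L^2(0,T;L^2)$ and $\xi_i\in\partial\Phi_i(u)$ a.e., then $t\mapsto\Phi_i(u(t))$ is absolutely continuous and $\frac{\di}{\di t}\Phi_i(u)=(\xi_i,\partial_tu)$. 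Applying it to each $\Phi_i$ and subtracting gives
\[
\frac{\di}{\di t}\mathcal E[\theta^\delta]=(\xi,\partial_t\theta^\delta)=-\|\partial_t\theta^\delta\|_{L^2}^2 .
\]
Integrating on $(0,t)$ and using $\theta^\delta(0)=\theta^{\rm in}$ then yields \eqref{C-29}.

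The main obstacle is showing that each subgradient $\xi_1,\xi_2$ lies separately in $L^2([0,T);L^2)$; so far only their difference $\xi$ is known to. For $\xi_2$ I would exploit the bounds \eqref{C-27}--\eqref{C-28}. Since $|g'(a)|\le |a|^3/6$ and $|\theta^\delta(x)-\theta^\delta(y)|\le M<\pi$, the nonlinear part is pointwise dominated by a fixed multiple of the linear part. I would then try to transfer the $L^2$ bound from $\xi$ to $\xi_2$ by testing against truncations, using the monotonicity of $\sin a/a$ on $[0,\pi)$ as in Lemma~\ref{L3.1}. If that fails, the fallback is a direct difference-quotient argument. Because $\mathcal E$ has the remainder bound
\[
|\mathcal E[v]-\mathcal E[u]-\langle D\mathcal E[u],v-u\rangle|\le C\|v-u\|_{\dot H^s}^2 ,
\]
which comes from $|(1-\cos)''|\le1$ and the exact quadratic structure of $\mathcal E_K$, one can compare $\mathcal E[\theta^\delta(t+h)]-\mathcal E[\theta^\delta(t)]$ from both sides with $h\,(\xi(t),\partial_t^h\theta^\delta(t))$. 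It would then remain to show that the remainder $\sum\|\theta^\delta(t_{k+1})-\theta^\delta(t_k)\|_{\dot H^s}^2$ vanishes along refinements. For this I would use the $L^\infty(H^s)$ bound together with a strong $H^s$-continuity argument in time, obtained from the energy inequality and weak continuity.
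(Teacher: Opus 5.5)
Your route differs from the paper's. The paper simply repeats the formal computation of Lemma~\ref{L3.2} along $\theta^\delta$: it differentiates $\mathcal{E}_P[\theta^\delta(t)]$ and $\mathcal{E}_K[\theta^\delta(t)]$, symmetrizes, and substitutes \eqref{C-18}, so it takes the chain rule for granted. You are right that this chain rule is the whole difficulty. Your preliminary regularity claims are sound: $\partial_t\theta^\delta\in L^2(0,T;L^2)$ and $\theta^\delta\in L^\infty(0,T;H^s)$ by lower semicontinuity. One minor point: Lemma~\ref{L3.3} only tests with $C_c^\infty$ functions, which are not dense in $H^s(\Omega)$ for $s>1/2$. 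To identify $\partial_t\theta^\delta=-D\mathcal{E}[\theta^\delta]$ in $(H^s(\Omega))^*$ you must redo the limit with test functions Lipschitz up to $\partial\Omega$.

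The genuine gap is at the decisive step. Brezis' lemma requires $\xi_1=(\kappa+\delta)A\theta^\delta$ and $\xi_2$ to lie \emph{separately} in $L^2(0,T;L^2(\Omega))$. That means $\theta^\delta\in L^2(0,T;D(A))$, an $H^{2s}$-type bound that neither your argument nor the paper provides. The suggested transfer through $|g'(a)|\le M^2|a|/6$ cannot give it, for two reasons. First, $\xi_1(x)$ is a principal value that is finite only by cancellation: for $s\ge 1/2$, $\int_\Omega\psi(x,y)|\theta(x)-\theta(y)|\,\di y=\infty$ already for smooth $\theta$ with $\nabla\theta(x)\neq 0$. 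So pointwise domination of integrands says nothing about $L^2$ norms. Second, only $\xi_1-\xi_2$, not $\xi_1$, is known to be in $L^2$. The most the cubic bound yields is $|\xi_2(x)|\le\frac{\kappa M}{6}\int_\Omega\psi(x,y)(\theta^\delta(x)-\theta^\delta(y))^2\,\di y$, which is merely in $L^1(\Omega)$. ``Testing against truncations'' does not address this.

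The fallback has the same hole. Your remainder bound closes only if $h^{-1}\int_0^{T-h}\|\theta^\delta(t+h)-\theta^\delta(t)\|_{\dot H^s}^2\,\di t\to 0$, which is a vanishing quadratic variation in $\dot H^s$. Strong continuity in $H^s$ does not imply this; the standard way to get it is again interpolation between $H^1(0,T;L^2)$ and $L^2(0,T;D(A))$.

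Moreover, strong $H^s$-continuity is not itself available. Passing to the limit in \eqref{C-10} gives only a one-sided inequality anchored at $t=0$. Since $\mathcal{E}_P$ and $\mathcal{E}_K$ are weakly lower semicontinuous on $H^s$ and $\theta^\delta$ is weakly continuous there, strong continuity at $t$ is equivalent to continuity of $\tau\mapsto\mathcal{E}[\theta^\delta(\tau)]$ at $t$. That is what you are trying to prove, so this step is circular.

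This is not a routine omission. When $M>\pi/2$ and $\delta<\kappa|\cos M|$, the coefficient $\delta+\kappa\cos(\theta^\delta(x)-\theta^\delta(y))$ in the second variation of $\mathcal{E}$ need not be positive. So neither $\lambda$-convexity of $\mathcal{E}$ on $L^2$ nor a coercive estimate in $L^2(0,T;D(A))$ is available off the shelf.

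What your first paragraph does establish rigorously is the inequality $\mathcal{E}[\theta^\delta(t)]+\int_0^t\|\partial_\tau\theta^\delta\|_{L^2}^2\,\di\tau\le\mathcal{E}[\theta^{\rm in}]$. This follows from Fatou for $\mathcal{E}_{P,\varepsilon_j}[\theta^{\varepsilon_j,\delta}(t)]$, weak lower semicontinuity of $\mathcal{E}_K$ and of the dissipation, and monotone convergence $\mathcal{E}_{P,\varepsilon}[\theta^{\rm in}]\uparrow\mathcal{E}_P[\theta^{\rm in}]$. That inequality is the only form of \eqref{C-29} used later, in \eqref{Newnew3.46}. The identity as stated, however, remains unproved by your plan.
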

\begin{proof}
Basically, we use the same arguments as in the proof of Lemma \ref{L3.2}. For this, we differentiate ${\mathcal E}_{P}[\theta^\delta(t)]$ with respect to $t$ to get 
\[
	\frac{\di}{\di t} {\mathcal E}_{P}[\theta^\delta(t)]
	=\frac{\kappa}{2}\iint_{\Omega\times\Omega} \psi(x,y)\,
	\sin(\theta^\delta(x)-\theta^\delta(y))\,
	(\partial_t\theta^\delta(x)-\partial_t\theta^\delta(y)) \di x \di y.
\]
Using the symmetry $\psi(x,y)=\psi(y,x)$ and exchanging
$x$ and $y$ on the right-hand side, we obtain
\begin{align*}
	\frac{\di}{\di t} {\mathcal E}_{P}[\theta^\delta(t)]
	=\int_\Omega \partial_t\theta^\delta(x)
	\left[\kappa\int_\Omega \psi(x,y)\sin(\theta^\delta(x)-\theta^\delta(y)) \di y\right]\di x.
\end{align*}
Similarly, we have 
\[
	\frac{\di}{\di t} {\mathcal E}_{K} [\theta^\delta(t)]
	=\int_\Omega \partial_t\theta^\delta(x)
	\left[\delta\int_\Omega \psi(x,y)(\theta^\delta(x)-\theta^\delta(y)) \di y\right]\di x.
\]
Since 
\[ \sin(\theta^\delta(x)-\theta^\delta(y))=-\sin(\theta^\delta(y)-\theta^\delta(x)), \]
we have
\[
\partial_t\theta^\delta(x)
= -\kappa \int_\Omega \psi(x,y)\sin(\theta^\delta(x)-\theta^\delta(y))\,\di y-\delta\int_\Omega \psi(x,y)(\theta^\delta(x)-\theta^\delta(y))\,\di y.
\]
Therefore, we obtain 
\[
\frac{\di}{\di t} {\mathcal E}_{P}[\theta^\delta(t)] + \frac{\di}{\di t} {\mathcal E}_{K}[\theta^\delta(t)]
= -\int_\Omega |\partial_t\theta^\delta(x)|^2\,\di x =-\|\partial_t\theta^\delta(t)\|_{L^2(\Omega)}^2.
\]
We integrate the above equality over $(0,t)$ to find the desired estimate \eqref{C-29}:
\begin{align} \label{B.1}
	{\mathcal E}_{P}[\theta^\delta(t)] +{\mathcal E}_{K}[\theta^\delta(t)]
	+\int_0^t \|\partial_\tau\theta^\delta(\tau)\|_{L^2(\Omega)}^2\,\di \tau
	= {\mathcal E}_{P}[\theta^{\rm in}] + {\mathcal E}_{K}[\theta^{\rm in}], \quad t\ge0.
\end{align}
\end{proof}

\vspace{0.2cm}

\noindent Using the same argument as in \eqref{C-12} and \eqref{C-13}, we can derive the following estimates:
\begin{align*}
{\mathcal E}_{P}[\theta^{\rm in}] \leq  \frac{\kappa}{4} \|\theta^{\rm in} \|^2_{\dot{H}_s(\Omega)}, \quad {\mathcal E}_{K}[\theta^{\rm in}]  \leq  \frac{\delta}{4} \|\theta^{\rm in} \|_{\dot{H}_s(\Omega)}^2.
\end{align*}
Therefore, the relation \eqref{B.1} yields
\begin{align}\label{Newnew3.46}
{\mathcal E}_{P}[\theta^\delta(t)] + {\mathcal E}_{K}[\theta^\delta(t)] \leq {\mathcal E}_{P}[\theta^{\rm in}] + {\mathcal E}_{K} [\theta^{\rm in}] \leq \frac{\kappa+\delta}{4} \|\theta^{\rm in} \|_{\dot{H}_s(\Omega)}^2.
\end{align}

Next, we claim that 
\begin{equation} \label{C-29-1}
\|\theta^{\delta}(t) \|_{\dot{H}_s(\Omega)}^2=\iint_{\Omega \times \Omega }\frac{(\theta^{\delta}(t,y) - \theta^{\delta}(t,x))^2}{|x-y|^{d+2s}} \, \di y \di x \le C .
\end{equation}
{\it Proof of \eqref{C-29-1}}:~For ${\mathcal D}[\theta^{\mathrm{in}}]= M\in[0,\pi)$, and $x\in[0,M]$ we have 
\begin{equation} \label{C-29-2}
	\frac{x}{\sin x}\le \frac{M}{\sin M}.
\end{equation}
Next, we derive the following estimates one by one.
\begin{align}
\begin{aligned} \label{C-29-3}
& (i)~\iint_{\Omega \times \Omega} \frac{\sin^2 (\theta^\delta(y) - \theta^\delta(x))}{|x-y|^{d+2s}} \, \di y \di x  \leq  \frac{\kappa+\delta}{\kappa} \|\theta^\mathrm{in} \|_{\dot{H}^s(\Omega)}^2. \\
& (ii)~\iint_{\Omega \times \Omega} \frac{(\theta^{\delta}(t,y) - \theta^{\delta}(t,x))^2}{|x-y|^{d+2s}} \, \di y \di x \leq  \left(\frac{M}{\sin M}\right)^2 \frac{\kappa+\delta}{\kappa} \|\theta^\mathrm{in} \|_{\dot{H}^s(\Omega)}^2.
\end{aligned}
\end{align}
\noindent $\diamond$~Case A (Derivation of the first inequality in \eqref{C-29-3}): Now, we use  \eqref{Newnew3.46} to see
\begin{align*}
	\mathcal{E}_{P}[\theta^\delta (t)] &= \frac{\kappa}{2}\iint_{\Omega \times \Omega} \frac{1 - \cos(\theta^\delta (y) - \theta^\delta (x))}{|x-y|^{d+2s}} \, \di y \di x \\
	&= \kappa \iint_{\Omega \times \Omega} \frac{\sin^2\frac{\theta^\delta(y) - \theta^\delta (x)}{2}}{|x-y|^{d+2s}} \, \di y \di x \leq 	\frac{\kappa+\delta}{4} \|\theta^\mathrm{in} \|_{\dot{H}^s(\Omega)}^2.
\end{align*}
By direct calculations with the above estimates, one has 
\begin{align}
\begin{aligned} \label{C-29-4}
&\iint_{\Omega \times \Omega} \frac{\sin^2 (\theta^\delta(y) - \theta^\delta(x))}{|x-y|^{d+2s}} \, \di y \di x \\
& \hspace{1cm} = 4  \iint_{\Omega \times \Omega} \frac{\sin^2 \frac{\theta^\delta (y) - \theta^\delta (x)}{2} \cos^2 \frac{\theta^\delta (y) - \theta^\delta (x)}{2}}{|x-y|^{d+2s}} \, \di y \di x \\
& \hspace{1cm} \leq  4 \iint_{\Omega \times \Omega} \frac{\sin^2 \frac{\theta^\delta (y) - \theta^\delta (x)}{2} }{|x-y|^{d+2s}} \, \di y \di x =\frac{4}{\kappa}\mathcal{E}_{P}[\theta^\delta (t)] \leq 	\frac{\kappa+\delta}{\kappa} \|\theta^\mathrm{in} \|_{\dot{H}^s(\Omega)}^2.
\end{aligned}
\end{align}

\noindent $\diamond$~Case B (Derivation of the second inequality in \eqref{C-29-3}):~Again, we use \eqref{Newnew3.46} and \eqref{C-29-2} to obtain
\begin{align}\label{C-30}
	\begin{aligned}
		\|\theta^{\delta}(t) \|_{\dot{H}_s(\Omega)}^2&=\iint_{\Omega \times \Omega} \frac{(\theta^{\delta}(t,y) - \theta^{\delta}(t,x))^2}{|x-y|^{d+2s}} \, \di y \di x  \\
		& =	\iint_{\Omega \times \Omega} \frac{(\theta^{\delta}(t,y) - \theta^{\delta}(t,x))^2}{\sin^2 (\theta^{\delta}(t,y) - \theta^{\delta}(t,x))}\frac{\sin^2 (\theta^{\delta}(t,y) - \theta^{\delta}(t,x))}{|x-y|^{d+2s}} \, \di y \di x\\
		&\le  \left(\frac{M}{\sin M}\right)^2\iint_{\Omega \times \Omega} \frac{\sin^2 (\theta^{\delta}(t,y) - \theta^{\delta}(t,x))}{|x-y|^{d+2s}} \, \di y \di x\\
		& \le  \left(\frac{M}{\sin M}\right)^2 \frac{\kappa+\delta}{\kappa} \|\theta^\mathrm{in} \|_{\dot{H}^s(\Omega)}^2.
	\end{aligned}
\end{align}
This finishes the proof of \eqref{C-29-1}.

Next, we use the estimates in \eqref{C-29-3} to see that 
\begin{align} \label{C-31}
&\| \partial_t \theta^\delta (t) \|_{(H^s(\Omega))^*} = \sup_{\varphi \in H^s(\Omega)\atop\|\varphi\|_{H^s(\Omega)}\le1} \int_\Omega \partial_t \theta^\delta (x) \varphi (x)  \di x \nonumber\\
		&=  \sup_{\varphi \in H^s(\Omega)\atop\|\varphi\|_{H^s(\Omega)}\le1} \Bigg( \kappa \iint_{\Omega \times \Omega} \frac{\sin(\theta^\delta (y) - \theta^\delta (x))}{|x-y|^{d+2s}}  \varphi (x)  \di y \di x +\delta\iint_{\Omega \times \Omega} \frac{\theta^\delta (y) - \theta^\delta (x)}{|x-y|^{d+2s}}  \varphi (x)  \di y \di x \Bigg) \nonumber\\
		& =  \sup_{\varphi \in H^s(\Omega)\atop\|\varphi\|_{H^s(\Omega)}\le1} \Bigg( \frac{\kappa}{2} \iint_{\Omega \times \Omega} \frac{\sin(\theta^\delta (y) - \theta^\delta (x))}{|x-y|^{d+2s}}  (\varphi (x) - \varphi (y))  \di y \di x \nonumber\\
		 &\hspace{5.5cm}+ \frac{\delta}{2} \iint_{\Omega \times \Omega}\frac{\theta^\delta (y) - \theta^\delta (x)}{|x-y|^{d+2s}}  (\varphi (x) - \varphi (y)) \di y \di x \Bigg) \nonumber\\
		& \leq  \sup_{\varphi \in H^s(\Omega)\atop\|\varphi\|_{H^s(\Omega)}\le1} \| \varphi \|_{H^s(\Omega)}  \Bigg( \frac{\kappa}{2} \left( \iint_{\Omega \times \Omega} \frac{\sin^2 (\theta^\delta (y) - \theta^\delta(x))}{|x-y|^{d+2s}}  \di y \di x \right)^{\frac{1}{2}} \nonumber\\
		&\hspace{5.5cm}+ \frac{\delta}{2} \left( \iint_{\Omega \times \Omega}\frac{(\theta^\delta (y) - \theta^\delta (x))^2}{|x-y|^{d+2s}}  \di y \di x \right)^{\frac{1}{2}} \Bigg) \nonumber\\  
		& \leq  \frac{\kappa}{2} \left( \frac{\kappa+\delta}{\kappa} \|\theta^\mathrm{in} \|_{\dot{H}^s(\Omega)}^2 \right)^\frac{1}{2}  + \frac{\delta}{2} \frac{M}{\sin M} \left( \frac{\kappa+\delta}{\kappa} \|\theta^\mathrm{in} \|_{\dot{H}^s(\Omega)}^2 \right)^\frac{1}{2} \\
		& =  \left( \frac{\kappa }{2} + \frac{\delta}{2} \frac{M}{\sin M} \right) \sqrt{\frac{\kappa+\delta}{\delta} } \|\theta^{\rm in} \|_{\dot{H}_s(\Omega)}.\nonumber
\end{align}
Again, zero average phase, the bounds on the essential infimum and essential supremum of $\theta^\delta$ in \eqref{C-27} immediately imply
\begin{align} \label{C-32}
	\|\theta^{\delta}(t)\|_{L^\infty(\Omega)}^2\le C.
	\end{align}
Since the domain $\Omega$ is bounded, this also yields a uniform bound for $\theta^\delta(t)$ in the $L^2$-norm.	 With the above estimates \eqref{C-30}, \eqref{C-31}, and \eqref{C-32}, we again are ready to use Aubin--Lions lemma \ref{L2.1} in the next subsection.

\subsection{Passage to the limit for $\{ \theta^{\delta} \}$.}\label{sec:3.5}
In this subsection, we show that the sequence  $\{ \theta^{\delta} \}$ has a convergent sequence and its limit is in fact the global weak solution to \eqref{A-2}. Note that we have the embeddings
\[
H^s(\Omega)\hookrightarrow \hookrightarrow  L^2(\Omega)\hookrightarrow (H^{s}(\Omega))^\ast.
\]Applying Aubin--Lions lemma \ref{L2.1} once more with \eqref{C-30}, \eqref{C-31}, \eqref{C-32}, there exists a subsequence $\delta_j\to0$ and
\[
\theta\in L^2([0,T);H^s(\Omega))\cap C([0,T);L^2(\Omega))
\]
such that
\begin{equation*}
	\theta^{\delta_j}\to \theta \quad\text{strongly in } C([0,T);L^2(\Omega)),
	\qquad
	\theta^{\delta_j}\rightharpoonup \theta \quad\text{weakly in } L^2([0,T);H^s(\Omega)).
\end{equation*}
Moreover, we have
\begin{equation*}
	\operatorname*{ess\,sup}_{x\in \Omega} \theta(t,\cdot)\le \operatorname*{ess\,sup}_{x\in \Omega} \theta^{\mathrm{in}},\qquad
	\operatorname*{ess\,inf}_{x\in \Omega} \theta(t,\cdot)\ge \operatorname*{ess\,inf}_{x\in \Omega} \theta^{\mathrm{in}}.
\end{equation*}
By Proposition \ref{PA.3} we have 
\begin{equation*}
	{\mathcal D}[\theta(t)] \le {\mathcal D}[\theta^{\mathrm{in}}] <\pi,
	\qquad t\ge0.
\end{equation*}
Next, we show that the subsequence $\theta^{\delta_j}$ converges to the solution to \eqref{A-2}.
\begin{lemma}\label{L3.5}
	Let $T>0$ be fixed. For each $\delta>0$, let $\theta^\delta$ be a weak solution to \eqref{C-18}, and suppose that the following uniform in $\delta$ bound holds: 
	\begin{equation}\label{C-33}
		\sup_{\delta\in(0,1)}\Big(
		\|\theta^\delta\|_{L^\infty([0,T); L^2(\Omega))}
		+\|\theta^\delta\|_{L^2([0,T);H^s(\Omega))}
		+\|\partial_t\theta^\delta\|_{L^2([0,T);(H^{s}(\Omega))^\ast)}
		\Big)\le C.
	\end{equation}
	Then, there exists a weak solution $\theta$ to the following Cauchy problem:
	\begin{equation}\label{C-34}
	\begin{cases}
	\displaystyle \partial_t\theta(t,x)
		=\kappa\int_\Omega \frac{\sin\big(\theta(t,y)-\theta(t,x)\big)}{|x-y|^{d+2s}}\,\di y, \quad t > 0,\quad x\in\Omega, \\
	\displaystyle \theta \Big|_{t = 0} =\theta^{\mathrm{in}}.
	\end{cases}
	\end{equation}
	\end{lemma}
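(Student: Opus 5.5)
The plan is to mirror the proof of Lemma \ref{L3.3}, now sending $\delta\to0$ instead of $\varepsilon\to0$.

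\emph{Step 1: compactness.} By \eqref{C-33} and the embeddings $H^s(\Omega)\hookrightarrow\hookrightarrow L^2(\Omega)\hookrightarrow (H^s(\Omega))^*$, the Aubin--Lions Lemma \ref{L2.1} together with Banach--Alaoglu gives a subsequence $\delta_j\to0$ and a limit $\theta$. Along this subsequence
\[
\theta^{\delta_j}\to\theta \ \text{strongly in } C([0,T);L^2(\Omega)), \qquad \theta^{\delta_j}\rightharpoonup\theta \ \text{weakly in } L^2([0,T);H^s(\Omega)).
\]
After a further extraction we may also assume that $\theta^{\delta_j}\to\theta$ for a.e.\ $(t,x)$.

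\emph{Step 2: symmetrized weak formulation.} Fix $\varphi\in C_c^\infty([0,T)\times\Omega)$. As in \eqref{C-22}, the weak formulation of \eqref{C-18} for $\theta^{\delta_j}$ reads
\begin{align*}
&-\int_0^T\!\!\int_\Omega \theta^{\delta_j}\partial_t\varphi\,\di x\di t-\int_\Omega\theta^{\rm in}\varphi(0)\,\di x+\delta_j\int_0^T\mathcal A(\theta^{\delta_j}(t),\varphi(t))\,\di t\\
&\qquad=\frac{\kappa}{2}\int_0^T\!\!\iint_{\Omega\times\Omega}\frac{\sin(\theta^{\delta_j}(t,y)-\theta^{\delta_j}(t,x))}{|x-y|^{d+2s}}(\varphi(t,x)-\varphi(t,y))\,\di x\di y\di t.
\end{align*}
We pass to the limit term by term.

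\emph{Step 3: linear terms.} The time-derivative term converges by the strong $L^2$ convergence. For the dissipative term, the bound $|\mathcal A(u,\varphi)|\le\frac12\|u\|_{H^s}\|\varphi\|_{H^s}$ and \eqref{C-33} give
\[
\Big|\delta_j\int_0^T\mathcal A(\theta^{\delta_j},\varphi)\,\di t\Big|\le \frac{\delta_j}{2}\, C\,\|\varphi\|_{L^2(0,T;H^s(\Omega))}\to0,
\]
so this term vanishes in the limit, unlike in Lemma \ref{L3.3}, where it persisted.

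\emph{Step 4: the nonlinear term (main obstacle).} This is the step where the non-integrability of the kernel must be handled. Fix $\rho\in(0,1)$ and split $\Omega\times\Omega$ into the far field $\{|x-y|>\rho\}$ and the near field $\{|x-y|\le\rho\}$.

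On the far field the kernel is bounded by $\rho^{-d-2s}$, $|\sin|\le1$ and $\varphi$ is bounded. Since $\sin(\theta^{\delta_j}(y)-\theta^{\delta_j}(x))\to\sin(\theta(y)-\theta(x))$ for a.e.\ $(t,x,y)$, dominated convergence gives convergence of this part.

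On the near field we use $|\sin a|\le|a|$, $|\varphi(x)-\varphi(y)|\le\|\nabla\varphi\|_\infty|x-y|$ and Cauchy--Schwarz, exactly as in \eqref{C-25}. This yields
\[
\int_0^T|R^{\le\rho}_{\delta_j}(t)|\,\di t\le C\rho^{1-s},
\]
uniformly in $j$, because $\|\theta^{\delta_j}(t)\|_{\dot H^s(\Omega)}$ is bounded in $L^2(0,T)$ by \eqref{C-33}. The same estimate holds for the limit $\theta$, since $\theta\in L^2([0,T);H^s(\Omega))$ by weak lower semicontinuity. Letting first $j\to\infty$ and then $\rho\to0$ shows that the nonlinear term converges to the same expression with $\theta$ in place of $\theta^{\delta_j}$.

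\emph{Step 5: conclusion.} Combining Steps 3 and 4, $\theta$ satisfies
\[
-\int_0^T\!\!\int_\Omega\theta\,\partial_t\varphi\,\di x\di t-\int_\Omega\theta^{\rm in}\varphi(0)\,\di x=\frac{\kappa}{2}\int_0^T\!\!\iint_{\Omega\times\Omega}\frac{\sin(\theta(t,y)-\theta(t,x))}{|x-y|^{d+2s}}(\varphi(t,x)-\varphi(t,y))\,\di x\di y\di t.
\]
This symmetrized form is the meaning of the singular integral in Definition \ref{D1.1}: unsymmetrizing via the principal value recovers the form stated there. Hence $\theta$ is a weak solution of \eqref{C-34}. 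Finally, a diagonal argument over $T\in\mathbb N$ makes the solution global.
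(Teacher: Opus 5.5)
Your proposal is correct and follows the paper's proof in Appendix B essentially step for step. The shared steps are the symmetrized weak formulation, the vanishing of the $\delta$-diffusion term via $|\mathcal A(u,\varphi)|\le\frac12\|u\|_{H^s(\Omega)}\|\varphi\|_{H^s(\Omega)}$, and the far/near-field split with dominated convergence plus the uniform $C\rho^{1-s}$ bound, which also holds for $\theta$. You run the near-field estimate directly from the $L^2([0,T);H^s(\Omega))$ bound in \eqref{C-33} rather than the pointwise bound \eqref{C-30}, which keeps the argument within the lemma's hypotheses. Also, only strong convergence in $L^2((0,T)\times\Omega)$ is actually needed here, and Aubin--Lions with $p=2$ gives that from \eqref{C-33}.
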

\begin{proof}
	By Aubin--Lions lemma \ref{L2.1} and assumption \eqref{C-33}, there exists a subsequence $\delta_j\to0$ and
	\[
	\theta\in L^2([0,T);H^s(\Omega))\cap C([0,T);L^2(\Omega))
	\]
	such that
	\begin{equation}\label{C-35}
		\theta^{\delta_j}\to\theta\quad \text{strongly in }C([0,T);L^2(\Omega)),\qquad
		\theta^{\delta_j}\rightharpoonup\theta\quad \text{weakly in }L^2([0,T);H^s(\Omega)).
	\end{equation}
	Since the proof is similar to Lemma \ref{L3.3}, we postpone it to Appendix \ref{App-B}. 
\end{proof}
Finally, we combine Lemma \ref{L3.3} and Lemma \ref{L3.5} to complete the proof of Theorem \ref{T2.1}.

\vspace{0.5cm}

\section{Proof of Theorem \ref{T2.2}}\label{sec:4}
\setcounter{equation}{0}
In this section, we study the long-time behavior of the weak solution to \eqref{A-2}. We split the proof of Theorem \ref{T2.2} into three steps: \vspace{0.1cm}
\begin{itemize}
\item
First, we use \eqref{B-1} to get the energy identity \eqref{C-38}. 
\vspace{0.1cm}
\item
Second, we use the fact that  $\sin x/ x$ is nonincreasing on $[0,\pi)$ to obtain \eqref{C-39}.
\vspace{0.1cm}
\item
Finally, we combine fractional Poincar\'e inequality and Gr\"onwall's inequality to find the desired estimates. 
\end{itemize}
In what follows, we perform the above procedures one by one. \newline

\noindent$\diamond$	\textbf{Step A} (Derivation of energy identity in $L^2$):	We set
	\begin{equation} \label{C-36}
	\tilde{\theta}(t,x):=\theta(t,x)-\bar{\theta}.
	\end{equation}
	Then, it is easy to see that 
\[ \int_\Omega \tilde{\theta}(t,x) \di x=0\ \ \text{for all }t\ge0. \]
Then, one has 
	\begin{align}\label{C-38}
	\frac12\frac{\di}{\di t}\| \tilde{\theta}(t)\|_{L^2(\Omega)}^2
	=\frac{\kappa}{2}\iint_{\Omega\times \Omega}
	\frac{\sin(\theta(t,y)-\theta(t,x))\big(\theta(t,x)-\theta(t,y)\big)}{|x-y|^{d+2s}} \di y\di x.
	\end{align}
	
	\vspace{0.2cm}
	
	\noindent$\diamond$	\textbf{Step B} (Rough monotonicity estimate):~By \eqref{C-36} and Theorem \ref{T2.1}, for all $t\ge0$ and a.e.\ $x,y\in\Omega$, it holds
	\[
	|\theta(t,x)-\theta(t,y)|\le M<\pi.
	\]
	Recall that 
	\[
	c_M:=\inf_{|z|\le M}\frac{\sin z}{z}=\frac{\sin M}{M}>0.
	\]
	Then for all $|z|\le M$ we have 
	\[ z\sin z\ge c_M z^2. \]
	 Applying this together with
	$z=\theta(t,x)-\theta(t,y)$ in \eqref{C-38}, one has 
	\begin{align}\label{C-39}
		\begin{aligned}
		\frac12\frac{\di}{\di t}\| \tilde{\theta}(t)\|_{L^2(\Omega)}^2 
		& \le -\frac{\kappa c_M}{2}\iint_{\Omega\times \Omega}
		\frac{(\theta(t,x)-\theta(t,y))^2}{|x-y|^{d+2s}} \di x\di y \\
		&= -\frac{\kappa c_M}{2}\iint_{\Omega\times \Omega}
		\frac{(\tilde{\theta}(t,x)-\tilde{\theta} (t,y))^2}{|x-y|^{d+2s}} \di x\di y.
		\end{aligned}
	\end{align}
	\vspace{0.2cm}
	\noindent$\diamond$	\textbf{Step C} (Deriving exponential relaxation): We use fractional Poincar\'e inequality and Gr\"onwall's inequality to derive the desired exponential decay estimate from \eqref{C-39}. Since $\int_\Omega \tilde{\theta}(t)=0$ for all $t$, the fractional Poincar\'e inequality implies the existence of
	$C_P=C_P(\Omega,d,s)>0$ such that for a.e.\ $t$,
	\begin{equation}\label{C-40}
		\iint_{\Omega\times \Omega} \frac{(\tilde{\theta}(t,x)-\tilde{\theta}(t,y))^2}{|x-y|^{d+2s}} \di x\di y
		\ge C_P\,\| \tilde{\theta}(t)\|_{L^2(\Omega)}^2.
	\end{equation}
	Finally, we combine \eqref{C-39} and \eqref{C-40} to get 
	\[
	\frac{\di}{\di t}\| \tilde{\theta}(t)\|_{L^2(\Omega)}^2
	\le -\kappa c_M C_P \| \tilde{\theta}(t)\|_{L^2(\Omega)}^2.
	\]
	By Gr\"onwall's lemma, we obtain the desired estimate:
	\[
	\| \tilde{\theta}(t)\|_{L^2(\Omega)}^2 \le e^{-\kappa c_M C_P t}\| \tilde{\theta}(0)\|_{L^2(\Omega)}^2
	= e^{-\kappa c_M C_P t}\|\theta^{\rm in}-\bar{\theta}\|_{L^2(\Omega)}^2.
	\]
	This completes the proof of Theorem \ref{T2.2}.

\vspace{0.5cm}

\section{Conclusion}\label{sec:5}
\setcounter{equation}{0}
In this paper, we have investigated the exponential relaxation of the continuum Kuramoto model with a non-integrable nonlocal spatial kernel for a constant natural frequency function, which introduces substantial analytical difficulties. In particular, the lack of integrability destroys relaxation estimates based on the classical diameter control techniques that play a crucial role in the analysis of synchronization phenomena as it is.
To overcome these apparent difficulties, we have introduced a double regularization procedure, which allowed us to construct global weak solutions to the continuum model. Furthermore, we established a diameter estimate via a novel truncation technique and verified that the phase diameter of solutions is nonincreasing provided that the initial phase diameter is smaller than $\pi$. Finally, for a constant natural frequency function, we rigorously showed that the solution converges exponentially to the initial mean phase in $L^2$-norm. Our result provided a rigorous mathematical justification for synchronization in the presence of nonlocal non-integrable spatial interactions. Of course, there are several interesting problems to be investigated further. To name a few, first, it would also be interesting to study the pointwise convergence of solutions and to characterize asymptotic synchronization profile. Second, the uniqueness of weak solution under non-integrable nonlocal spatial kernel still needs to be further explored. Third, the well-posedness and emergent behaviors of the continuum Kuramoto model with a non-integrable kernel under heterogeneous frequencies are challenging and relevant directions for a future research. We leave these interesting questions for a follow-up work.

\section*{Conflict of interest statement}
The authors declare no conflicts of interest.

\vspace{0.5cm}

\section*{Data availability statement}
The data supporting the findings of this study is available from the corresponding author upon reasonable request.

\section*{Ethical statement}
The authors declare that this manuscript is original, has not been published before, and is not currently being considered for publication elsewhere. The study was conducted by the principles of academic integrity and ethical research practices. All sources and contributions from others have been properly acknowledged and cited. The authors confirm that there is no fabrication, falsification, plagiarism, or inappropriate manipulation of data in the manuscript.
\appendix

\vspace{0.5cm}

\section{A global existence of doubly regularized problem}\label{App-A}
\setcounter{equation}{0}
In this appendix, we study a global solvability to the Cauchy problem for the doubly regularized equation \eqref{B-3-0}. \newline

Fix $T>0$, $\varepsilon>0$, and $\delta\ge0$, we set 
	\[
	\psi_\varepsilon(x,y) :=\frac{1}{(|x-y|+\varepsilon)^{d+2s}},\qquad
	(G_\varepsilon(u))(x):=\int_\Omega \psi_\varepsilon(x,y)\sin\big(u(y)-u(x)\big) \di y.
	\]
	Let $A$ be the nonnegative self-adjoint operator defined on its domain $D(A) \subset H^s (\Omega) \subset L^2(\Omega)$ associated with the
	symmetric bilinear form:
	\[
	\mathcal {A}(u,v):=\frac12\iint_{\Omega\times\Omega}\frac{(u(x)-u(y))(v(x)-v(y))}{|x-y|^{d+2s}}\di x \di y,
	\]
	so that $Au$ is (the realization of) the regional fractional Laplacian $(-\Delta)^s_\Omega u$. In this setting, the Cauchy problem for the doubly regularized equation \eqref{B-3-0} can be rewritten as follows. 
	\begin{equation} \label{Ap-1}
	\begin{cases}
	\displaystyle \partial_t\theta^{\varepsilon,\delta}(t)+\delta A\theta^{\varepsilon,\delta}(t)
		=\kappa\,G_\varepsilon(\theta^{\varepsilon,\delta}(t)), \quad t > 0,~~x \in \Omega, \\[1em]
	\displaystyle \theta^{\varepsilon,\delta} \Big|_{t = 0} =\theta^{\mathrm{in}}.
	\end{cases}
	\end{equation}

 In the following two propositions, we study a global well-posedness of \eqref{Ap-1} and phase diameter of $(\theta^{\varepsilon,\delta})$. 
\begin{proposition}[Well-posedness of the $(\varepsilon,\delta)$-regularized problem]\label{PA.1}
The following assertions hold.
\begin{enumerate}
\item
For $\theta^{\mathrm{in}}\in L^2(\Omega)\cap H^{s}(\Omega)$, the Cauchy problem \eqref{Ap-1} admits a unique mild solution
	\[
	\theta^{\varepsilon,\delta}\in C([0,T);L^2(\Omega))\cap L^\infty( [0,T);H^s(\Omega)).
	\]
\item
For $\theta^{\mathrm{in}}\in L^\infty(\Omega)\cap H^{s}(\Omega)$, the mild solution obtained in (1) satisfies the following regularity:
\[ \theta^{\varepsilon,\delta}\in C([0,T);L^\infty(\Omega))\cap L^\infty([0,T);H^s(\Omega)). \]
\end{enumerate}
\end{proposition}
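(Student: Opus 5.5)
We will treat \eqref{Ap-1} as a semilinear abstract Cauchy problem and use the mild (Duhamel) formulation
\[
\theta(t)=T_\delta(t)\theta^{\mathrm{in}}+\kappa\int_0^t T_\delta(t-\tau)\,G_\varepsilon(\theta(\tau))\,\di\tau,\qquad T_\delta(t):=e^{-t\delta A}.
\]
Here $(T_\delta(t))_{t\ge0}$ is the Markovian symmetric contraction semigroup from Section~\ref{sec:2.2}, so \eqref{NewL11} and \eqref{NewL12} hold; for $\delta=0$ it is the identity.

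\textbf{Lipschitz bounds for $G_\varepsilon$.} The key observation is that, for fixed $\varepsilon>0$, the kernel satisfies $0\le\psi_\varepsilon\le\varepsilon^{-d-2s}$, so it is bounded on $\Omega\times\Omega$. Set $K_\varepsilon:=\sup_x\int_\Omega\psi_\varepsilon(x,y)\,\di y\le \varepsilon^{-d-2s}|\Omega|$. Since $\sin$ is $1$-Lipschitz,
\[
|G_\varepsilon(u)(x)-G_\varepsilon(v)(x)|\le \int_\Omega\psi_\varepsilon(x,y)\big(|u(y)-v(y)|+|u(x)-v(x)|\big)\di y .
\]
Schur's test then gives $\|G_\varepsilon(u)-G_\varepsilon(v)\|_{L^p}\le 2K_\varepsilon\|u-v\|_{L^p}$ for $p=2$ and $p=\infty$. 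Also $\|G_\varepsilon(u)\|_{L^\infty}\le K_\varepsilon$.

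\textbf{Part (1): existence in $L^2$.} The Duhamel map is a contraction on $C([0,T_0];L^2(\Omega))$ for $T_0$ small, with $T_0$ depending only on $\kappa K_\varepsilon$. Because the Lipschitz constant is global, we can iterate on consecutive intervals, or use a weighted sup-norm $\sup_t e^{-\lambda t}\|\cdot\|_{L^2}$ with $\lambda>2\kappa K_\varepsilon$. This yields a unique global mild solution in $C([0,T);L^2)$.

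\textbf{Part (1): the $H^s$ bound.} This is the main obstacle, since $G_\varepsilon$ need not map $H^s$ into itself with a clean constant. I would first try estimating directly:
\[
G_\varepsilon(u)(x)-G_\varepsilon(u)(x')=\int_\Omega\big(\psi_\varepsilon(x,y)-\psi_\varepsilon(x',y)\big)\sin(u(y)-u(x))\,\di y+\int_\Omega\psi_\varepsilon(x',y)\big(\sin(u(y)-u(x))-\sin(u(y)-u(x'))\big)\di y .
\]
Since $\psi_\varepsilon$ is Lipschitz in $x$ with a constant $L_\varepsilon$, the first term is bounded by $L_\varepsilon|\Omega||x-x'|$. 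The second term is bounded by $K_\varepsilon|u(x)-u(x')|$. Hence
\[
\|G_\varepsilon(u)\|_{\dot H^s}^2\le C_\varepsilon\big(1+\|u\|_{\dot H^s}^2\big),
\]
using that $\iint|x-x'|^{2-d-2s}\,\di x\,\di x'<\infty$. Moreover $T_\delta(t)$ is a contraction in the form norm $\mathcal A(\cdot,\cdot)^{1/2}$: for $\delta>0$ this is spectral calculus since $T_\delta$ commutes with $A^{1/2}$, and for $\delta=0$ it is trivial. Applying this to Duhamel's formula and using Gronwall's inequality gives
\[
\|\theta(t)\|_{\dot H^s}\le \big(\|\theta^{\mathrm{in}}\|_{\dot H^s}+C_\varepsilon t\big)e^{C_\varepsilon t}
\]
on $[0,T)$, which is the claimed $L^\infty([0,T);H^s)$ regularity.

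\textbf{Part (2): $L^\infty$ regularity.} For $\theta^{\mathrm{in}}\in L^\infty$, we rerun the same fixed point in $C([0,T);L^\infty(\Omega))$, now using \eqref{NewL12} and the $L^\infty$-Lipschitz bound for $G_\varepsilon$. One subtlety is continuity in time in $L^\infty$, because $T_\delta$ need not be strongly continuous on $L^\infty$. If necessary, I would instead run the fixed point in $L^\infty([0,T);L^\infty)\cap C([0,T);L^2)$ and obtain $L^\infty$-continuity of the Duhamel integral term from the Lipschitz structure.

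Uniqueness in the $L^2$ class identifies this solution with the one from part (1). For the diameter continuity used in Lemma~\ref{L3.1}, the continuity of $t\mapsto\theta(t)$ in $L^\infty$ is what is needed.
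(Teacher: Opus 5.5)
\textbf{Part (1).} This part is sound. Your $L^2$ fixed point is the paper's Step C, and your Schur-test constant $2K_\varepsilon$ is a tidier version of its bound via $\sup_x\int_\Omega\psi_\varepsilon^2\,\di y$.

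For the $L^\infty([0,T);H^s(\Omega))$ regularity you take a genuinely different route. The paper does not prove it inside the proof. It reads it off, in the Remark that follows, from the energy bound \eqref{C-16} of Lemma~\ref{L3.2}. That bound is uniform in $\varepsilon$ and $t$, which is exactly what the later compactness step needs. However, it requires $\delta>0$, since it carries the factor $(\kappa+\delta)/\delta$, and the energy identity behind it must still be justified for the mild solution (e.g.\ by maximal $L^2$-regularity).

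Your estimate $\|G_\varepsilon(u)\|_{\dot H^s}\le C_\varepsilon(1+\|u\|_{\dot H^s})$, combined with contractivity of $e^{-t\delta A}$ in the form seminorm, is self-contained. It also covers $\delta=0$, which the appendix explicitly allows. Its constants blow up as $\varepsilon\to0$, though, so it complements Lemma~\ref{L3.2} rather than replacing it. One routine repair: Gronwall needs $\sup_t\|\theta(t)\|_{\dot H^s}<\infty$ a priori. Run the estimate on the Picard iterates and pass to the limit using lower semicontinuity of the Gagliardo seminorm under $L^2$ convergence.

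\textbf{Part (2): the genuine gap.} You correctly flag that $e^{-t\delta A}$ is not strongly continuous on $L^\infty(\Omega)$; the paper's Step D passes over this. But your fallback addresses the wrong term.

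The Duhamel integral is harmless. Since $\|G_\varepsilon(u)\|_{L^\infty}\le K_\varepsilon$ and $e^{-t\delta A}$ is an $L^\infty$-contraction, it is at most $\kappa K_\varepsilon t$ in $L^\infty$. The obstruction is the free term $e^{-t\delta A}\theta^{\mathrm{in}}$, about which the Lipschitz structure of $G_\varepsilon$ says nothing. For $\delta>0$ the claim actually fails. The mild formula gives
\[
\|\theta(t)-\theta^{\mathrm{in}}\|_{L^\infty(\Omega)}\ge \|e^{-t\delta A}\theta^{\mathrm{in}}-\theta^{\mathrm{in}}\|_{L^\infty(\Omega)}-\kappa K_\varepsilon t .
\]
The regional fractional Laplacian semigroup has a jointly continuous heat kernel, so $e^{-t\delta A}\theta^{\mathrm{in}}$ is continuous for every $t>0$.

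Take $\theta^{\mathrm{in}}=c\,\mathbf{1}_B$ with $B\Subset\Omega$ a ball, $0<c<\pi$ and $s<1/2$. Then $\theta^{\mathrm{in}}\in L^\infty\cap H^s$ and $\mathcal D[\theta^{\mathrm{in}}]<\pi$, but the first term on the right stays $\ge c/2$ for all $t>0$. Hence $\theta^{\varepsilon,\delta}\notin C([0,T);L^\infty(\Omega))$, and assertion (2) cannot be proved as stated.

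What your argument actually yields is $\theta\in C([0,T);L^2(\Omega))$ with $\|\theta(t)\|_{L^\infty}\le\|\theta^{\mathrm{in}}\|_{L^\infty}+\kappa K_\varepsilon t$. You get $C([0,T];L^\infty)$ only when $\delta=0$.

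Consequently, the continuity of $t\mapsto\mathcal D[\theta(t)]$ that you want for Lemma~\ref{L3.1} cannot come from this proposition and must be bypassed. One way: run the truncation argument with $\sin$ replaced by the odd Lipschitz function $z\mapsto\sin(\max\{-D,\min\{D,z\}\})$, where $D=\mathcal D[\theta^{\mathrm{in}}]$. This function has the sign of $z$ for every $z\in\bbr$, so no continuity-in-time argument is needed. Then invoke uniqueness to identify the modified solution with $\theta^{\varepsilon,\delta}$.
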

\begin{proof}
We split the proof into four steps:
\vspace{0.1cm}
\begin{itemize}
\item
First, we show that $G_\varepsilon$ is globally Lipschitz.
\vspace{0.1cm}
\item
Second, we recall the semigroup generated by regional fractional Laplacian. 
\vspace{0.1cm}
\item
Third, we show the existence and uniqueness of the solution in $L^2(\Omega)$ by fixed-point theorem.
\vspace{0.1cm}
\item
Finally, we show the existence and uniqueness of the solution in $L^{\infty}(\Omega)$. 
\end{itemize}
Now, we proceed with the proof as follows.\vspace{0.2cm}
	
\noindent$\diamond$ \textbf{Step A} ($G_\varepsilon$ is globally Lipschitz on $L^2(\Omega)$ and $L^\infty(\Omega)$):~We first claim that, for each fixed $\varepsilon>0$,
	\begin{equation}\label{A.1}
		K_\varepsilon := \sup_{x\in\Omega}\int_\Omega \psi_\varepsilon^2(x,y) \di y<\infty.
	\end{equation}
	Indeed, since $\Omega$ is bounded, for any $x\in\Omega$ we have
	\[
	\int_\Omega \frac{1}{(|x-y|+\varepsilon)^{2d+4s}} \di y
	\le \int_{\Omega}\frac{1}{\varepsilon^{2d+4s}} \di y
	= |\Omega| \varepsilon^{-2d-4s}.
	\]
	This proves \eqref{A.1}. Similarly, we obtain
	\begin{align} \label{A.2}
	K_\varepsilon^\ast := \sup_{x\in \Omega} \int_\Omega \psi_\varepsilon(x,y) \di y <\infty.
	\end{align}
	Let $u$ and $v$ be measurable functions. Then, we use
	\[ |\sin a-\sin b|\le |a-b| \]
	to see that  for a.e.\ $x\in\Omega$,
	\begin{align}
	\begin{aligned} \label{A.2-1}
		&|G_\varepsilon(u)(x)-G_\varepsilon(v)(x)| \\
		& \hspace{1cm} \le \int_\Omega \psi_\varepsilon(x,y)
		\Big|\sin\big(u(y)-u(x)\big)-\sin\big(v(y)-v(x)\big)\Big| \di y\\
		& \hspace{1cm} \le \int_\Omega \psi_\varepsilon(x,y)\Big(|u(y)-v(y)|+|u(x)-v(x)|\Big) \di y\\
		& \hspace{1cm} \le \int_\Omega \psi_\varepsilon(x,y)|u(y)-v(y)| \di y
		+\Big(\int_\Omega \psi_\varepsilon(x,y) \di y\Big)\,|u(x)-v(x)| \\
		& \hspace{1cm} =: {\mathcal I}_{11}(x) +  {\mathcal I}_{12}(x).
	\end{aligned}
	\end{align}
Then, it is easy to see that 
\begin{equation} \label{A.2-1-1}
|G_\varepsilon(u)(x)-G_\varepsilon(v)(x)|^2 \leq 2 \Big( |{\mathcal I}_{11}(x)|^2 +  |{\mathcal I}_{12}(x)|^2 \Big). 
\end{equation}
\noindent Below, we estimate the terms ${\mathcal I}_{1i},~i=1,2$ one by one. 	\newline

\noindent $\clubsuit$~(Estimate of ${\mathcal I}_{11})$: We use the Cauchy-Schwarz inequality to see that for $x \in \Omega$, 
\[
	|{\mathcal I}_{11}(x)| = \int_\Omega \psi_\varepsilon(x,y)|u(y)-v(y)|\di y \le \Big(\int_\Omega \psi^2_\varepsilon(x,y)\di y\Big)^{1/2}\|u-v\|_{L^2(\Omega)} \le (K_\varepsilon)^{1/2}\|u-v\|_{L^2(\Omega)}.
\]
	This and boundedness of $\Omega$ yield
\begin{equation} \label{A.2-2}
\int_\Omega  |{\mathcal I}_{11}(x)|^2 \di x \leq  |\Omega| K_\varepsilon \|u-v\|^2_{L^2(\Omega)}. 
\end{equation}
\noindent $\clubsuit$~(Estimate of ${\mathcal I}_{12})$:~Again, we use \eqref{A.2} to see 
\[
|{\mathcal I}_{12}(x) |^2\leq |K_\varepsilon^\ast|^2 \,|u(x)-v(x)|^2.
\]
This yields 
\begin{equation} \label{A.2-3}
\int_\Omega  |{\mathcal I}_{12}(x)|^2 \di x \leq   |K_\varepsilon^\ast|^2 \|u-v\|^2_{L^2(\Omega)}. 
\end{equation}
We combine \eqref{A.2-1-1}, \eqref{A.2-2} and \eqref{A.2-3} to obtain 
	\begin{align*}
	\begin{aligned}
		\|G_\varepsilon(u)-G_\varepsilon(v)\|_{L^2(\Omega)}^2
		&\le 2|\Omega|K_\varepsilon\|u-v\|^2_{L^2(\Omega)}
		+2(K_\varepsilon^\ast)^2 \|u-v\|^2_{L^2(\Omega)} \notag\\
		&\le C(\Omega)\,(K_\varepsilon+ (K_\varepsilon^\ast)^2)\,\|u-v\|^2_{L^2(\Omega)},
	\end{aligned}
	\end{align*}
i.e.,
\begin{equation} \label{A.3}
\|G_\varepsilon(u)-G_\varepsilon(v)\|_{L^2(\Omega)} \leq  \Big( C(\Omega)\,(K_\varepsilon+ (K_\varepsilon^\ast)^2) \Big)^{\frac{1}{2}} \|u-v\|_{L^2(\Omega)}.
\end{equation}
Thus, the map $G_\varepsilon:L^2(\Omega)\to L^2(\Omega)$ is globally Lipschitz.
	\\[12pt]
	Similarly, we take the $L^\infty$-norm in $x$ and use \eqref{A.2} to find 
	\begin{equation}\label{A.4}
		\|G_\varepsilon(u)-G_\varepsilon(v)\|_{L^\infty(\Omega)}
		\le 2K_\varepsilon^\ast\,\|u-v\|_{L^\infty(\Omega)},
	\end{equation}
	so $G_\varepsilon$ is globally Lipschitz on $L^\infty(\Omega)$ as well.
	\vspace{0.2cm}

\noindent$\diamond$ \textbf{Step B} $(-\delta A$ generates an analytic semigroup on $L^2(\Omega)$):~
	By \eqref{NewL11} and \eqref{NewL12}, the bilinear form $\mathcal{A}$ corresponding to the regional fractional Laplacian generates a contraction Markovian semigroup $\left( e^{-tA} \right)_{t \ge 0}$. Therefore $-\delta A$ also generates an contraction Markovian semigroup $\left( e^{-\delta tA}\right)_{t\ge0}$.
	\vspace{0.2cm}

\noindent$\diamond$ \textbf{Step C} (Existence and uniqueness by a fixed-point argument in $L^2(\Omega)$):~We define the mild solution map on $C([0,T);L^2(\Omega))$ by
	\begin{equation*}
		(\mathcal T\theta)(t):=e^{-\delta tA}\theta^{\mathrm{in}}
		+\kappa\int_0^t e^{-\delta (t-\tau)A}\,G_\varepsilon(\theta(\tau))\,\di \tau.
	\end{equation*}
	Using the contraction property $\|e^{-\delta tA}\|_{\mathcal L(L^2,L^2)}\le 1$ and
	the Lipschitz bound \eqref{A.3}, we obtain for $\theta,\varphi\in C([0,T);L^2(\Omega))$
	\begin{align*}
	\begin{aligned}
	& \|(\mathcal T\theta)(t)-(\mathcal T\varphi)(t)\|_{L^2(\Omega)}^2 \\
	& \hspace{0.5cm} \le \kappa^2 t\int_0^t \|G_\varepsilon(\theta(\tau))-G_\varepsilon(\varphi(\tau))\|_{L^2(\Omega)}^2\di \tau \le  \kappa^2 T L_\varepsilon^2 \int_0^t \|\theta(\tau)-\varphi(\tau)\|_{L^2(\Omega)}^2\,\di \tau,
	\end{aligned}
	\end{align*}
	where $L_\varepsilon^2:=C(\Omega)(K_\varepsilon + (K_\varepsilon^\ast)^2)$. \newline
	
	We take the supremum over $t\in[0,T]$ to find 
	\[
	\|\mathcal T\theta-\mathcal T\varphi\|_{C([0,T);L^2(\Omega))}^2
	\le (\kappa L_\varepsilon T)^2 \,\|\theta-\varphi\|_{C([0,T);L^2(\Omega))}^2.
	\]
	We choose $T>0$ such that 
	\[ \kappa L_\varepsilon T <1. \]
	Then, we obtain that $\mathcal T$ is a contraction on
	$C([0,T);L^2(\Omega))$, hence there exists a unique fixed point
	$\theta^{\varepsilon,\delta}\in C([0,T);L^2(\Omega))$, which is the unique mild solution \cite{Rankin1993}.
	By iterating the argument on consecutive subintervals (using global Lipschitz continuity of
	$G_\varepsilon$), the mild solution extends uniquely to any finite-time interval since $\mathcal{T}(\theta)$ is bounded on such intervals. That is 
	\begin{align*}
		\|(\mathcal T\theta)(t)\|_{L^2(\Omega)}^2&\le 2\|e^{-\delta tA}\theta^{\mathrm{in}}\|_{L^2(\Omega)}^2
		+2\kappa^2 t \int_0^t \|e^{-\delta (t-\tau)A}\,G_\varepsilon(\theta(\tau))\|_{L^2(\Omega)}^2\,\di \tau\\&\le 2\|\theta^{\mathrm{in}}\|_{L^2(\Omega)}^2+2 T^2 C(d,s,\Omega)\,\varepsilon^{-2s}.
	\end{align*}

	\noindent$\diamond$ \textbf{Step D} ($L^\infty$ continuity):~If $\theta^{\mathrm{in}}\in L^\infty(\Omega)$, we work in the Banach space $L^\infty(\Omega)$.
	The bound \eqref{A.4} shows that $G_\varepsilon$ is globally Lipschitz on $L^\infty$,
	and the semigroup $\{e^{-\delta tA}\}$ is bounded on $L^\infty$. That is 
	\[ \|e^{-\delta tA}\|_{\mathcal L(L^{\infty},L^{\infty})}\le 1 \]
	and 
	\begin{align*}
		\|(\mathcal T\theta)(t)\|_{L^{\infty} (\Omega)}&\le \|e^{-\delta tA}\theta^{\mathrm{in}}\|_{L^\infty (\Omega)}
		+\kappa\int_0^t \|e^{-\delta (t-\tau)A}\,G_\varepsilon(\theta(\tau))\|_{L^\infty (\Omega)}\,\di \tau\\
		&\le \|\theta^{\mathrm{in}}\|_{L^\infty (\Omega)}+T C(d,s,\Omega)\,\varepsilon^{-2s}.
	\end{align*} Hence the same fixed point argument yields a unique mild
	solution in $C([0,T);L^\infty(\Omega))$. Uniqueness implies that this $L^\infty$-mild solution
	coincides with the $L^2$-mild solution constructed above whenever
	$\theta^{\mathrm{in}}\in L^2\cap L^\infty$.
\end{proof}

\begin{remark}
Since mild solutions are also weak solutions, we obtain uniqueness and existence of weak solutions in $C([0,T); L^2(\Omega))$ or in $C([0,T); L^\infty(\Omega))$, depending on the regularity of initial data. Using the estimates \eqref{C-15-1} and \eqref{C-16}, we also see that the solution is in $L^\infty( [0,T); H^s(\Omega))$.
\end{remark} 
\begin{proposition} \label{PA.2}
Fix $T>0$ and $\delta > 0$, let $\Omega\subset\mathbb R^d$ be a measurable set with $0<|\Omega|<\infty$. We assume that $\{\theta^{\varepsilon,\delta}\}_{\varepsilon>0}\subset C([0,T);L^2(\Omega))$ and $\theta^\delta\in C([0,T);L^2(\Omega))$ satisfy the following conditions:
\begin{enumerate}
\item
There exists a sequence $(\varepsilon_j)$ tending to $0$ as $j \to \infty$ such that 
\[ \theta^{\varepsilon_j,\delta}\to\theta^\delta \quad \text{strongly in } C([0,T);L^2(\Omega)), \quad j \to \infty. \]
\item		
There exist constants $M_\delta,m_\delta\in\mathbb R$ such that for all $j$ and all $t\in[0,T)$,
		\[
		\operatorname*{ess\,sup}_{x\in \Omega} \theta^{\varepsilon_j,\delta}(t,x)\le M_\delta,
		\quad
		\operatorname*{ess\,inf}_{x\in \Omega} \theta^{\varepsilon_j,\delta}(t,x)\ge m_\delta.
		\]
\end{enumerate}
Then, for all $t\in[0,T]$, one has 
		\[
		\operatorname*{ess\,sup}_{x\in \Omega} \theta^\delta(t,x)\le M_\delta,
		\quad
		\operatorname*{ess\,inf}_{x\in \Omega} \theta^\delta(t,x)\ge m_\delta.
		\]
\end{proposition}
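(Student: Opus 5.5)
The plan is to show that the pointwise-in-time bounds pass to the strong $L^2$ limit, using the fact that the sets
\[
K_\delta:=\{u\in L^2(\Omega):\ m_\delta\le u\le M_\delta \ \text{a.e. in } \Omega\}
\]
are closed in $L^2(\Omega)$. I will argue for each fixed time $t$ separately.

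\textbf{Step 1: convergence at a fixed time.} Fix $t\in[0,T)$. Convergence in $C([0,T);L^2(\Omega))$ means uniform convergence in time with values in $L^2(\Omega)$. In particular,
\[
\|\theta^{\varepsilon_j,\delta}(t)-\theta^\delta(t)\|_{L^2(\Omega)}\to0 .
\]
By the Riesz--Fischer theorem, a further subsequence (which may depend on $t$) converges pointwise a.e. in $\Omega$.

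\textbf{Step 2: passing the bounds to the limit.} By hypothesis (2), for each $j$ there is a null set $N_j\subset\Omega$ with
\[
m_\delta\le \theta^{\varepsilon_j,\delta}(t,x)\le M_\delta \quad \text{for } x\notin N_j .
\]
Let $N_0$ be the exceptional set of the a.e. convergence. The union $N:=N_0\cup\bigcup_j N_j$ is still a null set. For $x\notin N$, passing to the limit in the inequalities gives
\[
m_\delta\le\theta^\delta(t,x)\le M_\delta .
\]
Hence $\operatorname*{ess\,sup}\theta^\delta(t)\le M_\delta$ and $\operatorname*{ess\,inf}\theta^\delta(t)\ge m_\delta$.

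There is an alternative route that avoids pointwise limits. Write $(\theta^\delta(t)-M_\delta)_+$ and note that $r\mapsto(r-M_\delta)_+$ is $1$-Lipschitz. Then
\[
\|(\theta^\delta(t)-M_\delta)_+\|_{L^2}\le \|(\theta^{\varepsilon_j,\delta}(t)-M_\delta)_+\|_{L^2}+\|\theta^{\varepsilon_j,\delta}(t)-\theta^\delta(t)\|_{L^2}=0+o(1),
\]
so $(\theta^\delta(t)-M_\delta)_+=0$ a.e. The lower bound follows in the same way.

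\textbf{Step 3: the endpoint $t=T$.} The statement asks for $t\in[0,T]$, while the convergence is only on $[0,T)$. I expect this to be the only delicate point, and it is minor. The first thing I would try is to observe that the hypotheses hold for every finite horizon, so the estimate can be applied on $[0,T']$ with $T'>T$. Alternatively, one can use the continuity of $\theta^\delta$ into $L^2$ together with the closedness of $K_\delta$ to reach $t=T$ as a limit of times $t_n\uparrow T$.

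Note that the finiteness of $|\Omega|$ plays no essential role here, beyond ensuring that constants belong to $L^2$ in the alternative argument. The resulting diameter bound $\mathcal D[\theta^\delta(t)]\le M_\delta-m_\delta$ then follows immediately.
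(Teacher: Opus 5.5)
Your argument is correct and is essentially the paper's. The paper defers to the proof of Proposition~\ref{PA.3}, which fixes $t$ and argues by contradiction: if $A_\eta=\{\theta^\delta(t)>M_\delta+\eta\}$ had positive measure, then $\|\theta^\delta(t)-\theta^{\varepsilon_j,\delta}(t)\|_{L^2(\Omega)}^2\ge\eta^2|A_\eta|$ for all $j$. That is just the contrapositive of your a.e. estimate $(\theta^\delta(t)-M_\delta)_+\le|\theta^\delta(t)-\theta^{\varepsilon_j,\delta}(t)|$.

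On the endpoint, the paper's proof also only genuinely covers $t\in[0,T)$, since it uses $L^2$ convergence at the fixed time $t$. You are right that $t=T$ can only be reached by using the arbitrariness of $T$ in the surrounding construction. Your second alternative would need left-continuity of $\theta^\delta$ at $T$, which membership in $C([0,T);L^2(\Omega))$ does not give.
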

\begin{proof} Since the proof will be the same as the proof of Proposition \ref{PA.3}, we omit it.
\end{proof}

\begin{proposition} \label{PA.3}
	Fix $T>0$ and let $\Omega\subset\mathbb R^d$ be a measurable set with $0<|\Omega|<\infty$. We assume that $\{\theta^{\delta}\} \subset C([0,T);L^2(\Omega))$ and 
		$\theta\in C([0,T);L^2(\Omega))$ satisfy the following relations:
\begin{enumerate}
\item		
There exists a sequence $(\delta_j)$ tending to $0$ as $j \to \infty$ such that 
		\[
		\theta^{\delta_j}\to \theta \quad \text{strongly in } C([0,T);L^2(\Omega))
		\quad \quad j \to \infty.
		\]
\item		
		There exist constants $M,m\in\mathbb R$ such that for all $j$ and all $t\in[0,T)$,
		\[
		\operatorname*{ess\,sup}_{x\in \Omega}\theta^{\delta_j}(t,x)\le M,
		\quad
		\operatorname*{ess\,inf}_{x\in \Omega}\theta^{\delta_j}(t,x)\ge m.
		\]
\end{enumerate}		
		Then, the following assertions hold.
\begin{enumerate}
\item		
For all $t\in[0,T)$, one has 
\[ \operatorname*{ess\,sup}_{x\in \Omega}\theta(t,x)\le M, \quad \operatorname*{ess\,inf}_{x\in \Omega}\theta(t,x)\ge m, \quad {\mathcal D}[\theta(t)] \le M - m.\]
\item
If $M-m<\pi$, then
	\[
	{\mathcal D}[\theta(t)] <\pi \quad \text{for all } t\in[0,T).
\]
\end{enumerate}
\end{proposition}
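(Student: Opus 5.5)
The plan is to transfer the pointwise-in-time essential bounds through $L^2$ convergence, using the fact that the sets $\{u\in L^2(\Omega): u\le M \text{ a.e.}\}$ and $\{u\in L^2(\Omega): u\ge m \text{ a.e.}\}$ are closed in $L^2(\Omega)$.

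Fix $t\in[0,T)$. By assumption (1), $\sup_{\tau\in[0,T)}\|\theta^{\delta_j}(\tau)-\theta(\tau)\|_{L^2(\Omega)}\to0$, so in particular $\theta^{\delta_j}(t)\to\theta(t)$ in $L^2(\Omega)$. Pass to a subsequence, which may depend on $t$; this is harmless since $t$ is fixed. Along it, $\theta^{\delta_{j_k}}(t,x)\to\theta(t,x)$ for a.e. $x\in\Omega$. By assumption (2), for each $k$ there is a null set $N_k$ with $m\le\theta^{\delta_{j_k}}(t,x)\le M$ for $x\notin N_k$. Taking the countable union of the $N_k$ with the null set where pointwise convergence fails, we get $m\le\theta(t,x)\le M$ off a null set. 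Hence $\operatorname*{ess\,sup}\theta(t)\le M$ and $\operatorname*{ess\,inf}\theta(t)\ge m$.

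An alternative that avoids subsequences is to test against $\mathbf 1_{\{\theta(t)>M\}}$. One has $\int(\theta(t)-M)_+\le\int|\theta(t)-\theta^{\delta_j}(t)|\to0$, because $(\cdot-M)_+$ is $1$-Lipschitz and $(\theta^{\delta_j}(t)-M)_+=0$ a.e. The inequality $\|\cdot\|_{L^1}\le|\Omega|^{1/2}\|\cdot\|_{L^2}$ then uses only $|\Omega|<\infty$. I would probably present this version.

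The diameter bound follows directly: ${\mathcal D}[\theta(t)]=\operatorname*{ess\,sup}\theta(t)-\operatorname*{ess\,inf}\theta(t)\le M-m$. Assertion (2) is then immediate, since if $M-m<\pi$ we get ${\mathcal D}[\theta(t)]\le M-m<\pi$ for every $t\in[0,T)$.

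There is no real obstacle. The only point needing care is that the statement must hold for every $t$, not just a.e. $t$. This is exactly why convergence in $C([0,T);L^2(\Omega))$, rather than merely in $L^2_{t,x}$, is used: it gives $L^2$ convergence at each fixed time. The argument applies verbatim to Proposition~\ref{PA.2} with $\varepsilon_j$ in place of $\delta_j$.
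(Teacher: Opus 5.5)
Your proposal is correct and is essentially the paper's argument. The paper fixes $t$, assumes $\operatorname*{ess\,sup}\theta(t)>M$, and on the superlevel set $A_\eta=\{\theta(t)>M+\eta\}$ derives $\|\theta(t)-\theta^{\delta_j}(t)\|_{L^2(\Omega)}^2\ge\eta^2|A_\eta|>0$, contradicting $L^2$ convergence at that fixed time. This is the contrapositive of your positive-part estimate; the lower bound is handled the same way, and the diameter bound and assertion (2) then follow as you describe. A minor remark: if you stay in $L^2$ with $\|(\theta(t)-M)_+\|_{L^2(\Omega)}\le\|\theta(t)-\theta^{\delta_j}(t)\|_{L^2(\Omega)}$, you do not need $|\Omega|<\infty$ at all.
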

\begin{proof}
The second assertion follows from the last estimate in the first assertion. Hence, we focus on the first assertion. We fix $t\in[0,T]$ and split the proof into two steps. \newline

\noindent$\bullet$ \textbf{Step A} (Upper bound):~Suppose the contrary holds, i.e.,
\[  \operatorname*{ess\,sup}_{x\in \Omega}\theta(t,x)>M. \]
	Then there exists $\eta>0$ such that the superlevel set
	\[
	A_\eta:=\{x\in\Omega:\ \theta(t,x)>M+\eta\}
	\]
	has positive measure:
	\[ |A_\eta|>0. \]
	By the uniform bound on $\theta^{\delta_j}$, we have 
	\[ \theta^{\delta_j}(t,x)\le M \quad \mbox{for a.e.\ $x\in\Omega$}, \]
	hence for a.e.\ $x\in A_\eta$,
	\[
	\theta(t,x)-\theta^{\delta_j}(t,x)\ge (M+\eta)-M=\eta.
	\]
	Therefore, for all $j$, it holds
	\[
	\|\theta(t)-\theta^{\delta_j}(t)\|_{L^2(\Omega)}^2
	\ge \int_{A_\eta}(\theta(t,x)-\theta^{\delta_j}(t,x))^2 \di x
	\ge \eta^2|A_\eta|>0,
	\]
	which is contradictory to
	\[ \|\theta(t)-\theta^{\delta_j}(t)\|_{L^2(\Omega)} \to 0, \]
	as $j\to\infty$, a consequence of the strong convergence in $C([0,T);L^2(\Omega))$. \newline
	
	\noindent$\bullet$ \textbf{Step B} (Lower bound):~Suppose the contrary holds, i.e., 
	\[ \operatorname*{ess\,inf}_{x\in \Omega}\theta(t,x)<m. \]
	Then there exists $\eta>0$ such that the sublevel set
	\[
	B_\eta:=\{x\in\Omega:\ \theta(t,x)<m-\eta\}
	\]
	has a positive measure. Since $\theta^{\delta_j}(t,x)\ge m$ a.e., we have 
	\[
	\theta^{\delta_j}(t,x)-\theta(t,x)\ge m-(m-\eta)=\eta, \quad \mbox{for a.e.\ $x\in B_\eta$}, 
	\]
	and thus
	\[
	\|\theta^{\delta_j}(t)-\theta(t)\|_{L^2(\Omega)}^2\ge \eta^2|B_\eta|>0,
	\]
	which is again contradictory to the strong $L^2$-convergence at time $t$. For the last estimate, we combine the results in {\bf Step A} and {\bf Step B} to see
	\[
	{\mathcal D}[\theta(t)] = \operatorname*{ess\,sup}_{x\in \Omega}\theta(t,x) -  \operatorname*{ess\,inf}_{x\in \Omega}\theta(t,x) \leq M - m.
	\]
	Since $t\in[0,T)$ was arbitrary, the conclusion holds for all $t$.
\end{proof}

\vspace{0.5cm}

\section{Proof of Lemma \ref{L3.5}}\label{App-B}
\setcounter{equation}{0}
Since the proof is very lengthy, we split the proof into five steps.\newline 

\noindent$\bullet$ \textbf{Step A} (Weak formulation and symmetrization):~Let $\varphi\in C_c^\infty([0,T)\times\Omega)$. We multiply \eqref{C-18} by $\varphi$ and use integration by parts in time to see that for every $\delta>0$,
\begin{align}
\begin{aligned} \label{C.1}
	&-\int_0^T\!\!\int_\Omega \theta^\delta\,\partial_t\varphi\,\di x\di t
	-\int_\Omega \theta^{\rm in}(x)\varphi(0,x)\,\di x
	+\delta\int_0^T \mathcal A(\theta^\delta(t),\varphi(t))\,\di t \\
	&\hspace{2cm} = \kappa\int_0^T\!\!\iint_{\Omega\times \Omega}
	\frac{\sin(\theta^\delta(t,y)-\theta^\delta(t,x))}{|x-y|^{d+2s}}\,
	\varphi(t,x)\,\di y\di x\di t, 
\end{aligned}
\end{align}
where $\mathcal{A}$ is defined as in \eqref{B-3}. Using the oddness of the sine function and exchanging $x$ and $y$, we may symmetrize the right-hand side as
\begin{align}
\begin{aligned} \label{C.2}
& \iint_{\Omega\times \Omega} \frac{\sin(\theta^\delta(y)-\theta^\delta(x))}{|x-y|^{d+2s}}\varphi(x)\,\di x\di y \\
& \hspace{2cm} =\frac12\iint_{\Omega\times \Omega}
	\frac{\sin(\theta^\delta(y)-\theta^\delta(x))}{|x-y|^{d+2s}}\big(\varphi(x)-\varphi(y)\big)\,\di x\di y.
\end{aligned}
\end{align}
Hence, we can combine \eqref{C.1} and \eqref{C.2} to find 
\begin{align}
	&-\int_0^T\!\!\int_\Omega \theta^\delta\,\partial_t\varphi\,\di x\di t
	-\int_\Omega \theta^{\rm in}(x)\varphi(0,x)\,\di x
	+\delta\int_0^T \mathcal A(\theta^\delta(t),\varphi(t))\,\di t \notag\\
	&\hspace{2cm} = \frac{\kappa}{2}\int_0^T\!\!\iint_{\Omega\times \Omega}
	\frac{\sin(\theta^\delta(t,y)-\theta^\delta(t,x))}{|x-y|^{d+2s}}\,
	\big(\varphi(t,x)-\varphi(t,y)\big)\,\di y\di x\di t. \label{C.3}
\end{align}
\noindent$\bullet$ \textbf{Step B} (Limit in time and initial terms):~By the strong convergence in \eqref{C-35}, we obtain
\[
\int_0^T\!\!\int_\Omega \theta^{\delta_j}\,\partial_t\varphi\,\di x\di t
\to
\int_0^T\!\!\int_\Omega \theta\,\partial_t\varphi\,\di x\di t,
\qquad
\int_\Omega \theta^{\delta_j}(0)\varphi(0)\,\di x
=
\int_\Omega \theta^{\rm in}\varphi(0)\,\di x.
\]
\noindent$\bullet$ \textbf{Step C} (The vanishing of the $\delta$--diffusion term):~Recall
\[
\mathcal A(u,\varphi)
:=\frac12\iint_{\Omega\times\Omega}
\frac{(u(x)-u(y))(\varphi(x)-\varphi(y))}{|x-y|^{d+2s}}\,\di x\di y.
\]
By the Cauchy-Schwarz inequality, it holds
\[
|\mathcal A(u,\varphi)|\le \frac{1}{2}\|u\|_{H^s(\Omega)}\|\varphi\|_{H^s(\Omega)}.
\]
Therefore, we have
\begin{align*}
	\left|\delta_j\int_0^T \mathcal A(\theta^{\delta_j}(t),\varphi(t))\,\di t\right|
	&\le \frac{\delta_j}{2}\|\theta^{\delta_j}\|_{L^2([0,T);H^s(\Omega))}\,\|\varphi\|_{L^2([0,T);H^s(\Omega))}\\
	&\le C\delta_j\|\varphi\|_{L^2([0,T);H^s(\Omega))}\xrightarrow[j\to\infty]{}0,
\end{align*}
where we used the uniform bound \eqref{C-33}. Hence, the contribution by diffusion disappears in the limit. \newline

\noindent$\bullet$ \textbf{Step D} (Limit in the singular Kuramoto term):~For a.e.\ $t\in(0,T)$, we set
\begin{equation} \label{C.3-1}
\mathcal K(u,\varphi)(t)
:=\iint_{\Omega\times \Omega}
\frac{\sin(u(t,y)-u(t,x))}{|x-y|^{d+2s}}\big(\varphi(t,x)-\varphi(t,y)\big)\,\di x\di y.
\end{equation}
We claim that
\begin{equation}\label{C.4}
	\int_0^T \mathcal K(\theta^{\delta_j},\varphi)(t)\,\di t
	\to
	\int_0^T \mathcal K(\theta,\varphi)(t)\,\di t.
\end{equation}
Fix $\rho\in(0,1)$, we decompose $\Omega\times\Omega$ into $\{|x-y|>\rho\}\cup\{|x-y|\le\rho\}$, writing accordingly
\[ \mathcal K = \mathcal K^{>\rho}+\mathcal K^{\le\rho}. \]
Next, we consider two cases. \newline

\noindent$\diamond$ \textbf{Case D.1}:~On $\{|x-y|>\rho\}$, we have
\[
\Big| \frac{\sin(u(t,y)-u(t,x))}{|x-y|^{d+2s}} \Big| \leq \rho^{-d-2s}.
\]
Thus, the integrand in \eqref{C.3-1} is dominated by an $L^1$-function.
Moreover, it follows from \eqref{C-35} that there exists a subsequence $\delta_j \to 0$ such that 
\[ \theta^{\delta_j}(t,x)\to\theta(t,x) \quad \mbox{a.e.\ in $(0,T)\times\Omega$}. \]
Hence, we have
\[
\sin(\theta^{\delta_j}(t,y)-\theta^{\delta_j}(t,x))\to
\sin(\theta(t,y)-\theta(t,x))\quad\text{a.e.\ in }(0,T)\times\Omega\times\Omega.
\]
Thus, by dominated convergence theorem (for fixed $\rho$), it holds
\begin{equation}\label{C.5}
	\int_0^T \mathcal K^{>\rho}(\theta^{\delta_j},\varphi)(t)\,\di t
	\to
	\int_0^T \mathcal K^{>\rho}(\theta,\varphi)(t)\,\di t.
\end{equation}

\smallskip
\noindent$\diamond$ \textbf{Case D.2.} ~On $\{|x-y| \leq \rho\}$, we use
\[ |\sin a|\le |a| \quad \mbox{and} \quad |\varphi(t,x)-\varphi(t,y)|\le \|\nabla\varphi\|_{L^\infty((0,T)\times\Omega)}|x-y| \]
to estimate
\begin{align*}
\begin{aligned}
	&\big|\mathcal K^{\le\rho}(\theta^{\delta},\varphi)(t)\big| \le \|\nabla\varphi\|_{L^\infty((0,T)\times\Omega)}\int_{\{|x-y|\le\rho\}} \frac{|\theta^\delta(t,y)-\theta^\delta(t,x)|}{|x-y|^{d+2s-1}}\,\di x\di y
	\\
	& \hspace{1cm} \le \|\nabla\varphi\|_{L^\infty((0,T)\times\Omega)} \| \theta^\delta (t)\|_{\dot H^s (\Omega)}
	\left(\iint_{\{|x-y|\le\rho\}}\frac{1}{|x-y|^{d+2s-2}}\,\di x\di y\right)^{1/2}.
\end{aligned}
\end{align*}
By \eqref{C-30}, the Gagliardo seminorm of $\theta^{\delta}(t)$ can be bounded by a constant independent of $\delta$. For the last term, using $z=x-y$ and polar coordinates, we obtain
\[
\iint_{\{|x-y|\le\rho\}}\frac{1}{|x-y|^{d+2s-2}}\,\di x\di y
\le C_\Omega\int_{|z|\le\rho}\frac{1}{|z|^{d+2s-2}}\,dz
= C_{\Omega,d}\int_0^\rho r^{1-2s}\,dr
= C_{\Omega,d} \,\rho^{2-2s},
\]
which tends to $0$ as $\rho\to0$ since $s\in(0,1)$.
Integrating in time and using the uniform bound \eqref{C-33}, we obtain the uniform smallness estimate
\begin{equation}\label{C.6}
	\sup_{j\in\mathbb N}\int_0^T \big|\mathcal K^{\le\rho}(\theta^{\delta_j},\varphi)(t)\big|\,\di t
	\le C\,\rho^{1-s}\qquad\text{for all }\rho\in(0,1).
\end{equation}
The same estimate holds with $\theta$ in place of $\theta^{\delta_j}$. Let $j\to\infty$ in \eqref{C.5}, and then let $\rho\to0$ using \eqref{C.6}.
This yields \eqref{C.4}.

\medskip
\noindent$\bullet$ \textbf{Step E} (Limiting weak formulation):~Passing to the limit $j\to\infty$ in \eqref{C.3} using \textbf{Step~A}--\textbf{Step D}, we conclude that for every
$\varphi\in C_c^\infty([0,T)\times\Omega)$,
\begin{align*}
	&-\int_0^T\!\!\int_\Omega \theta\,\partial_t\varphi\,\di x\di t
	-\int_\Omega \theta^{\rm in}(x)\varphi(0,x)\,\di x\notag\\
	&\hspace{1cm} = \frac{\kappa}{2}\int_0^T\!\!\iint_{\Omega\times \Omega}
	\frac{\sin(\theta(t,y)-\theta(t,x))}{|x-y|^{d+2s}}\,
	\big(\varphi(t,x)-\varphi(t,y)\big)\,\di y\di x\di t.
\end{align*}
This is exactly the weak formulation of \eqref{A-2}.

\vspace{0.5cm}

%
%
%
%
%
%
\bibliographystyle{amsplain}

\end{document}